\DeclareMathSizes{\@xpt}{\@xpt}{6}{4.5}
\DeclareMathAlphabet{\matheulerfrak}{U}{euf}{t}{n}
\DeclareMathSymbol{\mh}{\mathord}{operators}{`\-}
\newcommand{\bH}{\mbf{H}}
\newsavebox{\deltabox}
\newcommand{\fqs}{flat quasi-smooth }
\newcommand{\qs}{quasi-smooth }
\newcommand{\bL}{\mbf{L}}
\newcommand{\cL}{\mathcal{L}}
\newcommand{\DDelta}{\mathbb{\Delta}}
\newcommand{\wh}{\widehat}
\newcommand{\mbf}{\mathbf}
\newcommand{\Id}{\operatorname{Id}}
\newcommand{\XX}{{\mathfrak X}}
\renewcommand{\SS}{{\mathfrak S}}
\newcommand{\qQ}{\mathcal{Q}}
\newcommand{\YY}{{\mathfrak Y}}
\newcommand{\wtil}{\widetilde}
\renewcommand{\L}{{\mathscr L}}
\newcommand{\VV}{{\mathfrak V}}
\newcommand{\nN}{\mathcal{N}}
\newcommand{\Ss}{{\mathcal S}}
\newcommand{\ii}{\mbf{i}}
\newcommand{\aaa}{{\mathbb A}}
\newcommand{\nn}{{\mathbb N}}
\newcommand{\bs}{\boldsymbol}
\newcommand{\sD}{\mbf{s}D}
\newcommand{\tMC}{\textnormal{MC}}
\renewcommand{\O}{{\mathscr O}}
\newcommand{\vV}{{\mathscr V}}
\newcommand{\gG}{{\mathcal G}}
\newcommand{\g}{\mathfrak{g}}
\newcommand{\uU}{{\mathscr U}}
\newcommand{\D}{\nabla}
\newcommand{\resto}{{\,|\,}}
\newcommand{\ups}{\upsilon}
\renewcommand{\Im}{\mathop{\rm Im}}
\newcommand{\rank}{\mathop{\rm rank}\nolimits}
\newcommand{\red}{{\mathop{\rm red}\nolimits}}
\newcommand{\dR}{\textnormal{dR}}
\newcommand{\ob}{\mathop{\rm ob}}
\newcommand{\MC}{\mathop{\mathcal{MC}}\nolimits}
\newcommand{\eff}{\mathrm{eff} }
\newcommand{\dVol}{\textnormal{dVol}}
\newcommand{\ul}{\underline}
\newcommand{\0}{ {\mathbf{ 0 }} }
\newcommand{\longiso}{\stackrel{\textstyle\sim}{\longrightarrow}}
\newcommand{\iso}{\stackrel{\sim}{\rightarrow}}
\newcommand{\doublearrowstack}[2]%
{{{{\scriptstyle#1}\atop{\textstyle\longrightarrow}}\atop{{\textstyle\longrightarrow}\atop{\scriptstyle#2}}}}
\newcommand{\rightleftarrowstack}[2]%
{{{{\scriptstyle#1}\atop{\textstyle\longrightarrow}}\atop{{\textstyle\longleftarrow}\atop{\scriptstyle#2}}}}
\newcommand{\leftrightarrowstack}[2]%
{{{{\scriptstyle#1}\atop{\textstyle\longleftarrow}}\atop{{\textstyle\longrightarrow}\atop{\scriptstyle#2}}}}
\newtheorem{thm}{Theorem}[section]
\newtheorem{cor}[thm]{Corollary}
\newtheorem{lem}[thm]{Lemma}
\newtheorem{prop}[thm]{Proposition}
\newtheorem*{theorem*}{Theorem.}
\newtheorem*{proposition*}{Proposition.}
\newtheorem*{definition*}{Definition.}
\newtheorem{defn}[thm]{Definition}
\newtheorem{cons}[thm]{Construction}
\newtheorem{rmk}[thm]{Remark}
\newtheorem{ex}[thm]{Example}
\newtheorem{notation}[thm]{Notation}
\newenvironment{proof}{\begin{trivlist}\item[]{\sc Proof.}}%
{\nolinebreak\hfill $\Box$ \end{trivlist}}
\newcommand{\calS}{\mathcal{S}}
\newcommand{\calD}{\mathcal{D}}
\newcommand{\Cpx}{\mathbb C}
\newcommand{\CC}{{\Cpx}}
\newcommand{\ZZ}{\mathbb Z}
\newcommand{\ep}{\epsilon}
\DeclareMathOperator\id{id}
\newcommand{\im}{\mathop{\rm im}\nolimits}
\newcommand{\rk}{\mathop{\rm rk}\nolimits}
\newcommand{\spec}{\mathop{\rm Spec}\nolimits}
\newcommand{\Sym}{\mathop{\rm Sym}\nolimits}
\newcommand{\contract}{{\,\lrcorner\,}}
\newcommand{\opname}{\operatorname}
\newcommand{\Crit}{\opname{Crit}}
\newcommand{\dCrit}{\opname{dCrit}}
\newcommand{\ed}{\opname{emb} } 
\newcommand{\noprint}[1]{}
\newcommand{\CE}{\textnormal{CE}}
\newcommand{\git}{\!\sslash\!}
\newcommand{\mdl}{ \mathbin{\leadsto}}
\author{Elliot Cheung}
\date{}
\title{{\bf \LARGE Quantum BV  $\cL_{\infty}$-algebras~I:}  \\
	{\bf \Large  {Derived geometric foundations}} }
\begin{document}
	\sloppy
	
	\maketitle
	\begin{abstract}
		We introduce the concept of a {\em quantum BV $\cL_{\infty}$-algebra} and study fundamental properties. In particular, we investigate homotopy Lie theoretic structures that naturally arise in the context of Chern-Simons theory. Of note, are the notions of \emph{homotopy BV data} and of a \emph{BV orientation}. The sequel of this paper will involve the direct application of these constructions to the setting of Chern-Simons theory.
	\end{abstract}
	\tableofcontents	
	
	\section*{Introduction}
	
	This is the first paper in a series of papers outlining the idea of a certain \emph{homotopical discretization} of Chern-Simons theory. \\
	
	A ubiquitous theme that often arises in moduli theoretic contexts is that local dg geometry is always governed by $L_{\infty}$-algebras. It is often the case that a description of derived deformation theory begins with quoting a certain principle: \\
	
	\begin{minipage}{30em}
		``Over a field of characteristic $0$, the formal neighbourhood of a moduli space or stack, is governed by a homotopy Lie algebra" (see: \cite{calaque2019formalmoduliproblemsformal}, \cite{Manetti2002})
	\end{minipage}
	\bigskip
	
	\noindent In brief, the idea is to somehow replace a singular space $X$ with the data of a resolution $\XX$ of $X$ by smooth spaces. The resolution, or the differential graded manifold, at a point $x \in X$ has a differential graded tangent space $T_{x} ^{\bullet} \XX$ which is a chain complex. In other words -- for a classical geometric space, the tangent space at a point is a vector space; for a differential graded space, the (dg) tangent space at a point is a chain complex. However, even more is true: the differential graded tangent space $T_{x}^{\bullet}\XX$ can be endowed with a natural homotopy Lie algebra structure, making it into an $L_{\infty}$-algebra that we will denote as $L^{\bullet}_{x}$. The base point $\0_{x} \in T_{x}X = L^{1}_{x}$  along with this $L_{\infty}$-algebra $L^{\bullet}_{x}$ describes the formal germ of the the dg analytic space $\XX$ at $x$. More specifically, every $L_{\infty}$-algebra encodes a certain formal analytic function, known as the \emph{Maurer-Cartan} function $F_{\tMC}$. In this present context, the equation defined by $F_{\tMC} = 0$ gives a derived resolution of a formal neighbourhood of $X$ at $x$, equivalent to the derived resolution provided by $\XX$. \\
	
	When an $L_{\infty}$-algebra $L^{\bullet}$ satisfies a certain boundedness condition on its structure operations (i.e. the brackets and higher brackets), its Maurer-Cartan function is guaranteed to converge in a polydisc of non-zero polyradius of convergence around the origin in $L^{1}$. In this case, the $L_{\infty}$-algebra $L^{\bullet}$ does not only describe a formal germ of a dg analytic space, but encodes a (convergent) germ of a marked basepoint in a dg analytic space. That is, such $L_{\infty}$-algebras describe (non-formal) local neighbourhoods of dg analytic spaces at marked basepoints. In this paper, we will work with such $L_{\infty}$-algebras and refer to them as \emph{analytic $L_{\infty}$-algebras}. A local analytic $L_{\infty}$-algebra is defined by fixing an open domain of convergence $U \subset L^{1}$, which is not necessarily marked, of an analytic $L_{\infty}$-algebra. Morphisms between local analytic $L_{\infty}$-algebras must be analytic on convergence domains. We refer to the category of local analytic $L_{\infty}$ algebras, as the category of $\cL_{\infty}$-algebras.  \\
	
	The author's primary interest in this structure comes out of an overall project involving the understanding of a ``homotopical discretization" of Chern-Simons theory. It is well-known that one can represent flat connections on a trivial bundle over a manifold $M$ as solutions to an appropriate Maurer-Cartan equation associated to an infinite dimensional differential graded Lie algebra $L_{\Omega} = \big( \Omega^{\bullet}(M,\g),d, [,] \big) $.  In this context, we may consider $L_{\Omega}$ to be an $\cL_{\infty}$-algebra, which is  (local) analytic in a infinte dimensional Fr\'echet sense. A discretization of $L_{\Omega}$ is defined by the data of an analytic $L_{\infty}$-homotopy retract context: 
\begin{equation} \label{homotopytransferbasic} 
L  \rightleftarrowstack{\mbf{I}}{\mbf{P} }    L_{\Omega} \circlearrowleft{ \bs{\eta}_{\Omega}  }
\end{equation}
	
	such that 
	\begin{enumerate}
		\item{$\mbf{I}$, $\mbf{P}$ are $L_{\infty}$-morphisms, such that $\mbf{P} \circ \mbf{I} = \Id_{L}$ and $\bs{\eta}_{\Omega}$ is an $L_{\infty}$-homotopy between $\Id_{L_{\Omega}}$ and $\mbf{I} \circ \mbf{P}$ (in particular, $\mbf{I}$ and $\mbf{P}$ are $L_{\infty}$ weak equivalences). } 
		
		\item{ $(L,U)$ is a finite dimensional $\cL_{\infty}$-algebra, all the maps $\mbf{I}$, $\mbf{P}$ and $\bs{\eta}_{\Omega}$ are required to converge on $U$ and $U_{\Omega}$, where $\mbf{P}(U_{\Omega}) = U \subset L^{1}$ }
	\end{enumerate}
	
In the above, we refer to $(L_{\Omega}, U_{\Omega})$ as a \emph{dgla-model} for the $\cL_{\infty}$-algebra $(L,U)$. In general, we refer to an $\cL_{\infty}$-algebra $(L,U)$ with a fixed dgla-model $(L_{\Omega},U_{\Omega})$ (i.e. a dgla $L_{\Omega}$ with a homotopy retract context as in \ref{homotopytransferbasic}), as an $\cL_{\infty}^{\Omega}$-algebra. The author has found this to be a useful definition, as the dgla-model $L_{\Omega}$ provides one with distinguished representations of homotopical structures one may define for an $\cL_{\infty}$-algebra $(L,U)$. In the forthcoming example of Chern-Simons theory, we will see that examples of this include: representations of ``large gauge equivalences", $(-1)$-shifted symplectic structure and BV formalism data defined from the Chern-Simons action functional and its companion $1$-form. One may think of the dgla-model of an $\cL_{\infty}$-algebra $(L,U)$ as a way to define a \emph{homotopy gauge action} on $(L,U)$.
 
\section{Dg-geometry and quasi-smooth spaces}

Suppose that $U$ is an open domain in some $\CC^{n}$, and $I \hookrightarrow \O_{D}$ is a coherent ideal sheaf of analytic functions on $D$. Then, we let $X = Z( I )$ be the vanishing locus of $I$. 

\begin{defn}
	Where $X \subset U$ is endowed with the subspace topology, we define $( X , \O_{X})$ to be the ringed space defined by the sheaf $\O_{U}/I \resto_{X}$. An \emph{analytic model space} is a ringed space of this form.
\end{defn}

\begin{defn}
	We say that $( X , \O_{X} )$ is an analytic space, if  $( X , \O_{X} )$ is a $\CC$-ringed space, such that for any $p \in X$ there exists an open neighbourhood $U$ containing $p$ such that $( U , \O_{X} \resto_{U} )$ is isomorphic to an analytic model space, as $\CC$-ringed spaces. 
\end{defn}

Of course, an analytic model space is akin to an affine variety in the language of algebraic geometry, and an analytic space like a scheme in that it is locally presented by ``affines". However, in the realm of analytic geometry, one considers an analytic space to be ``truly affine" by analogy, if it is a \emph{Stein space}. We can restrict our analytic model spaces to be modelled over polydiscs, and these will be Stein spaces. For a good overview on analytic spaces, see \cite{HansGrauert1984} and \cite{Greuel2006}.\\

Clearly, an analytic space can be very singular. If $X$ is smooth, then as a complex manifold, $X$ is always locally isomorphic to a polydisc in some $\CC^{n}$. In particular, near a point $p \in X$, we can identify an open neighbourhood $U_{p} \subset X$  containing $p$ with open neighbourhood inside of the tangent space $T_{p}X$. That is, deformations of $X$ near $p$ are ``unobstructed".  One perspective is that a singular space has a more complicated deformation theory, where the tangent sheaf of a singular space is no longer locally free and its fibers may vary in dimension at different points. \\

One starting point into the realm of derived geometry is understanding how one might ``resolve" singular analytic spaces -- and in particular, how can one make sense of ``resolving" a tangent sheaf that is not locally free. A more approachable model to understanding how this may look is provided by a class of dg analytic spaces given by \qs spaces. Roughly speaking, these are differential graded analytic spaces whose classical truncations are locally presented as a vanishing locus of some analytic equations in an ambient smooth complex analytic space (i.e. a complex manifold) -- such that the data of the defining equations globally assemble as a section of a vector bundle over a finite dimensional smooth analytic space $M$.

\begin{defn} \label{qsspace}
	A quasi-smooth analytic space (or simply, just a ``\qs space"), $\XX = (M,E, \lambda)$ is the data of: 
	\begin{itemize}
		\item{ A \emph{finite dimensional}, analytic manifold $M$}
		\item{A vector bundle $E$ over $M$}
		\item{An analytic section $\lambda: M \rightarrow E$ of the bundle $E$ }
	\end{itemize}
	The classical locus, or classical truncation, of a quasi-smooth analytic space $\XX$ is the analytic space $Z(\lambda) \hookrightarrow M$, the vanishing locus of the section $\lambda$. We will denote this as $\tau^{0}(\XX)$, or $X$. 
\end{defn}

A quasi-smooth analytic space as defined above has a differential graded structure sheaf. This is a sheaf of cdgas over $M$ that can be considered as a homological resolution of the analytic structure sheaf $\O_{X}$. Before we state the definition, note that the section $\lambda$ of the bundle $E$ over $M$ defines a morphism of sheaves (on $M$), $q_{\lambda} := \lambda^{*}: E^{\vee} \rightarrow \O_{M}$. Then, the structure sheaf of $\O_{\XX}$ is defined to be the associated ``sheaffy" Kozsul complex associated to the morphism of $\O_{M}$ modules given by $q_{\lambda}$:

\begin{defn}
	The structure sheaf of the analytic \qs space $\XX = (M,E, \lambda)$ is defined to be the sheaf of cdgas, over $M$:
	
	\[
	\O^{\bullet}_{\XX} := \bigg(   \cdots \rightarrow \bigwedge^{k} E^{\vee} \rightarrow \cdots \rightarrow E^{\vee} \xrightarrow{q_{\lambda}} \O_{M} \bigg)
	\]
\end{defn}
Note that $H^{0}( \O_{\XX}^{\bullet} ) = \O_{X}$, the analytic structure sheaf of $X = \tau^{0}(\XX)$. Indeed, $\tau^{0}(\XX)$ is defined to the vanishing locus $Z(\lambda)$. This analytic subscheme of $M$ is defined via the sheaf of ideals $I_{X,M} := \im( E^{\vee} \xrightarrow{q_{\lambda}} \O_{M} ) \hookrightarrow \O_{M}$

\begin{defn} 
	A morphism $\Phi = (\phi, \phi^{\#})$ between analytic \qs spaces $\XX = (M,E,\lambda)$ and $\YY = (N,F, \zeta)$ is defined by:
	
	\begin{itemize}
		\item{ an analytic map $\phi: M \rightarrow N$ }
		\item{and a morphism of bundles, $\phi^{\#}: E \rightarrow F$, such that $\phi^{*} \zeta = \phi^{\#} \circ\lambda$.}
	\end{itemize}
	If $\Phi$ is a morphism between analytic \qs spaces $\XX$ and $\YY$ as above, such that $\phi: M \iso N$ is a biholomorphism, then we say that $\Phi$ is a \emph{dg biholomorphism}. Clearly, a dg biholomorphism is an isomorphism of analytic \qs spaces. \\

	The compatibility condition for $\phi^{\#}$ with the sections $\lambda$ and $\zeta$ make it so that $\phi |_{ \tau^{0}(\XX) } : \tau^{0}(\XX) \rightarrow \tau^{0}(\YY)$ -- that is, the map $\phi$ between the ambient manifolds (or ``bodies") restricts to a map between corresponding classical loci.
	
\end{defn}

The compatibility condition for $\phi^{\#}$ means that the process of taking a classical truncation via $\tau^{0}$ is functorial. This gives us a functor from the category of analytic \qs spaces to analytic spaces.  \\

Sometimes we may use the same notation $\phi$ to refer to both a morphism $\Phi$ of analytic \qs spaces and the associated map between the respective bodies. Ideally, this will only be used when there is no room for confusion. \\

A morphism $\Phi: \XX \rightarrow \YY$ between analytic \qs spaces induces a morphism of dg structure sheaves $\Phi^{*}: \O_{\YY} \rightarrow \phi_{*} \O_{\XX}$ defined in the following way:

\begin{defn}
	Suppose that $\XX = (M,E,\lambda)$ and $\YY = (N,F, \zeta)$ are analytic \qs spaces. Then, given a morphism $\Phi: \XX \rightarrow \YY$ between analytic \qs spaces, we define the induced morphism between dg structure sheaves to be the morphism of dg sheaves $\Phi^{*}: \O_{\YY} \rightarrow \phi_{*} \O_{\XX}$ given by the following morphism of complexes (of sheaves over $N$):
	
	\[
	\begin{tikzcd}
		\cdots  & \bigwedge^{k} F^{\vee} \arrow[d] \arrow[r] & \cdots  \arrow[r]  \arrow[d] & F^{\vee}   \arrow[d, "(\phi^{\#})^{*}"] \arrow[r, "q_{\zeta}"] & \O_{N} \arrow[d, "\phi^{*}"]  \\
		\cdots  & \bigwedge^{k} \phi_{*}E^{\vee}  \arrow[r] & \cdots \arrow[r] & \phi_{*}E^{\vee} \arrow[r, "\phi_{*}q_{\lambda}" swap] & \phi_{*}\O_{M}  \\
	\end{tikzcd}
	\]
	
	Note that this is indeed a morphism of (sheaves of) complexes, as the morphism of analytic \qs spaces $\Phi = (\phi, \phi^{\#})$ is required to be compatible with the sections $\lambda$ and $\zeta$ respectively. 
\end{defn}

\begin{rmk}
	By adjunction, the morphism of dg structure sheaves $\Phi^{*}: \O_{\YY} \rightarrow \phi_{*}\O_{\XX}$ can be equivalently described as a morphism of dg sheaves over $M$ defined as $\Phi^{\natural}: \phi^{-1}\O_{\YY} \rightarrow \O_{\XX}$.
\end{rmk}

One can think of an analytic \qs space as a $2$-step resolution of a possibly singular space $X$.  We enlarge the class of ``spaces" to include diagrams of smooth objects, $M$ and $E$ connected by the data of a map (a section) $\lambda: M \rightarrow E$.  This may perhaps be reminiscient of taking a resolution by way of a chain complex in homological algebra.  If $\tau^{0}\XX = X$, then we say that $X$ is the classical truncation of the quasi-smooth analytic space $\XX$, or that $\XX$ is a \emph{derived enhancement} of $X$. \\

Below are examples of how analytic \qs spaces may be viewed as a ``resolution" of a singular analytic spaces.

\begin{ex} 
	Let $X = Z( x_{1}^{3} - x_{1}x_{2} ) \subseteq \aaa_{k}^{2}$. Define the map $F(x,y): \aaa^{2} \rightarrow \aaa^{1}$ by the equation $F(x,y) =  x_{1}^{3} - x_{1}x_{2}  $. Then, we take $E = \aaa^{2} \times \aaa^{1}$ as the trivial $\aaa^{1}$ bundle over $\aaa^{2}$. Then, the map $F$ defines a section $\lambda(x,y) = (x,y) \times (F(x,y))$ of the bundle $E$ over $M = \aaa^{2}$.
\end{ex}

\begin{ex}
	Let consider the analytic map $F: \CC^{2} \rightarrow \CC^{2}$ defined by $F(x,y) = (x^{2} - y^{3}, xy)$. The Jacobian is given by
	
	\[
	J_{F} = 
	\begin{bmatrix}
		2x & -3y^{2} \\
		y & x 
	\end{bmatrix}
	\]
	
	Thus, one can see that $F$ is singular at the origin. Regardless, one can take the idea above in example 1.4 and define an analytic \qs space with the data $M = \CC^{2}$, E = $\CC^{2} \times \CC^{2}$, $\lambda(x,y) = (x,y ; x^2 - y^{3} , xy)$. 
\end{ex}

By including the data of a bundle in our notion of spaces, tangent spaces of dg-spaces become $2$-term complexes.

\begin{defn}
	Given a quasi-smooth dg-space $\XX = (M,E,\lambda)$, we define the dg-tangent space at $\mu \in \tau^{0}(\XX)$ to be:
	
	\[
	T^{\bullet}_{\mu} \XX = \big[ T_{\mu} M \xrightarrow{D_{\mu} \lambda} E_{\mu} \big]
	\]
\end{defn}

\begin{rmk}\hfill
	\begin{enumerate}
		\item{
			Note that in the above definition, we are considering the derivative $D_{\mu} \lambda$ of a section at the point $\mu$.  As $\mu \in Z(\lambda)$, there is a canonically defined derivative $D_{\mu} \lambda: T_{\mu}M \rightarrow  E_{\mu}$ that does not depend on a choice of a connection. }
		
		\item{With this, we can view $M$ as providing a space of ``ambient deformations" of $X$, and $E$ as providing an obstruction bundle. A tangent vector $v$ at $\mu \in M$ is a tangent vector of $\mu \in X$ if $v \in \ker(D_{\mu} \lambda)$.}
		
	\end{enumerate}
\end{rmk}

With the notion of dg-tangent spaces for quasi-smooth analytic spaces, we can define a notion of weak equivalence, or quasi-isomorphism between quasi-smooth analytic spaces. These are morphisms $\Phi: \XX \rightarrow \YY$ that are \'etale or local biholomorphisms in the dg sense. Roughly speaking, for every point $\mu \in \tau^{0}(\XX)$, the induced tangent map $T^{\bullet}_{\mu}\XX \rightarrow T^{\bullet}_{\phi(\mu)} \YY$ should be a quasi-isomorphism -- which is the relevant notion of (weak) equivalence between chain complexes.

\begin{defn}
	A morphism $\Phi = (\phi, \phi^{\#})$ between quasi-smooth dg-spaces $\XX = (M,E,\lambda)$ and $\YY = (N,F, \zeta)$ is considered a \emph{quasi-isomorphism} if:
	\begin{itemize}
		\item{ $\phi |_{\tau^{0}(\XX)} : \tau^{0}(\XX) \rightarrow \tau^{0}(\YY)$ is an isomorphism between classical truncations (as analytic spaces) }
		
		\item{ for any $\mu \in \tau^{0}(\XX)$, we have that the induced commutative square is a quasi-isomorphism of the rows:
			
			\begin{equation} \label{tangentmap}
				\begin{tikzcd}
					T_{\mu} M \arrow[r, "D_{\mu} \lambda"  ] \arrow[d, "D_{\mu} \phi"] & E |_{\mu} \arrow[d, "\phi^{\#}|_{\mu}"]  \\
					T_{\phi(\mu)} N \arrow[r, "D_{\phi(\mu)} \zeta" swap]  & F |_{\phi(\mu)} \\
				\end{tikzcd}
			\end{equation}
		}
		
	\end{itemize}
	Of course, the two rows in the above square are the dg tangent spaces $T_{\mu}^{\bullet}\XX$ and $T_{\phi(\mu)} \YY$ respectively. The induced square can be taken as the definition of the induced tangent map $D_{\mu} \phi : T_{\mu}^{\bullet} \XX \rightarrow T_{\phi(\mu)} \YY $.
\end{defn}

\paragraph{Restricting a \qs analytic space} 

\begin{defn}
	\item{ Suppose that $\XX = (M,E,\lambda)$ is a quasi-smooth analytic space.  If $U_{M} \hookrightarrow M$ is an open subset of $M$, then we can define the restriction $\XX |_{U_{M}} $ of $\XX$ to the open subset $U_{M}$ by defining:
		\[
		\XX |_{U_{M}} := ( U_{M}, E |_{U_{M}} , \lambda |_{U_{M}}  )
		\] }
\end{defn}
More generally, suppose that $\phi: N \rightarrow M$ is a morphism of analytic manifolds, and $\XX = (M,E,\lambda)$ is an analytic \qs space. Then the morphism $\phi$ induces via pullback, an analytic \qs structure we denote as $\phi^{*} \XX$, defined to be:
\[
\phi^{*}\XX = ( N , \phi^{*} E, \phi^{*} \lambda)
\]

\paragraph{ Immersions of analytic \qs spaces }

\begin{defn}
	
	Suppose that we have two analytic \qs spaces $\XX = (M,E, \lambda)$ and $\YY = (N,F,\zeta)$, and  a morphism of analytic \qs spaces $\iota: \YY \rightarrow \XX$. $\iota: \YY \hookrightarrow \XX$ is an immersion if:
	\begin{enumerate}
		\item{ $\iota: N \hookrightarrow M$ is an embedding of smooth analytic spaces }
		
		\item{ We have an isomorphism $\iota^{*}E \cong F$ }
		
	\end{enumerate}
	
	\indent We say that $\YY$ is \emph{open} in $\XX$ if $\iota(N) \subseteq M$ is an open subset, so that $\XX |_{\iota(N)} \cong \YY$
\end{defn}

For an open subset $U_{M} \subseteq M$, we clearly have an immersion of analytic \qs spaces $\XX|_{U_{M}} \hookrightarrow \XX$. \\

We may say that an analytic \qs space $\XX$ is \emph{pointed}, if we also specify a base point $\mu \in X  = \tau^{0}\XX$. 
In general, we can use (pointed) analytic \qs spaces to model analytic mapping germs by following the two examples above for any analytic mapping germ $F: (\CC^{n}, \0) \rightarrow (\CC^{m},\0)$.

\begin{defn}(Local analytic \qs spaces) \label{localdgspace}\\
	Let $V$ be a finite dimensional complex vector space. \\
	A local analytic \qs space is a \emph{pointed} analytic \qs space of the form $\uU = ( U, E_{U}, \lambda)$ with base point being the origin $\0 \in V$, with:
	\begin{itemize}
		\item{
			$U \subseteq V$ is a polydisc of the origin. }
		\item{$E_{U} $ is a trivial bundle over $U$ }
		\item{$\lambda: U \rightarrow E$ is an analytic map between $U$ and the vector space $E$ that vanishes at $\0$.}
	\end{itemize}
	
	Note that, where $E$ is a vector space, we use the notation $E_{U}$ to denote the trivial bundle over $U$ with fiber $E$. As $E_{U}$ is a trivial bundle over $U$, a section is equivalent to specifying an analytic mapping $\lambda: U \rightarrow E$. As $\uU = (U, E_{U}, \lambda)$ is completely determined by  $U \subset V$ and the  analytic mapping $\lambda: U \rightarrow E$, we will use the notation $\uU = ( U, \lambda: U \rightarrow E)$ to specify a local analytic \qs space.\\

	To emphasize the base point of a local analytic \qs space $\uU$, we may use the notation $(\uU,\0)$ when appropriate. For local analytic \qs spaces $(\uU,\0)$ and $(\vV,\0)$, a \emph{pointed} morphism of local analytic \qs spaces $\Phi: (\uU,\0) \rightarrow (\vV,\0)$ is a morphism of analytic \qs spaces that is compatible with the base points (i.e., $\phi(\0) = \0$).\\
	
\end{defn}

One can view a local analytic \qs space as a local model for analytic \qs spaces:  for any analytic \qs space, one can choose a point $\mu \in \tau^{0}\XX$ and restrict the bundle $E$ to a trivializing chart of $M$ around $\mu$ and produce a local analytic \qs space. We will expand on this more later, when we define the notion of a dg-chart.\\

A local analytic \qs space well has a defined notion of taking the derivative of the analytic map $\lambda: U \rightarrow E$, producing another analytic map $\cal{D} \lambda: U \rightarrow E$. Of course, for a general analytic \qs space $\XX$, we can globalize this if we equipped the bundle $E$ with a \emph{holomorphic} connection $\D$.

\begin{defn}(\fqs spaces)
	\begin{enumerate}
		\item{
			We say that an analytic \qs space $\XX = (M,E,\lambda)$ is an analytic \qs space \emph{with connection}, if $E$  is equipped with a \emph{holomorphic} connection $\D$. An analytic quasi-smooth space is \emph{flat} if the holomorphic connection on $E$ is a flat connection. 
		}
		
		\item{  If $\XX = ( M,E, \lambda, \D^{E})$ and $\YY = (N,F, \zeta, \D^{F})$ are analytic \qs spaces with connection, then a flat morphism $\Phi = (\phi, \phi^{\#})$ of analytic \qs spaces with connection is a morphism of analytic \qs spaces $\Phi: \XX \rightarrow \YY$
			such that $\D \phi^{\#} = 0$. That is, for any section $s : M \rightarrow E$, we have that $\D \phi^{\#} s =  \phi^{\#} \D s$.
		}
		
	\end{enumerate}
\end{defn}

\begin{ex}
	Continuing with example 1.4 above, we can take the differential $dF = (3x_{1}^{2} - x_{2})dx_{1} - x_{1}dx_{2}  $ which is a section of the cotangent bundle $\Omega_{M} \cong \aaa^{2} \times \CC^{2}$. This defines an analytic \qs space, whose classical truncation is given by the vanishing of $dF$ -- that is, the classical truncation is given by $\Crit(F) \subset \aaa^{2}$. 
\end{ex}

This is an instance of one of the most important examples of analytic \qs spaces that will have a central role in this article. They are known as \emph{derived critical loci}. 

\begin{defn}
	Suppose that $M$ is a smooth analytic space, and $S: M \rightarrow \CC$ is an analytic function. Then, let $ X = \Crit(S) = Z(dS)$, where $dS$ is a section of the cotangent bundle $\Omega_{M}$. Then, the \emph{derived critical locus} of  $S: M \rightarrow \CC$ is written as $\dCrit(S)$, and defined as:
	\[
	\dCrit(S) = (M,\Omega_{M}, dS)
	\]
	We see that by definition $\tau^{0} \dCrit(S) = \Crit(S)$, so that the quasi-smooth analytic space $\dCrit(S)$ is a derived enhancement of the critical locus of $S$ in $M$. 
\end{defn}

\begin{defn}
	We say that an analytic \qs space $\YY = (N,F, \zeta)$ is a \emph{contractible} analytic \qs space, if the map $\YY \rightarrow \star$ is a quasi-isomorphism of dg-spaces, where $\star$ is a singleton point space. 
	
	\begin{rmk}
		Recall that an analytic manifold $M$ can be considered an analytic \qs space with the data $(M,0,0)$. Thus, $\star = (\star, 0 ,0 )$ as an analytic \qs space. Then, a quasi-isomorphism from $\YY \rightarrow \star$ means that $\tau^{0}( \YY ) = \nu = Z(\zeta)$: the section $\zeta$ vanishes at a single point $\nu  \in N$. Furthermore, the dg-tangent space at the unique point $\nu$ 
		
		\[
		T_{\nu } \YY = \big[ T_{\nu } N \xrightarrow{ D_{\nu } \zeta } F_{\nu } \big]
		\]
		\noindent has zero cohomology. In otherwords, the differential $D_{\nu } \zeta$ is an isomorphism of vector spaces. 
	\end{rmk}

\end{defn}

\begin{lem}
	If $\YY = (N, F,\zeta)$ is a contractible analytic \qs space, then $\YY \cong (U_{\nu}, (F_{\nu})_{U_{\nu}}, D_{\nu}\zeta)$, where $\tau^{0}\YY = \{ \nu \}$, $U_{\nu} \subset T_{\nu}N$ is an open subset of the origin, and $(F_{\nu})_{U_{\nu}}$ is the trivial $F_{\nu}$ bundle over $U_{\nu}$
\end{lem}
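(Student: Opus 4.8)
The plan is to read off the linearized local form from the Remark just above, and then to straighten $\zeta$ by a holomorphic change of coordinates supplied by the inverse function theorem.

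First, by that Remark, contractibility of $\YY=(N,F,\zeta)$ means precisely that $\tau^{0}\YY=Z(\zeta)$ consists of a single reduced point $\{\nu\}$ and that the intrinsic derivative $D_{\nu}\zeta\colon T_{\nu}N\to F_{\nu}$ — which is defined canonically, with no choice of connection, because $\nu\in Z(\zeta)$ — is a linear isomorphism; in particular $\dim_{\CC}N=\rk F$. This is what makes the right-hand side of the asserted isomorphism meaningful: the $D_{\nu}\zeta$ appearing there is the restriction to $U_{\nu}$ of a genuine linear isomorphism $T_{\nu}N\to F_{\nu}$.

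Next I would pass to a convenient local presentation. Since only a neighbourhood of $\nu$ is relevant, choose an open $W_{0}\ni\nu$ small enough that $F|_{W_{0}}$ is trivial, and fix a holomorphic frame; this identifies $F|_{W_{0}}\cong(F_{\nu})_{W_{0}}$ and turns $\zeta$ into an analytic map $\zeta_{0}\colon W_{0}\to F_{\nu}$ with $\zeta_{0}(\nu)=0$ whose ordinary derivative at $\nu$ coincides with the intrinsic $D_{\nu}\zeta$ (the derivative of a section at a zero being independent of the trivialization). Since $D_{\nu}\zeta$ is invertible, the holomorphic inverse function theorem yields an open $W$ with $\nu\in W\subseteq W_{0}$ on which $\zeta_{0}$ restricts to a biholomorphism $\zeta_{0}|_{W}\colon W\iso\Omega:=\zeta_{0}(W)$, an open neighbourhood of $0$ in $F_{\nu}$.

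Finally I would exhibit the dg biholomorphism explicitly. Set $U_{\nu}:=(D_{\nu}\zeta)^{-1}(\Omega)\subseteq T_{\nu}N$, an open neighbourhood of the origin, and define
\[
\phi:=(D_{\nu}\zeta)^{-1}\circ\zeta_{0}|_{W}\colon W\iso U_{\nu},\qquad \phi^{\#}:=\Id_{F_{\nu}},
\]
the latter regarded as the identity on fibres along $\phi$. Then $\phi$ is a biholomorphism with $\phi(\nu)=0$, $\phi^{\#}$ is fibrewise an isomorphism, and the required compatibility $\phi^{*}(D_{\nu}\zeta)=\phi^{\#}\circ(\zeta_{0}|_{W})$ unwinds to the tautology $D_{\nu}\zeta\circ\phi=\zeta_{0}|_{W}$, true by construction. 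Hence $\Phi=(\phi,\phi^{\#})$ is a dg biholomorphism $\YY|_{W}\iso(U_{\nu},(F_{\nu})_{U_{\nu}},D_{\nu}\zeta)$, and the statement follows upon identifying the contractible space $\YY$ with its restriction to the neighbourhood $W$ of its unique classical point. I expect the one genuinely delicate point to be exactly this last identification: the straightening is valid only near $\nu$, so strictly one obtains the isomorphism after shrinking $N$ to such a $W$ — which is consistent with the statement permitting $U_{\nu}$ to be an arbitrarily small polydisc about the origin. Everything else (functoriality of $\tau^{0}$, intrinsicality of $D_{\nu}\zeta$, and the inverse function theorem) is routine.
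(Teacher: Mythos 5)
Your proof is correct, but it takes a sharper route than the paper's. The paper's own argument simply chooses a chart of $N$ around $\nu$ (identified with a neighbourhood $U_{\nu}$ of the origin in $T_{\nu}N$), trivializes $F$ over it, and then asserts that the linear model $(U_{\nu},(F_{\nu})_{U_{\nu}},D_{\nu}\zeta)$ maps quasi-isomorphically to $\YY$; it does not address the fact that in a generic chart the section one actually gets is $\zeta\circ\phi$ rather than its linearization, so the compatibility condition $\phi^{*}\zeta=\phi^{\#}\circ D_{\nu}\zeta$ is not verified and the conclusion is only a quasi-isomorphism. You instead make $\zeta$ itself (composed with $(D_{\nu}\zeta)^{-1}$) the chart via the inverse function theorem, which forces the compatibility to hold on the nose with $\phi^{\#}=\Id_{F_{\nu}}$ and yields a genuine dg biholomorphism $\YY|_{W}\iso(U_{\nu},(F_{\nu})_{U_{\nu}},D_{\nu}\zeta)$. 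What each buys: the paper's version is shorter and needs nothing beyond the existence of charts and local trivializations, but delivers only the weak equivalence (which is all the lemma is ever used for, since ``linear contractible'' is defined via this presentation); your version costs an application of the holomorphic inverse function theorem but upgrades the local statement to a strict isomorphism and, as a byproduct, actually constructs the morphism whose existence the paper leaves implicit. Your closing caveat is also exactly right and applies equally to the paper's proof: both arguments only identify $\YY|_{W}$ with the linear model, and the passage from $\YY|_{W}$ to $\YY$ is legitimate only because $\tau^{0}\YY=\{\nu\}$ makes the open immersion $\YY|_{W}\hookrightarrow\YY$ a quasi-isomorphism, so the $\cong$ in the statement must be read in that sense.
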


\begin{proof}
	
	As $N$ is a manifold, we can find an open set $U_{\nu}$ of the origin in $T_{\nu}N$ small enough so that $U_{\nu}$ is contained in a chart of $N$ containing $\nu$. We can also suppose that the bundle $F$ is trivialized over the open set $U_{\nu}$. Therefore, $F|_{U_{\nu}} \cong U_{\nu} \times F_{\nu}$. Therefore, by forming the quasi-smooth analytic space $(U_{\nu}, (F_{\nu})_{U_{\nu}}, D_{\nu} \zeta)$, we see that we have a quasi-isomorphism  $\Phi: (U_{\nu}, (F_{\nu})_{U_{\nu}}, D_{\nu} \zeta) \rightarrow \YY$. 
\end{proof}

If $\YY = (N, F, \zeta)$ is a contractible analytic \qs space of the form given in lemma 1.21, then we say that $\YY$ is a \emph{linear contractible} analytic \qs space. Note that while the property of being contractible is invariant under quasi-isomorphisms, the propery of being linear contractible is \emph{not}.\\

Suppose that $\XX = (M,E,\lambda)$ is a dg-space. As $M$ is a complex manifold, $M$ is equipped with a holomorphic atlas. Therefore, for each $\mu \in M$ there is an open neighbourhood $U_{M}^{\mu} \subset M$ of $\mu$, such that $\phi_{\mu}: U_{\mu} \hookrightarrow M$ is a chart for $M$ with $\phi_{\mu}(U_{\mu}) = U_{M}^{\mu}$. We take the convention that $U_{\mu}$ is an open neighbourhood of the origin in a vector space $V_{\mu}$, with $\phi_{\mu}(\0_{\mu}) = \mu \in M$. \\

Therefore, $\uU_{\mu} := \phi_{\mu}^{*}\XX $ is a local analytic \qs space with an open immersion $\Phi_{\mu}: \uU_{\mu} \hookrightarrow \XX$. As $M$ is a manifold, there is an atlas given by a collection of charts $(U_{M}^{\mu}, \phi_{\mu})$ such that $\bigcup\limits_{\mu} U_{M}^{\mu} = M$. Thus, there is a corresponding collection of local analytic \qs spaces $\uU_{\mu}$ with $\Phi_{\mu}: \uU_{\mu} \xrightarrow{\sim} \XX|_{U_{M}^{\mu}}$ where $\{ U_{M}^{\mu} \}$ covers $M$.\\

\begin{defn}
	Let $\XX = (M,E,\lambda)$ be an analytic \qs space, with a given atlas $\{ (U_{\mu}, \phi_{\mu}) \}$ for $M$.
	\begin{itemize}
		
		\item{ A dg chart of $\XX$ is a local analytic \qs space $\uU_{\mu} = (U_{\mu}, \lambda_{U_{\mu}})$, with an isomorphism of analytic \qs spaces $\Phi_{\mu}: \uU_{\mu} \xrightarrow{\sim} \XX|_{\phi_{\mu}(U_{\mu})}$.}
		\item{A dg atlas is a collection of dg charts $\Phi_{\mu}: \uU_{\mu} \xrightarrow{\sim} \XX|_{\phi(U_{\mu})}$ such that $\bigcup\limits_{\mu} \phi_{\mu}(U_{\mu}) = M$.}
		
	\end{itemize}  
\end{defn}

Suppose that $\Phi_{\mu}: \uU_{\mu} \xrightarrow{\sim} \XX|_{U_{M}^{\mu}}$ is a dg chart around $\mu$. Then, as $\uU_{\mu} = ( \DDelta^{n}(\0, \rho_{\mu} ), \lambda_{\mu})$ is a local analytic \qs space, it corresponds to an analytic mapping germ $\lambda_{\mu}: (\DDelta^{n}(\0, \rho_{\mu} ), \0_{\CC^{n}} ) \rightarrow (\CC^{r_{\mu} }, \0_{\CC^{r_{\mu}}})$ which is embedded in the smooth manifold $M$. Of course, up to isomorphism of germs, the choice of embedding manifold $M$ or target $E$ are by no means unique. The notion of quasi-isomorphism between analytic \qs spaces allows us to account for the possible ambiguity of the choices of $M$ and $E$. 

\begin{defn}(quasi dg charts) \\
	Suppose that $\XX$ is an analytic \qs space with a point $\mu \in \tau^{0}\XX$. We say that a local analytic \qs space $\uU'_{\mu} = (U_{\mu}', \lambda'_{\mu})$ with an injective morphism $\Phi'_{\mu}: \uU'_{\mu} \hookrightarrow \XX$ is a quasi dg chart around $\mu$ if there exists an open neighbourhood $U_{M}^{\mu} \subseteq M$ such that $\Phi'_{\mu}: \uU'_{\mu} \hookrightarrow \XX|_{U_{M}^{\mu}}$ is a quasi-isomorphism of analytic \qs spaces. 
\end{defn}

\noindent Note that if $\Phi'_{\mu}: \uU_{\mu}' \hookrightarrow \XX$ is a quasi dg chart, the dimension $m$ of the polydisc $U_{\mu}'$ is less than or equal or $n$.  On the other hand, if $\Phi'_{\mu}: \uU'_{\mu} \hookrightarrow \XX|_{U_{M}^{\mu}}$ is a quasi dg chart, then
the condition that $\Phi'_{\mu} : \uU'_{\mu} \hookrightarrow \XX|_{U_{M}^{\mu}}$ is a \emph{quasi-isomorphism} in particular means that the dimension of $U_{\mu}$ must be \emph{at least} equal to $d_{\mu} := \dim T_{\mu} X$. \\

The reader that is familiar with singularity theory will recall that the germ of an analytic space $(X, \0)$ given by closed embedding of analytic spaces $X \hookrightarrow M$ has an \emph{embedding dimension}, denoted by $\ed_{\0}X$. The embedding dimension of the germ $(X,\mu)$ is by definition the minimum dimension $n$ of affine space $(\aaa^{n}, \0)$ such that there exists an embedding of analytic germs $(X, \mu) \hookrightarrow (\aaa^{n},0)$. It is a standard result that the embedding dimension for the analytic germ $(X,\mu)$ is given by $d_{\mu} := \dim T_{\mu} X$.

\begin{defn}
	We say that a quasi-smooth analytic space $\XX = (M,E,\lambda)$ is \emph{minimal} at a point $\mu \in \tau^{0} \XX $ if $D_{\mu} \lambda = 0$.  
\end{defn}

\begin{lem} \label{mindim}
	Suppose that $\XX = (M,E,\lambda)$ is an analytic \qs space, and  $\mu \in X$. Then, a quasi dg chart of the form $\uU_{\mu}^{\min} = (  U_{\mu}^{\min}, \lambda^{\min}_{\mu}: U_{\mu}^{\min} \rightarrow \CC^{d})$ where $U_{\mu}^{\min} \subset \CC^{d_{\mu}}$ is an open polydisc of the origin and $d_{\mu} = \dim T_{\mu} X$, is a local analytic \qs space which is \emph{minimal} at $\0_{\CC^{d_{\mu}}}$.

\end{lem}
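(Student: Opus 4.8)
The statement is the \qs counterpart of the existence of a minimal model: one wants to split off a linear contractible factor, leaving a \qs space whose defining section has vanishing derivative at the base point. The plan proceeds in three stages. \textbf{First}, reduce to a local model. Choosing a dg chart around $\mu$ replaces $\XX$ by a local analytic \qs space $\uU = (U,\lambda\colon U\to\CC^{r})$ with $U\subset\CC^{n}$ a polydisc about $\0$, $\lambda(\0)=0$, and $\mu$ corresponding to $\0$. Set $k:=\rk D_{\0}\lambda$; then $d_{\mu}=\dim T_{\mu}X=\dim\ker D_{\0}\lambda=n-k$, and the target of the minimal model will necessarily be $\CC^{d}$ with $d:=r-k=\dim\coker D_{\0}\lambda$ (forced, since a quasi-isomorphism of \qs spaces induces a quasi-isomorphism of dg tangent complexes, so $H^{\bullet}$ of the tangent complex is an invariant).

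\textbf{Second}, normalise $\lambda$. A constant linear change of the target coordinates — a constant $\GL(\CC^{r})$-valued bundle automorphism, hence an isomorphism of \qs spaces — arranges that $d_{\0}\lambda_{1},\dots,d_{\0}\lambda_{k}$ are linearly independent while $d_{\0}\lambda_{k+1}=\dots=d_{\0}\lambda_{r}=0$. Picking source coordinates so that $d_{\0}\lambda_{1},\dots,d_{\0}\lambda_{k},dx_{k+1},\dots,dx_{n}$ is a basis of $(\CC^{n})^{\vee}$, the map $x\mapsto(\lambda_{1}(x),\dots,\lambda_{k}(x),x_{k+1},\dots,x_{n})$ has invertible differential at $\0$, hence is a biholomorphism of a smaller polydisc onto its image by the holomorphic inverse function theorem; pulling back along its inverse (a body isomorphism, harmless after re-shrinking to a sub-polydisc) lets me assume, with $y=(y',y'')$, $y'=(y_{1},\dots,y_{k})$, $y''=(y_{k+1},\dots,y_{n})$, that $\lambda(y',y'')=(y',g(y',y''))$ with each component of $g\colon U\to\CC^{d}$ vanishing to order $\ge 2$ at $\0$. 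By Hadamard's lemma on the polydisc, $g(y',y'')=\bar g(y'')+H(y',y'')\,y'$ with $\bar g(y''):=g(0,y'')$ (so $\bar g(\0)=0$ and $D_{\0}\bar g=0$) and $H$ a holomorphic $d\times k$ matrix; the bundle automorphism $\phi^{\#}(y)=\left(\begin{smallmatrix}I_{k}&0\\-H(y)&I_{d}\end{smallmatrix}\right)\in\GL(\CC^{r})$ (unipotent, $\phi^{\#}(\0)=I_{r}$) sends the section $\lambda$ to $\tilde\lambda(y',y'')=(y',\bar g(y''))$, giving an isomorphism $(U,\lambda)\iso(U,\tilde\lambda)$. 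This is precisely where the extra flexibility of \qs morphisms — that $\phi^{\#}$ may be an arbitrary, not merely constant, $\GL$-valued map — is used, and I expect this step to be the crux; everything after it is routine fibrewise linear algebra.

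\textbf{Third}, split off the contractible factor. Shrink $U$ to a product polydisc $\DDelta'\times\DDelta''$ with $\DDelta''\subset\CC^{d_{\mu}}$, and set $\uU_{\mu}^{\min}:=(\DDelta'',\lambda_{\mu}^{\min})$ with $\lambda_{\mu}^{\min}:=\bar g\colon\DDelta''\to\CC^{d}$, which is minimal at $\0$ since $D_{\0}\bar g=0$. The body embedding $\iota\colon y''\mapsto(0,y'')$ together with the fibrewise-injective bundle map $\iota^{\#}\colon\CC^{d}\hookrightarrow\CC^{k}\oplus\CC^{d}=\CC^{r}$, $v\mapsto(0,v)$, is compatible with the sections — $\iota^{*}\tilde\lambda$ and $\iota^{\#}\circ\bar g$ both equal $y''\mapsto(0,\bar g(y''))$ — hence defines an injective morphism $\Phi\colon\uU_{\mu}^{\min}\to(U,\tilde\lambda)$. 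It is a quasi-isomorphism: on classical truncations $\iota$ identifies $Z(\bar g)\subset\DDelta''$ with $Z(\tilde\lambda)=\{0\}\times Z(\bar g)$ as analytic spaces (the ideal $(y',\bar g(y''))$ pulls back to $(\bar g(y''))$); and at every $\nu\in Z(\bar g)$ the square \eqref{tangentmap} presents $T^{\bullet}_{(0,\nu)}(U,\tilde\lambda)$ as the direct sum of $T^{\bullet}_{\nu}\uU_{\mu}^{\min}$ with the acyclic complex $[\CC^{k}\xrightarrow{\ \Id\ }\CC^{k}]$, with $\Phi$ the inclusion of the first summand, hence a quasi-isomorphism of the rows.

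\textbf{Conclusion.} Composing $\Phi$ with the isomorphisms from the normalisation step and with the dg chart isomorphism yields an injective morphism $\uU_{\mu}^{\min}\hookrightarrow\XX$ which is a quasi-isomorphism onto $\XX|_{U_{M}^{\mu}}$, where $U_{M}^{\mu}$ is the (open) image in $M$ of the final shrunk polydisc — using that a composite of a quasi-isomorphism with isomorphisms is again a quasi-isomorphism. Thus $\uU_{\mu}^{\min}$ is a quasi dg chart around $\mu$, minimal at $\0$, with $U_{\mu}^{\min}\subset\CC^{d_{\mu}}$ as required. The only delicate inputs are the holomorphic straightening of the transverse components of $\lambda$ and the Hadamard-type gauge transformation killing their dependence on $y'$; the repeated passage to sub-polydiscs is immaterial, since the notion of quasi dg chart only requires a quasi-isomorphism onto \emph{some} restriction $\XX|_{U_{M}^{\mu}}$.
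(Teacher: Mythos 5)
You have proven a different statement from the one the lemma asserts. Lemma \ref{mindim} is not an existence claim: it says that \emph{any} quasi dg chart $\uU_{\mu}^{\min} = (U_{\mu}^{\min}, \lambda_{\mu}^{\min})$ around $\mu$ whose source polydisc lies in $\CC^{d_{\mu}}$, with $d_{\mu} = \dim T_{\mu}X$, is \emph{automatically} minimal at the origin. The paper makes this reading explicit immediately afterwards (``Next, we would like to show that we can always find a quasi dg chart whose dimension attains this embedding dimension'' --- existence is deferred to the later proposition on minimal model decompositions). Your argument constructs one particular minimal chart of source dimension $d_{\mu}$; it does not show that \emph{every} quasi dg chart of that source dimension has $D_{\0}\lambda_{\mu}^{\min} = 0$, and exhibiting a single minimal representative cannot by itself rule out a non-minimal one of the same dimension. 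So, as a proof of the lemma as stated, the proposal misses the actual claim.

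The intended argument is a short dimension count, which you could supply in two lines: since the chart morphism is a quasi-isomorphism of \qs spaces, the induced map on $H^{0}$ of the dg tangent complexes is an isomorphism $\ker(D_{\0}\lambda_{\mu}^{\min}) \cong H^{0}\big(T_{\mu}M \xrightarrow{D_{\mu}\lambda} E_{\mu}\big) = T_{\mu}X$, which has dimension $d_{\mu}$; as $\ker(D_{\0}\lambda_{\mu}^{\min})$ sits inside the $d_{\mu}$-dimensional space $T_{\0}U_{\mu}^{\min}$, it must be all of it, i.e.\ $D_{\0}\lambda_{\mu}^{\min} = 0$. That said, the construction you give --- straightening the transverse components of $\lambda$ via the inverse function theorem, killing their $y'$-dependence by a Hadamard-type unipotent bundle automorphism, and splitting off the acyclic factor $[\CC^{k} \xrightarrow{\Id} \CC^{k}]$ --- is mathematically sound and is essentially the content of the paper's subsequent proposition producing a minimal model decomposition $\uU_{\mu} \cong \uU_{\mu}^{\min} \times j_{\0}(N_{\mu})$; it belongs there, not here.
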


\begin{proof}
	As $\Phi_{\mu}$ is a quasi-isomorphism, we have that the following commutative square is a quasi-isomorphism of the rows (where $\Phi_{\mu} = (\phi_{\mu} , \phi_{\mu}^{\#})$):
	
	\[
	\begin{tikzcd}
		T_{\0} \big( \DDelta^{d_{\mu}}(\ep_{\mu}) \big)  \arrow[r, "D_{\0} \lambda^{\min}_{\mu} " ] \arrow[d, "D_{\0} \phi_{\mu}",swap] & \CC^{\ob(\mu) }  \arrow[d, "\phi_{\mu}^{\#}"] \\
		T_{\mu} M \arrow[r, "D_{\mu} \lambda" ] & E_{\mu} \\
	\end{tikzcd}
	\]
	
	For the bottom row, we have that $H^{0} ( T_{\mu} M \xrightarrow{ D_{\mu} \lambda } E_{\mu} ) = T_{\mu}X$. Therefore, if $\dim T_{\0} \big( \DDelta^{d_{\mu}  }(\ep_{\mu} ) \big) = \dim T_{\mu} X$ and the map $ \ker ( D_{\0} \lambda^{\min}_{\mu} ) \rightarrow T_{\mu} X$ is to be an isomorphism, we must have that $ \ker ( D_{\0} \lambda^{\min}_{\mu} ) = T_{\0} \big( \DDelta^{d_{\mu}  }(\ep_{\mu} ) \big)$,  or that $D_{\0} \lambda^{\min}_{\mu} = 0$. In other words, the analytic \qs space $\uU^{\min}_{\mu}$ is \emph{minimal} at $\0$.
\end{proof}

\begin{lem}
	Suppose that $\XX = (M,E,\lambda)$ is an analytic \qs space that is minimal at point $\mu \in \tau^{0} \XX$. Suppose that $\Phi: \XX \rightarrow \YY = (N, F, \zeta)$ is a quasi-isomorphism of analytic \qs spaces, with $\YY$ being minimal at $\phi (\mu) \in \tau^{0} \YY $. Then, there exist open subsets $U_{\mu} \subseteq M$ containing $\mu$, and $U_{\phi (\mu)} \subseteq N$ containing $\phi(\mu)$ such that $\XX|_{U_{\mu}} \cong \YY |_{U_{\phi(\mu)}}$ (that is, $\XX$ and $\YY$ are strictly isomorphic in a neighbourhood of $\mu$ and $\phi (\mu)$ respectively). 
\end{lem}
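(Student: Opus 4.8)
The plan is to use the minimality hypotheses to upgrade the ``quasi-isomorphism of tangent complexes'' condition into the statement that \emph{both} the body map and the bundle map of $\Phi$ are strict isomorphisms at $\mu$, and then to spread this out over an open neighbourhood using the holomorphic inverse function theorem together with the fact that the isomorphism locus of a bundle map is open.

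First I would unwind the tangent data at $\mu$. Since $\XX$ is minimal at $\mu$ we have $D_{\mu}\lambda = 0$, so the complex $T^{\bullet}_{\mu}\XX = [\,T_{\mu}M \xrightarrow{\,0\,} E_{\mu}\,]$ has $H^{0} = T_{\mu}M$ and $H^{1} = E_{\mu}$; likewise, minimality of $\YY$ at $\phi(\mu)$ gives $T^{\bullet}_{\phi(\mu)}\YY = [\,T_{\phi(\mu)}N \xrightarrow{\,0\,} F_{\phi(\mu)}\,]$ with $H^{0} = T_{\phi(\mu)}N$ and $H^{1} = F_{\phi(\mu)}$. In the commutative square \eqref{tangentmap} defining $D_{\mu}\phi$ both horizontal arrows are therefore zero, so the map induced on $H^{0}$ is exactly $D_{\mu}\phi\colon T_{\mu}M \to T_{\phi(\mu)}N$ and the map induced on $H^{1}$ is exactly $\phi^{\#}|_{\mu}\colon E_{\mu} \to F_{\phi(\mu)}$. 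Because $\Phi$ is a quasi-isomorphism, both of these are isomorphisms of finite-dimensional vector spaces. This is precisely where minimality of both $\XX$ and $\YY$ is essential: for non-minimal $2$-term complexes a quasi-isomorphism of the rows need not have invertible component maps (a linear contractible summand can always be cancelled).

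Next I would propagate this to an open neighbourhood. Since $D_{\mu}\phi$ is a linear isomorphism, the holomorphic inverse function theorem yields open sets $U^{0}_{\mu}\subseteq M$ containing $\mu$ and $V^{0}\subseteq N$ containing $\phi(\mu)$ with $\phi|_{U^{0}_{\mu}}\colon U^{0}_{\mu}\xrightarrow{\ \sim\ }V^{0}$ a biholomorphism. For the bundle map, note that $\rk E$ and $\rk F$ are locally constant and agree at $\mu$ via $\phi^{\#}|_{\mu}$, and that in local holomorphic trivialisations of $E$ near $\mu$ and of $F$ near $\phi(\mu)$ the map $\phi^{\#}$ is a holomorphic matrix-valued function whose value at $\mu$ is invertible; hence $\phi^{\#}$ is fibrewise an isomorphism over some neighbourhood of $\mu$. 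Shrinking, I obtain an open $U_{\mu}\subseteq U^{0}_{\mu}$ containing $\mu$ with $U_{\phi(\mu)}:=\phi(U_{\mu})$ open in $N$, such that $\phi|_{U_{\mu}}\colon U_{\mu}\xrightarrow{\sim}U_{\phi(\mu)}$ is a biholomorphism and $\phi^{\#}|_{U_{\mu}}\colon E|_{U_{\mu}}\xrightarrow{\sim}F|_{U_{\phi(\mu)}}$ is an isomorphism of vector bundles covering it.

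Finally I would assemble the isomorphism of \qs spaces. Restricting $\Phi$ gives a morphism $\Phi|_{U_{\mu}}=(\phi|_{U_{\mu}},\phi^{\#}|_{U_{\mu}})\colon \XX|_{U_{\mu}}\to\YY|_{U_{\phi(\mu)}}$, still satisfying $\phi^{*}\zeta=\phi^{\#}\circ\lambda$. Since both components are invertible, $\big((\phi|_{U_{\mu}})^{-1},(\phi^{\#}|_{U_{\mu}})^{-1}\big)$ is a morphism of \qs spaces in the opposite direction — the section-compatibility $(\phi^{-1})^{*}\lambda = (\phi^{\#})^{-1}\circ\zeta$ follows by applying $(\phi^{\#})^{-1}$ and pulling back along $\phi^{-1}$ in the identity $\phi^{*}\zeta=\phi^{\#}\circ\lambda$ — and it is plainly a two-sided inverse of $\Phi|_{U_{\mu}}$. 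Hence $\Phi|_{U_{\mu}}$ is a strict isomorphism of analytic \qs spaces, so $\XX|_{U_{\mu}}\cong\YY|_{U_{\phi(\mu)}}$, as required. The one genuinely delicate point is the first step, namely extracting strictness from a derived (quasi-isomorphism) statement, which is exactly what the minimality hypotheses are there to supply; the remaining steps are routine inverse-function-theorem and openness arguments.
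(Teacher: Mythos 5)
Your proof is correct and follows essentially the same route as the paper: minimality kills both horizontal differentials, the quasi-isomorphism hypothesis then forces $D_{\mu}\phi$ and $\phi^{\#}|_{\mu}$ to be isomorphisms, and the inverse function theorem plus openness of the invertibility locus of the bundle map spread this out to a strict local isomorphism. You actually supply slightly more detail than the paper does (the explicit invocation of the inverse function theorem and the check that the inverse pair is again a morphism of quasi-smooth spaces), which is welcome.
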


\begin{proof}
	
	As $\XX \rightarrow \YY$ is a quasi-isomorphism, we have the following exact square:
	
	\[
	\begin{tikzcd}
		T_{\mu} M \arrow[r, "D_{\mu} \lambda" ] \arrow[d, "D_{\mu} \phi"] & E |_{\mu} \arrow[d, "\phi^{\#}|_{\mu}"]  \\
		T_{\phi (\mu)} N \arrow[r, "D_{\phi (\mu)} \zeta" swap]  & F |_{\phi (\mu)} \\
	\end{tikzcd}
	\]

	As $\XX$ and $\YY$ are minimal at $\mu$ and $\phi (\mu)$ respectively, we have that $D_{\mu} \lambda = 0 = D_{\phi(\mu)} \zeta $. As the square is quasi-isomorphism of the rows, this means that the vertical arrows given by $( D_{\mu} \phi , \phi^{\#}|_{\mu} )$ are isomorphisms. Therefore, there are open sets $U_{\mu}$ and $U_{\phi(\mu)}$ in $M$ and $N$ containing $\mu$ and $\phi (\mu)$ respectively, that are isomorphic as analytic spaces. The above exact square also shows us that $E|_{U_{\mu}}$ and $(\phi^{*} F)|_{ U_{\mu}}$  are isomorphic as bundles over $U_{\mu}$.
\end{proof}

Lemma \ref{mindim} says that around a point $\mu \in X$, a quasi dg chart $\uU_{\mu}^{\min} = ( \DDelta^{d_{\mu}  }(\0, \rho^{\min}_{\mu} ), \ul{\CC}^{r^{\min}_{\mu}}, \lambda^{\min}_{\mu})$ such that $d_{\mu}$ coincides with the embedding dimension of the germ $(X,\mu)$ is a minimal analytic \qs space. Next, we would like to show that we can always find a quasi dg chart whose dimension attains this embedding  dimension of $(X,\mu)$. That is, we would like to show that for any $\mu \in X = \tau^{0}(\XX)$, there exists a \emph{minimal dg chart} $\uU^{\min}_{\mu} \hookrightarrow \XX$ around $\mu$. 

\subsubsection*{Homotopy retracts and minimal models of local analytic \qs spaces} 

Suppose that $\uU_{\mu} = (U_{\mu}, \ul{\CC}^{r_{\mu} }, \lambda_{\mu})$ is a local analytic \qs space. Previously, we had seen that if $D_{\0_{\mu}} \lambda_{\mu} = 0$, then $\uU_{\mu}$ is \emph{minimal} at $\0_{\mu}$. In this section, we  show that one can always find a minimal dg chart, essentially by splitting off a subspace $U^{\min}_{\mu}$ of $U_{\mu}$ where $D_{\0_{\mu}} \lambda_{\mu} |_{U_{\mu}^{\min} } = 0$. Furthermore, we can find a minimal $\uU^{\min}_{\mu} \hookrightarrow \uU_{\mu}$ such that $\uU_{\mu}^{\min}$ is a homotopy deformation retract of $\uU_{\mu}$.

\begin{defn}
	Suppose that $\uU_{\mu} = (U_{\mu}, \ul{\CC}^{r_{\mu}}, \lambda_{\mu})$ is a local analytic \qs space. We define a minimal homotopy retract of $\uU_{\mu}$ to be the data of:  
	
	\begin{enumerate}
		\item{a \emph{minimal} local analytic \qs space $(\uU^{\min}_{\mu}, \0_{\mu})$, with a morphism $I_{\mu}: \uU_{\mu}^{\min} \hookrightarrow \uU_{\mu}$   }
		\item{a morphism  $P_{\mu}: \uU_{\mu} \rightarrow \uU_{\mu}^{\min}$, such that $P_{\mu} I_{\mu} = \Id_{\uU^{\min}_{\mu}}$}
		
		\item{a $1$-parameter family of morphisms $H^{t}_{\mu} = (h^{t}_{\mu}, h^{t}_{\mu}{}^{\sharp} ) : \uU_{\mu} \rightarrow \uU_{\mu}$ such that:

			\begin{itemize}
				\item{$h^{t}_{\mu}(x)$ is smooth in $t$ for all $x \in U_{\mu}$; and furthermore we have $H^{t}_{\mu} |_{t = 0 } = \Id_{\uU_{\mu} } $ and $H^{t}_{\mu} |_{t = 1} = I_{\mu} P_{\mu}$ }
				\item{ $H^{t}_{\mu} \circ I_{\mu} = I_{\mu}$ for all $t \in [0,1]$}  
			\end{itemize}
			\textbf{We refer to this $1$-parameter family $H_{\mu}^{t}$ of morphisms as a homotopy. } 
		}
	\end{enumerate}
	
	\noindent We say that $\uU_{\mu}$ has a \emph{minimal model decomposition} if we have further a decomposition of underlying bodies: $U_{\mu} \cong U_{\mu}^{\min} \times j_{\0}(N_{\mu}) = j( N_{U_{\mu}^{\min} })$, where $U_{\mu}^{\min}$ denotes the underlying body of $\uU_{\mu}^{\min}$, and $j_{\0}(N_{\mu})$ is a sufficiently small open polydisc around the origin of $N_{\mu}$.
	\\
	
	If $\uU_{\mu}$ has a minimal model decomposition, we write $\uU_{\mu} = \uU_{\mu}^{\min} \times j_{\0}(N_{\mu})$, or  where $U_{\mu}^{\min}$ is a minimal model which is a homotopy retract of $\uU_{\mu}$.  As a notational convenience, we may also write $\uU_{\mu}^{\min} \times \wh{N_{\mu}} := \uU_{\mu}^{\min} \times j_{\0}(N_{\mu}) $, and $\wh{N_{\mu}}$ in place of $j_{\0}(N_{\mu})$ as the specific choice of polydisc $j_{\0}( N_{\mu}) $ around the origin in $N_{\mu}$ is irrelevant.
	
\end{defn}

\begin{prop}
	Suppose that $\uU_{\mu} = ( U_{\mu}, \ul{\CC}^{r_{\mu}}, \lambda_{\mu})$ is a local analytic \qs space. Then, there exists a minimal model decomposition $\uU_{\mu} \cong \uU_{\mu}^{\min} \times j_{\0}(N_{\mu})$. In particular, there exists: 
	
	\begin{itemize}
		\item{A minimal chart $I_{\mu}: \uU_{\mu}^{\min} \hookrightarrow \uU_{\mu}$, and a projection $P_{\mu}: \uU_{\mu} \rightarrow \uU_{\mu}^{\min}$ such that $P_{\mu} I_{\mu} = \Id_{\uU_{\mu}^{\min} }$} 
		\item{A $1$-parameter family of morphisms $H_{\mu}^{t}: \uU_{\mu} \rightarrow \uU_{\mu}$ such that $H^{t}_{\mu} |_{t = 1 }  = \Id_{\uU_{\mu}}$, $H^{t}_{\mu}|_{t = 0 } = I_{\mu} P_{\mu}$, and $H^{t}_{\mu} \circ I_{\mu} = I_{\mu}$ for all $t \in [0,1]$.  } 
	\end{itemize} 
\end{prop}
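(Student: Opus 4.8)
The plan is to prove an analytic splitting (``Morse'') lemma for the germ $\lambda_\mu$ at $\0$, use it to exhibit the isomorphism $\uU_\mu\cong\uU^{\min}_\mu\times\wh N_\mu$, and then write the homotopy down explicitly in the split coordinates. First I would set $A:=D_{\0}\lambda_\mu\colon V\to\CC^{r_\mu}$, of some rank $\rho$, and fix linear splittings $V=K\oplus W$ with $K=\ker A$, and $\CC^{r_\mu}=C\oplus R$ with $R=\im A$ and $r^{\min}_\mu:=\dim C=r_\mu-\rho$. Writing a point of $V$ as $(x,y)\in K\oplus W$ and $\lambda_\mu=(\lambda_C,\lambda_R)$ for the two components, one has $D_{\0}\lambda_C=0$ (because $\im A=R$) and $D_{\0}\lambda_R|_W=A|_W\colon W\xrightarrow{\sim}R$ (it is injective and a map between spaces of equal dimension $\rho$). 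Since $\partial_y\lambda_R(\0)$ is invertible, the holomorphic implicit function theorem says $\sigma(x,y):=(x,\lambda_R(x,y))$ is a biholomorphism from a polydisc neighbourhood $U'_\mu\subseteq U_\mu$ of $\0$ onto a polydisc neighbourhood of $\0$ in $K\oplus R$, which I may take of product form $U^{\min}_\mu\times\wh N_\mu$ with $U^{\min}_\mu\subseteq K$ and $\wh N_\mu\subseteq R$ open polydiscs about the origin; here $N_\mu:=R=\im(D_\0\lambda_\mu)$. In the coordinates $(x,w):=\sigma(x,y)$ the section reads $\lambda_\mu\circ\sigma^{-1}(x,w)=(\mu(x,w),\,w)$, where $\mu(x,w):=\lambda_C(\sigma^{-1}(x,w))\in C$. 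Setting $\lambda^{\min}_\mu(x):=\mu(x,\0)$ gives $\lambda^{\min}_\mu(\0)=\0$ and, since $D_\0\lambda_C=0$, $D_{\0}\lambda^{\min}_\mu=0$; so $\uU^{\min}_\mu:=(U^{\min}_\mu,\ul{\CC}^{r^{\min}_\mu},\lambda^{\min}_\mu)$ is minimal at $\0_\mu$. (As is standard for local analytic \qs spaces, we allow ourselves to shrink $U_\mu$ to such a polydisc $U'_\mu$.)

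Next I would remove the $w$-dependence of $\mu$ by a \emph{bundle} automorphism rather than a further change of coordinates. By Hadamard's lemma in the $w$ variable (legitimate because polydiscs about $\0$ are star-shaped, so $(x,sw)$ stays in the domain for $s\in[0,1]$), one writes $\mu(x,w)-\mu(x,\0)=B(x,w)\,w$ with $B(x,w):=\int_{0}^{1}(\partial_w\mu)(x,sw)\,ds\in\Hom(R,C)$ analytic on $U^{\min}_\mu\times\wh N_\mu$. Then $g(x,w):=\begin{pmatrix}\Id_C & B(x,w)\\ 0 & \Id_R\end{pmatrix}$ lies in $\GL(C\oplus R)$ for every $(x,w)$ and satisfies $g(x,w)\cdot(\lambda^{\min}_\mu(x),\,w)=(\mu(x,w),\,w)=\lambda_\mu(\sigma^{-1}(x,w))$. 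Equip $\wh N_\mu$ with its linear contractible \qs structure (trivial bundle $\ul R$, identity section $w\mapsto w$), so that the product is $\uU^{\min}_\mu\times\wh N_\mu=\big(U^{\min}_\mu\times\wh N_\mu,\ \ul{\CC}^{r^{\min}_\mu}\boxplus\ul R,\ \lambda^{\min}_\mu\boxplus\id_{N_\mu}\big)$. Then $\Phi:=(\sigma^{-1},g)$ is a morphism of local analytic \qs spaces (the compatibility condition $\phi^{*}\zeta=\phi^{\#}\circ\lambda$ is exactly the identity just displayed), its body map is a biholomorphism and its bundle map is fibrewise invertible, so $\Phi$ is a dg biholomorphism and $\uU_\mu\cong\uU^{\min}_\mu\times\wh N_\mu$; on classical truncations it identifies $\tau^0\uU_\mu$ with $Z(\lambda^{\min}_\mu)\times\{\0\}$. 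Taking $I_\mu$ and $P_\mu$ to be $\Phi$ composed, respectively, with the evident inclusion $\uU^{\min}_\mu\hookrightarrow\uU^{\min}_\mu\times\wh N_\mu$ (on bodies $x\mapsto(x,\0)$, on bundles $\ul{\CC}^{r^{\min}_\mu}\hookrightarrow\ul{\CC}^{r^{\min}_\mu}\boxplus\ul R$) and with its $\uU^{\min}_\mu$-projection, one gets $P_\mu I_\mu=\Id_{\uU^{\min}_\mu}$.

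Finally I would build the homotopy in the split coordinates and transport it. On $\uU^{\min}_\mu\times\wh N_\mu$ put $H^{t}(x,w):=(x,tw)$ on bodies and $(H^{t})^{\#}:=\Id_C\oplus(t\cdot\Id_R)$ on bundles; the identity $(\lambda^{\min}_\mu(x),\,tw)=(\Id_C\oplus t\Id_R)\,(\lambda^{\min}_\mu(x),\,w)$ shows $H^{t}$ is a morphism of local analytic \qs spaces for each $t$ (note its bundle part need not be invertible, which is allowed). It is smooth in $t$, equals $\Id$ at $t=1$ and the composite $(\mathrm{incl})\circ(\mathrm{proj})$ at $t=0$, and fixes $\uU^{\min}_\mu\times\{\0\}$, so $H^{t}\circ(\mathrm{incl})=\mathrm{incl}$ for all $t$. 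Conjugating, $H^{t}_\mu:=\Phi\circ H^{t}\circ\Phi^{-1}\colon\uU_\mu\to\uU_\mu$ then satisfies $H^{t}_\mu|_{t=1}=\Id_{\uU_\mu}$, $H^{t}_\mu|_{t=0}=I_\mu P_\mu$, and $H^{t}_\mu\circ I_\mu=I_\mu$ for all $t\in[0,1]$. All maps involved are analytic on the fixed polydiscs $U^{\min}_\mu\times\wh N_\mu$, so convergence on the relevant domains is automatic, and after $\sigma$ the body decomposition $U'_\mu\cong U^{\min}_\mu\times j_{\0}(N_\mu)$ holds with $N_\mu=\im(D_\0\lambda_\mu)$ — the claimed minimal model decomposition.

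The main obstacle is not a single deep input — the holomorphic implicit function theorem and Hadamard's lemma are standard — but the bookkeeping. One must shrink $U_\mu$ to polydiscs on which both $\sigma^{-1}$ and the integral defining $B$ are defined and analytic; and one must check carefully that $P_\mu$ and the $H^{t}_\mu$, whose bundle components are deliberately \emph{not} fibrewise invertible, are nonetheless bona fide morphisms of local analytic \qs spaces, i.e.\ satisfy $\phi^{*}\zeta=\phi^{\#}\circ\lambda$ on the nose. Keeping the directions of $\sigma$, $g$, and the inclusion/projection mutually consistent so that this compatibility holds exactly is the delicate part; everything else is routine.
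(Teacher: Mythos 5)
Your proof is correct, and it rests on the same three ingredients as the paper's: the linear splitting $V=\ker(D_{\0}\lambda_\mu)\oplus W$ and $\CC^{r_\mu}=C\oplus\im(D_{\0}\lambda_\mu)$, the holomorphic implicit function theorem to put the section in the normal form $(\,\cdot\,,w)$ on a shrunken polydisc, and a Hadamard/fundamental-theorem-of-calculus integral to handle the $w$-dependence of the $C$-component. The difference is organizational, and it is a real one. The paper stops at the coordinates in which the section reads $(\wtil{\lambda}^{\min}_\mu(\mbf{z},\mbf{n}),\mbf{n})$ with $\wtil{\lambda}^{\min}_\mu$ still depending on $\mbf{n}$, and then pays for this by building the Hadamard correction $v''\cdot\int_0^1(\partial_{\mbf n}\wtil{\lambda}^{\min}_\mu)(\mbf z,t\mbf n)\,dt$ directly into the bundle components $\pi^{\#}_\mu$ and $h^{t\,\#}_\mu$ of the projection and the homotopy, verifying the compatibility $\pi^{\#}_\mu\circ\lambda^{+}_\mu=\lambda^{\min}_\mu\circ\pi_\mu$ by an explicit computation. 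You instead spend the Hadamard integral once, packaging it as the fibrewise bundle automorphism $g(x,w)$, which upgrades the body-level coordinate change to a genuine dg biholomorphism onto the literal product $\uU^{\min}_\mu\times\wh N_\mu$ carrying the product section $(\lambda^{\min}_\mu(x),w)$; after that, $P_\mu$ is the naive projection, $H^{t}$ is the naive scaling $(x,w)\mapsto(x,tw)$ with bundle map $\Id_C\oplus t\Id_R$, and the morphism conditions hold by inspection rather than by computation. Your route makes the non-invertible bundle components of $P_\mu$ and $H^{t}_\mu$ transparently legitimate and isolates all the analysis in one place; the paper's route avoids introducing the auxiliary automorphism $g$ at the cost of a slightly more delicate verification. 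Both yield the same decomposition (up to the harmless $t\mapsto 1-t$ reparametrization of the homotopy), and your explicit acknowledgment that $U_\mu$ must be shrunk to a polydisc on which $\sigma^{-1}$ and $B$ are defined is a point the paper leaves implicit.
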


\begin{proof}
	First, we define coordinates for $U_{\mu}$ so that $T_{\mu}: U_{\mu}^{\min} \times j_{\0}(N_{\mu}) \iso  U_{\mu}$ and $T^{\#}_{\mu}: \CC^{r_{\mu}} \cong \CC^{r_{\mu}'} \times \CC^{r_{\mu}''}$ so that $T_{\mu}^{\#} \circ \lambda_{\mu} \circ T_{\mu}^{-1} =(\wtil{\lambda}_{\mu}^{\min}(\bf{z}_{\mu}, \bf{n}_{\mu} ),  \bf{n}_{\mu})$. \\
	
	First, consider kernel $V_{\mu}^{\min}$ of the map:
	
	\[
	D_{\0_{\mu}} \lambda_{\mu}: V_{\mu} = T_{\0_{\mu}}U_{\mu} \rightarrow \ul{\CC}^{r_{\mu} } 
	\]
	and define $U_{\mu}^{\min} := V_{\mu}^{\min} \cap U_{\mu}$. Now, we choose a complementary subspace $N_{\mu}$ of $V_{\mu}^{\min}$ so that $V_{\mu} \cong V_{\mu}^{\min} \times N_{\mu}$. Then, have that the restriction $D_{\0_{\mu}} \lambda_{\mu} |_{N_{\mu} }$  is an isomorphism onto its image, so that we may find decompose $\CC^{r_{\mu}}$ into $\CC^{r_{\mu}' } \times \CC^{r''_{\mu}}$ where $D_{\0_{\mu}} \lambda_{\mu} |_{N_{\mu} } \cong \CC^{r_{\mu}''}$. \\ 
	Therefore, we can identify $N_{\mu}$ with its image under $D_{\0_{\mu}} \lambda_{\mu}|_{N_{\mu}}$ so that we have a commutative diagram:
	
	\begin{equation} \label{changecoord}
		\begin{tikzcd}[column sep = 5em]
			U_{\mu} \arrow[r, "\lambda_{\mu}" ] \arrow[d, "T_{\mu}"] & \CC^{r_{\mu}} \arrow[d, "T_{\mu}^{\#}"] \\
			U_{\mu}^{\min} \times N_{\mu}   \arrow[r, "T_{\mu}^{\#} \circ \lambda_{\mu} \circ T_{\mu}^{-1} " swap] &   \CC^{r_{\mu}'} \times \CC^{r_{\mu}''} 
		\end{tikzcd}
	\end{equation}
	
	where $T^{\#}_{\mu} \circ \lambda_{\mu} \circ T_{\mu}^{-1} ( \mbf{z}_{\mu}, \mbf{n}_{\mu} ) = (\wtil{\lambda}_{\mu}^{\min}( \mbf{z}_{\mu}, \mbf{n}_{\mu} ), \mbf{n}_{\mu} )$. Let us write 
	\[
	\lambda^{+}_{\mu} := T^{\#}_{\mu} \circ \lambda_{\mu} \circ T_{\mu}^{-1}  
	\]
	
	Now, we define $\lambda_{\mu}^{\min}(\mbf{z}_{\mu}) = \wtil{\lambda}_{\mu}^{\min}( \mbf{z}_{\mu}, \0 )$, and
	
	\[
	\uU_{\mu}^{\min} = (U_{\mu}^{\min}, \ul{\CC}^{r_{\mu}'}, \lambda^{\min}_{\mu})
	\]
	
	and along with $I_{\mu}: \uU_{\mu}^{\min} \hookrightarrow \uU_{\mu}$, given by $I_{\mu} = (\mbf{i}_{\mu}, \mbf{i}^{\sharp}_{\mu}  )$ where $\mbf{i}_{\mu}: U_{\mu}^{\min} \hookrightarrow U_{\mu} = U_{\mu}^{\min} \times j_{\0}(N_{\mu})$ and $\mbf{i}_{\mu}^{\sharp}: \CC^{r_{\mu}'} \hookrightarrow \CC^{r_{\mu}' } \times \CC^{r_{\mu}''}$ are given by the respective zero sections (i.e. $\mbf{i}_{\mu}(\mbf{z}_{\mu}) = (\mbf{z}_{\mu}, 0)$ and $\mbf{i}_{\mu}^{\sharp}( \mbf{n}_{\mu} ) = (0 , \mbf{n}_{\mu} )$). By \ref{changecoord}, $I_{\mu}$ is an immersion of analytic \qs spaces. \\
	
	Now, we define the morphism of analytic \qs spaces $P_{\mu}: U_{\mu} \rightarrow U_{\mu}^{\min}$. We need to define $P_{\mu} = (\pi_{\mu}, \pi_{\mu}^{\#})$ so that the diagram
	
	\begin{equation} \label{projectiondiagram}
		\begin{tikzcd}
			U_{\mu} \arrow[r, "\lambda^{+}_{\mu}"]  \arrow[d, "\pi_{\mu}"] & \ul{\CC}^{r_{\mu}} \arrow[d, "\pi_{\mu}^{\#}"] \\
			U_{\mu}^{\min} \arrow[r, "\lambda_{\mu}^{\min}"] & \ul{\CC}^{r_{\mu}'} 
		\end{tikzcd}
	\end{equation}

	To this end, we define $\pi_{\mu}(\mbf{z}_{\mu}, \mbf{n}_{\mu}) = \mbf{z}_{\mu} \in U_{\mu}^{\min}$, and 
	\[
	\pi_{\mu}^{\#}(v', v'') = v' - v'' \cdot  \big( \int_{0}^{1} \wtil{\lambda}_{\mu}^{\min}(\mbf{z}_{\mu}, t \mbf{n}_{\mu} dt) \big) 
	\]
	
	Note that with this definition, we have
	
	\begin{align*}
		& \pi_{\mu}^{\#} \circ \lambda_{\mu}^{+} ( \mbf{z}_{\mu}, \mbf{n}_{\mu} ) = \pi_{\mu}^{\#}( \wtil{\lambda}_{\mu}^{\min}( \mbf{z}_{\mu}, \mbf{n}_{\mu}), \mbf{n}_{\mu} ) \\ 
		& = \wtil{\lambda}_{\mu}^{\min}( \mbf{z}_{\mu}, \mbf{n}_{\mu} ) - \mbf{n}_{\mu} \cdot \big( \int_{0}^{1} \wtil{\lambda}_{\mu}^{\min}( \mbf{z}_{\mu}, t  \mbf{n}_{\mu} ) dt \big)  \\
		& = \wtil{\lambda}_{\mu}^{\min}( \mbf{z}_{\mu}, \mbf{n}_{\mu} ) - \big( \int_{0}^{1} \frac{d}{dt} \wtil{\lambda}_{\mu}^{\min}( \mbf{z}_{\mu}, t  \mbf{n}_{\mu} ) dt \big) \\
		& = \wtil{\lambda}_{\mu}^{\min}( \mbf{z}_{\mu}, \mbf{n}_{\mu} ) - \big( \wtil{\lambda}_{\mu}^{\min}( \mbf{z}_{\mu}, \mbf{n}_{\mu} )  - \wtil{\lambda}_{\mu}^{\min}( \mbf{z}_{\mu}, 0 ) \big) \\
		& = \wtil{\lambda}_{\mu}^{\min}(\mbf{z}_{\mu}, \0 ) = \lambda_{\mu}^{\min} \circ \pi_{\mu}( \mbf{z}_{\mu}, \mbf{n}_{\mu} )
	\end{align*}
	
	Therefore, $P_{\mu}: \uU_{\mu} \rightarrow \uU_{\mu}^{\min}$ is a morphism of analytic \qs spaces. \\

	Finally, let us show now that there is a homotopy retract context 
	
	\[
	\uU_{\mu}^{\min}  \rightleftarrowstack{I_{\mu} }{P_{\mu} }   \uU_{\mu}  \circlearrowleft{ H_{\mu}^{t} } 
	\] 
	
	We define $H_{\mu}^{t} = (h_{\mu}^{t}, h_{\mu}^{t}{}^{\#})$ where 
	\begin{align*}
		&  h_{\mu}^{t}( \mbf{z}_{\mu}, \mbf{n}_{\mu} ) = ( \mbf{z}_{\mu}, t \mbf{n}_{\mu} ) \\	
		& h_{\mu}^{t}{}^{\#}(v',v'') =  \left( v' - v'' \cdot  \big( \int_{t}^{1} \wtil{\lambda}_{\mu}^{\min}(\mbf{z}_{\mu}, t \mbf{n}_{\mu} dt) \big), t \cdot v''  \right) \\
	\end{align*}
	
	Then, we can clearly see that:
	
	\begin{itemize}
		\item{$H_{\mu}^{t} |_{t = 1} = \Id_{\uU_{\mu}}$, $H_{\mu}^{t}|_{t = 0 } = I_{\mu} P_{\mu}$}
		\item{$H_{\mu}^{t} \circ I_{\mu} = I_{\mu}$ for all $t \in [0,1]$. } 
	\end{itemize}
	Therefore, $\uU_{\mu}^{\min} \times j_{\0}(N_{\mu})$, $I_{\mu}$, $P_{\mu}$, $H_{\mu}^{t}$ defines a minimal model decomposition of $\uU_{\mu}$.
\end{proof}

	\begin{defn} \label{minimalatlas} \hfill \\
		Suppose that $\XX = (M,E,\lambda)$ is an analytic \qs space. We say that a dg atlas $\Phi$ for $\XX$ is a \emph{homotopy minimal atlas} if: 
		
		\begin{itemize}
			\item{There exists a dg chart $\Phi_{\mu}: \uU_{\mu} \hookrightarrow \XX$, for each $\mu \in X = \tau^{0}\XX$. That is, for each $\mu \in \tau^{0}\XX$, there exists an open neighbourhood $U_{M}^{\mu} \subset M$ containing $\mu$ with a dg chart $\Phi_{\mu}: \uU_{\mu} \iso \XX|_{U_{M}^{\mu}}$. }
			
			\item{
				For each dg-chart $\Phi_{\mu}: \uU_{\mu} \hookrightarrow \XX$, there is a chosen minimal model decomposition 
				
				\[
				\uU_{\mu} \cong \uU_{\mu}^{\min} \times \wh{N_{\mu}}
				\]
			}

		\end{itemize}
.
		
		That is, for each $\mu \in \tau^{0}\XX$, and dg chart $\Phi_{\mu}: \uU_{\mu} \hookrightarrow \XX$, we choose a minimal chart $I_{\mu}$ and projection $P_{\mu}$, and a homotopy $H^{t}_{\mu}$ between $\Id_{\uU_{\mu} }$ and $I_{\mu} P_{\mu}$, so that the triple $(I_{\mu}, P_{\mu}, H^{t}_{\mu})$  defines a minimal homotopy retract context.  \\
		
		With this definition, the atlas $\Phi$, along with its minimal model decompositions at each $\mu$, is so that the following diagram commutes:
		
		\[\begin{tikzcd}
			{\uU_{\mu}} && \XX & {} & {\uU_{\nu}} \\
			{\uU_{\mu}^{\min}} & {\uU_{\mu \nu}} && {\uU_{\nu \mu}} & {\uU_{\nu}^{\min}} \\
			{X_{\mu}} & {} & {} & {} & {X_{\nu}} \\
			&& {X_{\mu \nu}}
			\arrow[hook, from=1-1, to=1-3]
			\arrow[hook', from=1-5, to=1-3]
			\arrow[hook', from=2-1, to=1-1]
			\arrow[hook', from=2-2, to=1-1]
			\arrow[hook, from=2-2, to=1-3]
			\arrow[from=2-2, to=2-4, "\Phi_{\mu \nu}",  "\sim" swap]
			\arrow[hook', from=2-4, to=1-3]
			\arrow[hook, from=2-4, to=1-5]
			\arrow[hook, from=2-5, to=1-5]
			\arrow[hook', from=3-1, to=2-1]
			\arrow[hook, from=3-5, to=2-5]
			\arrow[hook', from=4-3, to=2-2]
			\arrow[hook, from=4-3, to=2-4]
			\arrow[hook', from=4-3, to=3-1]
			\arrow[hook, from=4-3, to=3-5]
		\end{tikzcd}\]
		
	\end{defn}

	Now, for any morphism of local analytic \qs spaces $\Phi_{\ups_{01}}: (\uU_{\ups_{01}}, \0_{\ups_{01} } ) \rightarrow (\uU_{\ups_{10}}, \0_{\ups_{1}})$, we have an induced commutative diagram:

	\[
	\begin{tikzcd} \label{eqn:mindiagram}
		\uU_{\ups_{01} }^{\min} \arrow[r, "I_{\ups_{0} }", hookrightarrow] \arrow[d, dotted, "\Phi_{\ups_{01}}^{\min}"] & \uU_{\ups_{01} } \arrow[d, "\Phi_{\ups_{01}}"] \\
		\uU_{\ups_{10}}^{\min}  \arrow[r, "P_{\ups_{1} }", leftarrow] & \uU_{\ups_{10}} 
	\end{tikzcd}
	\]

	\subsubsection*{Induced bundle constructions}
	A minimal model decomposition $\uU_{\mu} = \uU_{\mu}^{\min} \times \wh{N_{\mu}}$ as above gives a trivial linear polydisc bundle over the body $U_{\mu}^{\min}$ of the minimal model $\uU_{\mu}^{\min}$, as the decomposition 
	gives $U_{\mu} = U_{\mu}^{\min} \times \wh{N_{\mu}}$, for some polydisc $j_{\0}N_{\mu}$ centered at the origin of $N_{\mu}$.  We may refer to this linear polydisc bundle over $U_{\mu}^{\min}$ with the notation
	
	\[
	\wh{N}_{U_{\mu}^{\min}} := U_{\mu}^{\min} \times \wh{N_{\mu}}  \xrightarrow{P_{\mu}} U_{\mu}^{\min}
	\]

	From now on, we will always assume that polydisc fiber bundles are all \textbf{linear} polydisc bundles, so that we may drop the adjective ``linear" in our terminology.\\

	\begin{ex}
		Suppose that $M = \CC^{n}$,  and $S: M \rightarrow \CC$ is an analytic function of the form:
		
		\[
		S = z_{1}^{2} + \cdots + z_{d}^{2} + p(z_{d+1}, \cdots, z_{n}) 
		\]
		\noindent where $p$ is an analytic function in degrees at least $3$. \\
		
		Then, we let $\XX = \dCrit(S) = (M, \Omega_{M}, dS)$, so that $\tau^{0}(\XX) = X = \Crit(S)$.  \\

		We have a decomposition $M = \CC^{d} \times \CC^{n-d}$, which also gives us a decomposition $\Omega_{M} \cong \Omega_{\CC^{d}} \boxplus \Omega_{\CC^{n-d}}$. Note that the function $S$ splits into $q_{d}(z_{1}, \cdots, z_{d}) = z_{1}^{2} + \cdots z_{d}^{2}$ and $p(z_{d+1}, \cdots z_{n})$, which is an analytic function that starts in cubic degrees. This gives us a section $dQ \boxplus dp$ of the split bundle $\Omega_{\CC^{d}} \boxplus \Omega_{\CC^{n-d}}$, giving a decomposition:
		
		\[
		\dCrit(S) \cong \dCrit(Q) \times \dCrit(p)
		\]
		
		This is a global minimal model decomposition. One can check that $\dCrit(p)$ is a minimal local analytic \qs space with base point $\0_{\CC^{n}}$, and $\dCrit(Q)$ is a contractible analytic \qs space. \\
		
		We can interpret the quasi-isomorphism between the minimal model $\dCrit(p) \hookrightarrow \dCrit(S)$ as describing a \emph{\qs equivalence} between $p(z_{d+1}, \cdots, z_{n})$ and $S = p(z_{d+1}, \cdots, z_{n} ) + z_{1}^{2} + \cdots z_{d}^{2}$
	\end{ex}

	For general analytic functions $S: M \rightarrow \CC$, we have:
	
	\begin{lem} \emph{(Morse-Thom splitting lemma \cite{Greuel2006})}\\ \label{morselemma}
		Given an analytic manifold $M$ and an analytic function $S: M \rightarrow \CC$, suppose that $x \in M$ is a critical point for $S$. Then, for $d = \rank(H(S)(x))$, there exists an analytic chart $\phi_{x}: U_{x} \xrightarrow{\sim} U_{M}^{x} = \phi(U_{x})$ around $x$ such that 
		
		\[
		S \circ \phi_{x}( z_{1}, \cdots, z_{n}) = S^{(2)}(z_{1},\cdots,z_{d}) + S^{(\geq 3)}(z_{d+1}, \cdots, z_{n})
		\]
		with $S^{(2)}$ being a non degenerate quadratic form vanishing at the origin, and $S^{(\geq 3)}$ is an analytic function vanishing at the origin in degrees $3$ or higher. Such a decomposition is unique up to equivalence.
	\end{lem}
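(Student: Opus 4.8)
The plan is to pass to a local chart and run the holomorphic Gromoll--Meyer splitting argument. First I would choose any analytic chart centred at $x$, identifying $S$ with a convergent power series on a polydisc $\DDelta^{n}(\0,\rho)$ with $S(\0)=0$ and $dS(\0)=0$. The degree-$2$ part of this series is the Hessian form $H(S)(x)$; over $\CC$ every quadratic form is linearly diagonalizable, so after a linear change of coordinates the quadratic part becomes $z_{1}^{2}+\cdots+z_{d}^{2}$ with $d=\rank H(S)(x)$. Writing $x=(z_{1},\dots,z_{d})$, $y=(z_{d+1},\dots,z_{n})$, the Hessian of $S$ at $\0$ is then block diagonal with blocks $2I_{d}$ and $0$; in particular its $x$-$x$ block is invertible while the mixed and $y$-$y$ blocks vanish.

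Next I would peel off the non-degenerate directions. By the analytic implicit function theorem the equation $\partial_{x}S(x,y)=\0$ has a unique small analytic solution $x=\varphi(y)$ with $\varphi(\0)=\0$, and since the mixed Hessian block vanishes one checks $\varphi(y)=O(|y|^{2})$, so the translation $x\mapsto x+\varphi(y)$ is an analytic automorphism germ with trivial linear part; after it, $\partial_{x}S(\0,y)\equiv\0$. Setting $g(y):=S(\0,y)$, Hadamard's lemma writes $S(x,y)-g(y)=\sum_{i,j}x_{i}x_{j}\,h_{ij}(x,y)$ with $h_{ij}$ analytic, which we may take symmetric, and $\big(h_{ij}(\0,\0)\big)=I_{d}$ by the normalization above. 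The symmetric matrix $\big(h_{ij}(x,y)\big)$ equals $I_{d}$ at the origin, hence has an analytic symmetric square root $B(x,y)$ on a shrunk polydisc; the change of coordinates $x'=B(x,y)\,x$, $y'=y$ has invertible linear part, fixes $\0$, leaves the $y$-coordinates untouched, and turns $\sum x_{i}x_{j}h_{ij}$ into $(x'_{1})^{2}+\cdots+(x'_{d})^{2}$. Composing the three coordinate changes yields the chart $\phi_{x}$ with $S\circ\phi_{x}=z_{1}^{2}+\cdots+z_{d}^{2}+g(z_{d+1},\dots,z_{n})$; and since the degree-$2$ part of $S$ involved only $z_{1},\dots,z_{d}$ and none of the three changes introduces a quadratic term in $y$, the restriction $g$ has vanishing constant, linear and quadratic parts, i.e. $g=S^{(\geq 3)}$.

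For uniqueness up to equivalence I would note that $d=\rank H(S)(x)$ is a coordinate-free invariant, so two splittings have the same number of Morse directions, and that any non-degenerate quadratic form in $d$ complex variables is linearly equivalent to $z_{1}^{2}+\cdots+z_{d}^{2}$, so the quadratic parts agree up to linear equivalence; for the residual parts, composing the two charts produces a biholomorphism germ intertwining the two normal forms, and the standard finite-determinacy/stable-equivalence argument (cf. \cite{Greuel2006}) then shows the residual germs $(\CC^{n-d},\0)\to(\CC,\0)$ are right-equivalent — this last point I would simply cite. The main obstacle is the holomorphic parametrized diagonalization of the $y$-family of quadratic forms $\sum x_{i}x_{j}h_{ij}(x,y)$: it must be done analytically, not merely pointwise, which is precisely what analyticity of the matrix square root near $I_{d}$ provides; the accompanying bookkeeping — that each coordinate change is a genuine biholomorphism on a suitably shrunk polydisc, that it preserves the base point and the $y$-directions, and that $g$ really lands in degrees $\geq 3$ — is routine but must be carried out with some care.
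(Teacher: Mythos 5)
The paper does not prove this lemma at all: it is quoted as a known result from the singularity-theory literature (\cite{Greuel2006}), so there is no in-text argument to compare against. Your proposal is a correct, essentially self-contained proof of the existence part by the standard holomorphic Gromoll--Meyer argument: linear diagonalization of the Hessian, the implicit-function-theorem shear $x\mapsto x+\varphi(y)$ killing $\partial_{x}S$ along the degenerate directions (with the observation $\varphi(y)=O(|y|^{2})$ correctly ensuring the quadratic part is untouched, so that the residual germ $g$ really starts in degree $3$), the second-order Hadamard expansion, and the analytic symmetric square root of the matrix family $\bigl(h_{ij}(x,y)\bigr)$ near $I_{d}$ to normalize the fiberwise quadratic forms. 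All of these steps check out, and the only point you defer to the reference --- right-equivalence of the residual germs, i.e.\ uniqueness of the splitting up to equivalence --- is exactly the part for which a citation is appropriate, so your treatment is if anything more complete than the paper's. One cosmetic remark: what you invoke is really the second-order Taylor formula with integral remainder (valid because both $S(\0,y)-g(y)$ and $\partial_{x}S(\0,y)$ vanish identically after the shear) rather than the first-order Hadamard lemma, but the statement you use is correct as written.
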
 
	
	Thus, for the analytic \qs space $\dCrit(S)$, around a point $x \in \tau^{0} \dCrit(S)$, we can find an open neighbourhood $U_{x}$ and a minimal model decomposition:
	
	\[
	\dCrit(S) \cong  \dCrit(  S^{(\geq 3)} ) \times \dCrit(S^{(2)}) 
	\]

\subsection*{Dg analytic spaces}
We will find the need to expand our model for derived spaces to more general differential graded spaces. In studying derived geometric spaces arising from $L_{\infty}$-algebras, we will run into differential graded analytic spaces that may not be quasi-smooth.\\

There is essentially an equivalence between \emph{positively graded} convergent $L_{\infty}$-algebras and local differential graded analytic spaces. We will recall that a local \qs space is essentially equivalent to the data of the analytic function on a polydisc. This perspective underpins the general idea behind the equivalence between local dg analytic spaces and positively graded $L_{\infty}$-algebras. When an analytic space $X$ has a more complicated obstruction theory (for example, not given by any vector bundle), a differential graded analytic space which provides a derived enhancement is no longer locally determined by the data of an analytic function, but instead by a homological generalization known as an $L_{\infty}$-algebra.

Briefly speaking, a differential graded analytic space is given by a smooth analytic space with a structure sheaf of commutative algebras enriched in chain complexes. While local analytic spaces (or ``affine analytic spaces") are defined via commutative algebras, affine differential graded analytic spaces are modelled on commutative differential graded algebras (which are commutative algebras in chain complexes).

\begin{defn}
	A differential graded analytic space is a $\CC$-ringed space $(M, \O^{\bullet}_{M})$ where:
	
	\begin{itemize}
		
		\item{$M$ is a smooth analytic space }
		\item{ $(\O^{\bullet}_{M}, q_{M})$ is a non-positively graded coherent sheaf of analytic cdgas over $M$}
	\end{itemize}
\end{defn}
Given a dg analytic space, the analytic space $X$ defined by the coherent sheaf of ideals $\Im( q^{-1}_{M}: \O_{M}^{-1} \rightarrow \O_{M}^{0} )$ is called  the \emph{classical truncation} of $(M , \O_{M}^{\bullet})$. Furthermore, we have that the structure sheaf of $X$ as an analytic space is given by $H^{0}(\O_{M}^{\bullet})$ 

\begin{defn}
	A  local model dg analytic space is given by a model complex space $U$ (for example, a polydisc in some $\CC^{n}$), and a cdga $[ \cdots \rightarrow A^{-1} \xrightarrow{q^{-1} } A^{0} ]$ with $A^{0} = \Gamma(\O_{U}) = \CC[U]$.
\end{defn}

Thus, we can easily see that $H^{0}(A) = \CC[ U ] / \im(q|_{A^{-1}} )$ describes the coordinate ring for a local model analytic space embedded a smooth complex space $U$. This is the coordinate ring of the associated classical truncation, which is the analytic space given by the vanishing locus of the ideal of analytic functions $\im(q|_{A^{-1}} )$ in $U$.

	\section{The geometry of analytic $L_{\infty}$-algebras}

	In this section, we introduce the concept of an $L_{\infty}$-algebra. The purpose of section is primarily expositional, and to provide the reader with some important context. While the subject of $L_{\infty}$-algebras is already well-established, the interested reader may sometimes find it difficult to find resources that illuminate some of the more geometric ideas underlying $L_{\infty}$-algebras.\\
	
	Our main source of differential graded geometry will come out of $L_{\infty}$-algebras. Where germs of analytic spaces are essentially described by the Taylor expansions of the defining analytic equations, $L_{\infty}$-algebras encode germs or local models of dg analytic spaces. In particular, $L_{\infty}$-algebras that encode local models of quasi-smooth analytic spaces are indeed equivalent to describing the Taylor expansions of the corresponding defining equations. We will introduce the notion of a \emph{local analytic} $L_{\infty}$-algebra, or an $\cL_{\infty}$-algebra: these are $L_{\infty}$-algebras that encode \emph{convergent} germs of dg analytic spaces. \\ 
	
	In this section, we also introduce the notion of \emph{gauge-fixing data} for local analytic $L_{\infty}$-algebras. Dating back to the work of Kuranishi in deformations of complex manifolds, the terminology ``Kuranishi family" has been widely adopted to describe local moduli spaces as encoded by the cohomology of an $L_{\infty}$-algebra which governs the deformation theory of some objects. In this paper, we restrict our attention to a class of $\cL_{\infty}$-algebras see definition \ref{quasismoothLinftybundles}).  We refer to as \emph{quasi-smooth} $\cL_{\infty}$-algebras. 
	These can be viewed as a ``homotopical gluing" of quasi-smooth dg analytic spaces obtained from minimal $L_{\infty}$-algebras (i.e. cohomology $L_{\infty}$-algebras, as in Kuranishi theory).

	\subsection{$L_{\infty}$-algebras and dg formal neighbourhoods}
	
	All the content in this subsection is standard, and included here for exposition. For a wonderful reference on $L_{\infty}$-algebras, including the description of a pointed formal neighbourhood cdga, see \cite{Manetti2022}.
	
	Given a dg analytic space $\XX = (M, \O_{M}^{\bullet})$, we define the formal neighbourhood at a point $\mu$ in the classical truncation by taking the formal completion of $\O_{M}^{\bullet}$ at $\mu$. That is, 
	
	\[
	\O_{M, \mu}^{\bullet} := (\wh{\Sym}_{\CC}(V_{\mu}[1]^{\vee}), q_{\mu})
	\]
	where $T_{\mu} \XX  = V_{\mu}$ is a positively graded vector space (so that $V_{\mu}[1]^{\vee}$ is in non-positive degrees) and $q_{\mu}$ is a degree $1$ differential. The differential $q_{\mu}$ is defined by taking the formal completion of $q$ at $\mu$ -- in other words, it is given by the Taylor expansion of $q$ at $\mu$.  In particular, we have that $q_{\mu}$ is a degree $1$ derivation on $\wh{\Sym}_{\CC}(V_{\mu}[1]^{\vee})$ satisfying $q_{\mu}^{2} = 0$.\\ 
	
	Recall that a degree $1$ derivation on $\wh{\Sym}_{\CC}(V_{\mu}[1]^{\vee})$ is uniquely determined by a collection of multilinear maps of degree $1$:
	
	\begin{align*}
	& q_{\mu}^{(1)}: V_{\mu}[1]^{\vee} \rightarrow V_{\mu}[1]^{\vee}  \\
	& q_{\mu}^{(2)}: V_{\mu}[1]^{\vee}  \rightarrow  V_{\mu}[1]^{\vee} \odot V_{\mu}[1]^{\vee} \\
	& \cdots \\
	& q_{\mu}^{(k)}: V_{\mu}[1]^{\vee} \rightarrow  \odot^{k}  V_{\mu}[1]^{\vee}   \\
	& \cdots
	\end{align*}
	
	satisfying the equation $q_{\mu}^{2} = 0$, where $q_{\mu} = \sum\limits_{k = 1}^{\infty} q_{\mu}^{(k)}$. If we take the dual and de-shift appropriately, we can equivalently write this as a sequence of graded skew-symmetric multilinear operations
	
	\begin{align*}
	&  l_{1}^{\mu} := (q_{\mu}^{(1)})^{\vee}:  V_{\mu}^{\bullet} \rightarrow V_{\mu}^{\bullet }[1]  \\
	& l_{2}^{\mu} := (q_{\mu}^{(2)} )^{\vee} : V_{\mu}^{\bullet} \wedge V_{\mu}^{\bullet}  \rightarrow V_{\mu}^{ \bullet }\\
	& \cdots \\
	& l_{k}^{\mu} := (q_{\mu}^{(k)} )^{\vee} : \bigwedge^{k} V_{\mu}^{\bullet}  \rightarrow V_{\mu}^{\bullet }[2-k] \\
	& \cdots
	\end{align*}
	which satisfy some equations amounting to the dual of the equations set by $q_{\mu}^{2} = 0$. In the dual form, the equations may look more familiar -- for example, if we write $d = l_{1}^{\mu}$, $[-,-]_{2} := l_{2}^{\mu}$, and $[-,-,-]_{3} := l_{3}^{\mu}$, then we get the equation
	
	\[
	[x,[y,z]_{2}]_{2} + [[x,y]_{2},z]_{2} + [z, [x,y]_{2}]_{2} = d( [x,y,z]_{3} ) = 0
	\]
	
	That is, $[-,-]_{2}$ defines a skew-symmetric ``bracket" on $V_{\mu}$, which satisfies the Jacobi identity up to a coboundary term specified by the triple bracket $[-,-,-]_{3}$.  From this point of view, we see why an $L_{\infty}$-algebra is referred to as a \emph{(strong) homotopy 
	Lie algebra}. Now, note that the dg formal neighbourhood construction above does not depend on the fact that $V_{\mu}^{\bullet}$ is \emph{positively} graded, and makes sense if $V_{\mu}^{\bullet}$ is $\ZZ$-graded. Therefore, we define:

	\begin{defn} ($L_{\infty}$\textbf{-algebras})\\
	Suppose that $L^{\bullet}$ is a $\ZZ$-graded vector space. Then, an $L_{\infty}$ structure over $L^{\bullet}$ is defined to be a degree $1$ derivation $q$ on $\wh{S}(L[1]^{\vee}) = \wh{\Sym}(L[1]^{\vee})$  such that $q^{2} = 0$. This cdga is often referred to as the (cohomological) Chevalley-Eilenberg cdga of $L$, and is often denoted in the literature as $\CE^{\bullet}(L) := \Big( \wh{S}(L[1]^{\vee}), q \Big)$
	\end{defn}

	Given our discussion on formal neighbourhood cdgas of dg analytic spaces, this is a fairly natural way to define an $L_{\infty}$-algebra. We remark that one can make a number of equivalent definitions. Firstly, an $L_{\infty}$-algebra is more typically defined directly through describing the operations $l_{k}^{\mu}: \bigwedge^{k} V_{\mu} \rightarrow V_{\mu}$, instead of describing them through their duals $q_{\mu}^{(k)}$.

	\begin{defn}\textbf{(Alternative definition  1) }\\
	Suppose that $L^{\bullet}$ is a $\ZZ$-graded vector space. Then, an $L_{\infty}$ structure over $L^{\bullet}$ is given by a sequence of skew-symmetric multilinear maps of degree $2-n$,
	
	\[
	l_{n} : \bigwedge^{n}  L^{\bullet}  \rightarrow L^{\bullet}
	\]
	satisfying the (higher) homotopy Jacobi identities:
	
	\[
	\sum \sum \pm l_{j}(l_{k}(v_{\sigma(1)}, \cdots, v_{\sigma(k)} ), v_{\sigma(k+l)}, \cdots, v_{\sigma(n) } ) = 0
	\] 
	\end{defn}
	
	By considering graded coalgebras instead of completed symmetric algebras, we can do away with the need for a completion and define an $L_{\infty}$-algebra as below.
	
	\begin{defn}\textbf{(Alternative definition 2)}	\\
	Suppose that $L^{\bullet}$ is a $\ZZ$-graded vector space. Then, an $L_{\infty}$ structure over $L^{\bullet}$ is defined to be a degree $1$ co-derivation $Q$ on the free symmetric coalgebra $S^{c}(L[1])$, such that $Q^{2} = 0$.	
	\end{defn}

	With any of the above equivalent definitions, we say that the data $(L^{\bullet}, l_{1}, l_{2}, \cdots, l_{k} , \cdots )$ defines an $L_{\infty}$-algebra. \\

	One usually thinks of $\wh{\Sym}(V_{\mu}[1]^{\vee})$ as the cdga of formal functions (i.e. given by arbitrary power series) on a pointed formal neighbourhood with base point given by the origin $\0_{\mu}$. Then, as $q_{\mu}$ is a derivation acting on $\wh{\Sym}(V_{\mu}[1]^{\vee})$, one can view $q_{\mu} = \sum\limits_{ k \geq 1} q^{(k)}_{\mu}$ as a formal vector field acting on these functions, that is of homological degree $1$.  In light of this interpretation, the dual $L_{\infty}$ operations $\{ l_{k}^{\mu} \}_{k=1}^{\infty}$ can be viewed as the Taylor series coefficients of this formal vector field $q_{\mu}$.  Furthermore, we can easily see that $H^{0}( \wh{\VV}_{\mu} ) = \wh{\Sym}(V_{\mu}^{1}[1]^{\vee}) / \im( q_{\mu}: \wh{\VV}_{\mu}^{-1} \rightarrow \wh{\VV}_{\mu}^{0} )$ can be identified with the formal germ of the vanishing locus of the vector field $q_{\mu}|_{\wh{\VV}_{\mu}^{-1} }$ at the base point $\0_{\mu}$.\\
	
	With the Koszul dual, formal pointed dg space description of $L_{\infty}$-algebras, there is a natural way one can define morphisms between $L_{\infty}$-algebras. 
	
	\begin{defn}
	Suppose that $L_{\mu}$ and $L_{\nu}$ are $L_{\infty}$-algebras. We define a morphism $\phi: L_{\mu} \rightarrow L_{\nu}$ of $L_{\infty}$-algebras to be given by a morphism of cdgas 
	
	\[
	\phi_{\sharp}: \CE^{\bullet}(L_{\nu} ) \rightarrow \CE^{\bullet}(L_{\mu})
	\]
	
	such that $\phi_{\sharp} \circ q_{\nu} = q_{\mu} \circ \phi_{\sharp}$.
	\end{defn}
	
	By the universal property of the symmetric algebra, such a morphism $\phi^{\sharp}:  \calS(L_{\nu} ) \rightarrow \calS(L_{\mu})$ is uniquely determined by a sequence of maps
	
	\begin{align*} 
	& \phi_{\sharp}^{(1)}: L_{\nu}[1]^{\vee} \rightarrow L_{\mu}[-1]^{\vee} \\
	& \phi_{\sharp}^{(2)}: L_{\nu}[1]^{\vee} \rightarrow L_{\mu}[1]^{\vee} \odot  L_{\mu}[1]^{\vee} \\
	& \cdots \\
	& \phi_{\sharp}^{(k)}: L_{\nu}[1]^{\vee} \rightarrow \odot^{k}  L_{\mu}[1]^{\vee}  \\
	& \cdots
	\end{align*}
	
	or dually, by a sequence of maps $\phi^{k}: \bigwedge^{k} L_{\mu}  \rightarrow L_{\nu}$.  Dualizing the condition that $ \phi_{\sharp} \circ q_{\nu} = q_{\mu} \circ \phi_{\sharp}$, we obtain a more familiar form of the definition of the morphism between $L_{\infty}$-algebras.
	
	\begin{defn}
	We say that a morphism $\phi: L_{\mu} \rightarrow L_{\nu}$ of $L_{\infty}$-algebras is a quasi-isomorphism, if the underlying map $\phi_{\sharp}: \CE^{\bullet}(L_{\nu}) \rightarrow \CE^{\bullet}(L_{\mu})$ of cdgas is a quasi-isomorphism.
	\end{defn}
	
	If $V_{\mu}$ is a $\CC$-vector space, then we can speak of the convergence of the formal vector field $q_{\mu}|_{\wh{\VV}_{\mu}^{-1} }$. If $q_{\mu}|_{\wh{\VV}_{\mu}^{-1} }$ converges in some polydisc around the origin, then the vector field $q_{\mu}|_{\wh{\VV}_{\mu}^{-1} }$ defines a genuine local model dg analytic space over a polydisc of convergence. Of course, the convergence of $q_{\mu}|_{\wh{\VV}_{\mu}^{-1} }$ on some polydisc can be guaranteed by a boundedness condition on the growth of the Taylor coefficients given by the $l_{k}^{\mu}$. These $L_{\infty}$-algebras will be referred to as \emph{analytic $L_{\infty}$-algebras}. We will be primarily interested in such $L_{\infty}$-algebras, as we are interested in constructing differential graded analytic spaces that are not only formal. If $q_{\mu}|_{\wh{\VV}_{\mu}^{-1} }$ converges, then the $L_{\infty}$-algebra in question encodes the (non-formal) germ of the analytic space defined by the equation:
	
	\[
	F_{\tMC}(\mbf{z}) = \sum\limits_{k=1}^{\infty} \frac{1}{k!} l_{k}^{\mu}(\mbf{z}) = 0
	\]
	
	This known as the \emph{Maurer-Cartan equation} for the (convergent) $L_{\infty}$-algebra $(V_{\mu}, \{ l_{k}^{\mu} \}_{k=1}^{\infty})$, and $F_{MC}$ is known as the Maurer-Cartan function.\\
	
	We will see in the next subsection that germs of analytic differential graded spaces can \emph{always} be described by an appropriate convergent $L_{\infty}$-algebra. To recover the classical germ of an analytic space, we can consider the Maurer-Cartan equation associated to the $L_{\infty}$-algebra. This is essentially a feature obtained by using \emph{differential graded} objects to model derived geometry.

	\subsection{Analytic \texorpdfstring{$L_{\infty}$}{L-infinity}-algebras and $\cL_{\infty}$-algebras }

	\begin{defn} \label{analytic}
	An $L_{\infty}$-algebra $L$, whose underlying vector space $(L^{\natural}, \| \cdot \|)$ is normed, is \emph{analytic} if there exists a constant $C > 0$, independent of $k$ such that
	
	\[
	\| l_{k} \| < k! \cdot C^{k}
	\]
	
	for all $k$, where $l_{k}$ are the structure operations for $L$. Alternatively, we may say that $L$ and stucture maps $l_{k}$ are \emph{bounded}. 
	\end{defn}

	Note that we do not explicitly state that $L$ is finite dimensional, although finite dimensional analytic $L_{\infty}$-algebras will be of primary concern to us. However, we will need to consider infinite dimensional $L_{\infty}$-algebras in the forthcoming sequel to this paper -- when we apply the general ttheory to the setting of Chern-Simons theory. In the infinite dimensional setting, our $L_{\infty}$-algebras will be modelled on Fr\'echet spaces. 
	
	\begin{defn}
	An $L_{\infty}$-algebra $L$, such that the underlying graded vector space is a Fr\'echet space $(L^{\natural}, \{ | \cdot |_{n} \}_{n \in \nn_{0}} )$, is (Fr\'echet) analytic if there exist finitely many semi-norms $| \cdot |_{i_{1}} , \cdots, | \cdot |_{i_{n}}$  along with constants $C_{i_{j}} \geq 0$ for $j = 1, \cdots, n$, such that for each $i_{j}$ we have that 
	
	\[
	| l_{k} |_{i_{j}} < k! \cdot  C_{i_{j}}^{k}
	\]
	for all $k$.  
	\end{defn}

	\begin{rmk}
	As the operator norm and the dual operator norm satisfy $\| T \| = \| T^{\vee} \|$ for $T : V \rightarrow V$, we can formulate the analytic condition of an $L_{\infty}$-algebra dually with the same conditions. That is, $L^{\bullet},  \{ l_{k} \}$ is an analytic $L_{\infty}$-algebra, if there exists a constant $C$ such that for each $k$ we have that 
	
	\[
	\| q^{k} \| < k! \cdot C^{K}
	\]
	
	where $q^{k} = l_{k}^{\vee}$. 
	Thus, $q^{k}$ is bounded if and only if $l_{k}$ is bounded.
	\end{rmk}
	
	We will simply use the terminology ``analytic $L_{\infty}$-algebra" to refer to either scenario. In the infinite dimensional setting, an analytic $L_{\infty}$-algebra will always be required to be a Fr\'echet analytic $L_{\infty}$-algebra. Of course, the finite dimensional definition is a special case of the more relaxed Fr\'echet definition, where the underlying vector space only has a single norm generating its topology. \\

	We would like to work with analytic $L_{\infty}$-algebras, to guarantee that the Maurer-Cartan function converges in some open domain in $L^{1}$. 
	
	\begin{defn} 	
	The Maurer-Cartan function for an analytic $L_{\infty}$-algebra is:
	
	\[
	F_{\tMC}(x) = \sum \limits^{\infty}_{k=1} \frac{1}{k!} l_{k}(x, \cdots, x) 
	\]
	\end{defn}

	\begin{defn} (Convergence domains and local analytic $L_{\infty}$-algebras)\\
	Suppose that $L$ is an analytic $L_{\infty}$-algebra. 
	\begin{itemize}
		\item{ 
			If $U \subset L^{1}$ is an open (convex) subset on which the Maurer-Cartan function $F_{MC}$ converges, then we refer to $U$ as a \emph{convergence domain}. }
		\item{ We say that the pair $(L,U)$ is a local analytic $L_{\infty}$-algebra, if $U$ is a convergence domain for $L$.  }
		\item{ We write $\tMC(L,U) $ to be the solutions of the local Maurer-Cartan equation $F_{MC}(x) = 0$ for $x \in U \subset L^{1}$. Elements $\mu \in \tMC(L,U)$ are known as \emph{local Maurer-Cartan elements}. }
	\end{itemize}
	
	\textbf{As a matter of convienence, we also refer to a local analytic $L_{\infty}$-algebra as an $\cL_{\infty}$-algebra.}
	\end{defn}

	\begin{rmk}(The Maurer-Cartan equation in the ``dual picture")\\
	Recall that the $L_{\infty}$ operations $\{ l_{k} \}_{k = 1}^{\infty}$ can be interpreted as being dual to the Taylor coefficients of the defining homological vector field $q_{\mu}$. That is, $l_{k}^{\vee} = q^{(k)}: L[1]^{\vee} \rightarrow \bigodot^{k} L[1]^{\vee}$, and each $q^{(k)}$ extends uniquely (via the Leibiniz rule) to a derivation $q^{(k)}: \wh{S}(L[1]^{\vee}) \rightarrow \wh{S}(L[1]^{\vee})$. Recall that we write $q = \sum q^{(k)}$, which is a derivation on $\wh{S}(L[1]^{\vee})$. \\ 
	
	For $\alpha \in L[1]^{\vee}$ of degree $0$ (i.e. $\alpha \in (L^{1})^{\vee}$) one has $e^{\alpha} = \sum\limits_{m = 0}^{\infty} \frac{1}{m!} \alpha^{\odot m} \in \wh{S}(L[1]^{\vee})$. Then, one can dually describe the Maurer-Cartan locus as 
	
	\[
	\tMC^{*}\Big( \Ss(L) \Big) = \{ \alpha \in (L[1]^{\vee})^{0}  \mid q( e^{\alpha} ) = 0 \}
	\]	
	
	One can show that there is an analytic bijection between $\tMC^{*}\Big( \Ss(L)  \Big)$ and $\tMC(L)$. In finite dimensions, we can simply take the duals of $\alpha \in \tMC^{*}\Big( \Ss(L) \Big)$ to produce $x \in \tMC(L)$ and vice versa.

	\end{rmk}

	If $L$ is analytic, then we can consider the Taylor expansion of $q$ centered at a nearby $b \in L^{1}$. The corresponding Taylor coefficients will be given by the following:
	
	\begin{defn}
	For $b \in U \subset L^{1}$, we can perturb the structure operations by $b$ to define twisted $L_{\infty}$ operations:
	
	\[
	l_{k}^{b}(x_{1}, \cdots x_{k}) = \sum\limits_{j = 0}^{\infty} \frac{1}{j!} l_{j + k} (b^{ \odot j}, x_{1}, \cdots x_{k})
	\]
	\end{defn}
	
	\begin{lem}
	
	If $b \in U$ as above, then these operations pointwise converge and are well defined. If $b = \mu \in \tMC(L,U)$ then the operations $\{ l_{k}^{\mu} \}$ form an $L_{\infty}$ structure on the graded vector space $L^{\natural}$.
	\end{lem}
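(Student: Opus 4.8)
The plan is to treat the two assertions separately: pointwise convergence of the twisted operations by a majorant estimate, and the $L_\infty$ relations for a Maurer--Cartan twist by the standard twisting argument carried out in the Chevalley--Eilenberg picture.

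For convergence, fix $b\in U$ and $x_1,\dots,x_k\in L$, and note first that every summand $l_{j+k}(b^{\odot j},x_1,\dots,x_k)$ has the same degree $2-k$, so the series at least makes sense formally. The analyticity bound $\|l_{j+k}\|<(j+k)!\,C^{j+k}$ gives
\[
\Big\|\tfrac1{j!}\,l_{j+k}(b^{\odot j},x_1,\dots,x_k)\Big\|
\;<\;\frac{(j+k)!}{j!}\,C^{j+k}\,\|b\|^{\,j}\prod_{i=1}^k\|x_i\|
\;=\;k!\,C^k\binom{j+k}{k}(C\|b\|)^{j}\prod_{i=1}^k\|x_i\| .
\]
Since $\sum_{j\ge0}\binom{j+k}{k}t^{j}=(1-t)^{-(k+1)}$ for $|t|<1$, the defining series is absolutely convergent, hence convergent by completeness, once $C\|b\|<1$, with $\|l_k^{b}\|\le k!\,C^{k}(1-C\|b\|)^{-(k+1)}$, and $l_k^b$ then inherits graded skew-symmetry and degree $2-k$ from $l_{j+k}$. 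The right-hand side is the $k$-th derivative at $t=\|b\|$ of the geometric majorant $t\mapsto(1-Ct)^{-1}$, which is exactly why the Taylor re-expansion of $F_{\tMC}$ about an interior point of its convergence domain converges; for a general convergence domain $U$ one invokes this re-expandability of convergent analytic functions, and in the Fréchet case one runs the identical estimate for each of the finitely many relevant seminorms. Absorbing the factor $(1-C\|b\|)^{-1}\le(1-C\|b\|)^{-k}$ also yields $\|l_k^{b}\|\le k!\,(C')^{k}$ with $C'=C/(1-C\|b\|)^{2}$, so $\{l_k^{b}\}$ is again analytic; this is not needed for the statement but will be convenient later.

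For the $L_\infty$ relations, take $b=\mu\in\tMC(L,U)$ and pass to $\CE^{\bullet}(L)=(\wh S(L[1]^{\vee}),q)$ with $q=\sum_{k\ge1}q^{(k)}$ and $q^{2}=0$. Translation by $\mu$ is a degree-$0$ algebra automorphism $\theta_{\mu}$ of $\wh S(L[1]^{\vee})$ --- well defined as a convergent operation on a polydisc about the origin precisely because $\mu$ lies in a convergence domain, by the estimate above --- and its conjugate $q^{\mu}:=\theta_{\mu}^{-1}\circ q\circ\theta_{\mu}$ (the expansion of the homological vector field $q$ about $\mu$) is again a degree-$1$ derivation with $(q^{\mu})^{2}=\theta_{\mu}^{-1}q^{2}\theta_{\mu}=0$. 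Expanding $q\circ\theta_{\mu}$ and reading off Taylor coefficients about the origin identifies $q^{\mu}=\sum_{k\ge0}q^{(k),\mu}$ with the dual of the family $\{l_k^{\mu}\}_{k\ge0}$ defined above; equivalently, regrouping $l_k(\mu+x,\dots,\mu+x)$ by the number of $x$-entries yields the generating identity $\sum_{k\ge0}\tfrac1{k!}\,l_k^{\mu}(x^{\odot k})=F_{\tMC}(\mu+x)$, whose constant term (the $k=0$ summand) is $l_0^{\mu}=F_{\tMC}(\mu)$. Since $\mu\in\tMC(L,U)$ this constant term vanishes, so $q^{\mu}$ carries $L[1]^{\vee}$ into the augmentation ideal of $\wh S(L[1]^{\vee})$; being a degree-$1$, square-zero derivation with no curvature, $q^{\mu}$ is the Chevalley--Eilenberg differential of an (uncurved) $L_\infty$ structure on $L^{\natural}$, which by construction is $\{l_k^{\mu}\}_{k\ge1}$. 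Alternatively one verifies the higher Jacobi identities for the $l_k^{\mu}$ by hand: they reorganize into the higher Jacobi identities for the $l_k$ with extra $\mu$-inputs, the would-be curvature contributions being $F_{\tMC}(\mu)=0$.

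I expect the main obstacle to lie in the bookkeeping of the second part: justifying the interchange of the several infinite summations when $q\circ\theta_{\mu}$ is re-expanded (or when the Jacobi identities are checked termwise), and tracking Koszul signs through the regrouping. The first is controlled by the absolute convergence established in the convergence step (and, in the Fréchet setting, by applying it seminorm-wise), and the second is mild because $\mu\in L^{1}$ becomes an even, degree-$0$ element after the shift to $L[1]$, so inserting copies of $\mu$ introduces no signs; the conjugation identity $(q^{\mu})^{2}=0$ itself is purely formal.
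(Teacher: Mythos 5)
Your proposal is correct, and it does substantially more than the paper, which offers no argument at all: the text simply asserts that the lemma "can be seen by a direct verification of the Jacobi axioms" and defers to an external reference. Your two-part argument supplies exactly the content being outsourced. The convergence half is the piece the paper's formal-algebraic citation cannot cover in the analytic setting, and your majorant computation is the right one: the bound $\|l_{j+k}\|<(j+k)!\,C^{j+k}$ combined with $\tfrac{(j+k)!}{j!}=k!\binom{j+k}{k}$ and the negative binomial series gives absolute convergence for $C\|b\|<1$ together with the re-analyticity estimate $\|l_k^b\|\le k!\,(C')^k$, which the paper will implicitly need later when it twists and re-twists. (The only caveat, which you flag yourself, is that the paper's notion of convergence domain is defined via convergence of $F_{\tMC}$ rather than via $C\|b\|<1$, so strictly one should either shrink $U$ or invoke re-expandability of the convergent series about interior points.) For the algebraic half, the paper's intended route is the termwise verification of the higher Jacobi identities with the curvature contributions killed by $F_{\tMC}(\mu)=0$ — your "alternative" paragraph — whereas your primary argument via conjugating the homological vector field $q$ by the translation automorphism $\theta_\mu$ is cleaner and matches the paper's own Koszul-dual philosophy (it is literally the "Taylor expansion of $q$ centered at $\mu$" the paper describes informally). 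The conjugation route buys a one-line proof of $(q^\mu)^2=0$ at the cost of having to justify the re-expansion of the composite series, which your convergence estimate already controls; the direct verification avoids that interchange but requires sign bookkeeping that, as you note, is trivial here since $\mu$ is even in $L[1]$. No gaps.
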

	
	This can be seen by a direct verification of the Jacobi axioms, see \cite{kraft2022introductionlinftyalgebrashomotopytheory} for details. \\
	
	We write $L_{\mu} = (L^{\natural}, \{ l_{k}^{\mu} \}_{k=1}^{\infty} )$ to denote the twisted $L_{\infty}$-algebra.  We will often use the letter $d$ to denote the differential $l_{1}: L^{\bullet} \rightarrow L^{\bullet + 1}$, and write $d_{\mu} = l_{1}^{\mu}$ for the twisted differential.

	\begin{rmk}
	Recall that an analytic $L_{\infty}$-algebra $L$ describes the germ of a convergent Maurer-Cartan function at the basepoint $\0$. The twisting procedure above, when twisting by a Maurer-Cartan element $\mu \in \tMC(L,U)$, produces an $L_{\infty}$-algebra $L_{\mu}$ which describes the germ of the \emph{same} analytic space, but centered at a nearby point $\mu$.  If $\mu$ is not a Maurer-Cartan element, then the twisted operations do not describe the germ of this analytic space at $\mu$, as the defining equations do not even vanish at $\mu$. The resulting object one obtains via these operations is known as a \emph{curved} $L_{\infty}$-algebra.
	\end{rmk}

	Therefore, we see that a local analytic $L_{\infty}$-algebra $(L,U)$, describes a family of \emph{curved} $L_{\infty}$-algebras over the base $U$. Note that the curved $L_{\infty}$-algebras are all defined over the same graded vector space $L^{\natural}$, so that we may describe this family in terms of a graded vector bundle over $U$.

	\begin{cons} (The bundle construction)  \label{koszulconstruction} \\
	Suppose that $(L,U)$ is a local analytic $L_{\infty}$-algebra, with structure operations $\{ l_{k} \}_{k=1}^{\infty}$. For each $k$, we define $\bL^{k}_{U} = U \times L^{k}$  to be the trivial $L^{k}$ bundle over $U$. Then, we define the following morphisms:
	
	\begin{enumerate}
		\item{For each $n \geq 1$, we define $\lambda_{n}: \bigodot^{n} \bL^{\bullet}_{U}[1] \rightarrow  \bigodot \bL^{\bullet}_{U}[1]$ by setting:
			
			\[
			\lambda_{n}(b ; v_{1}, \cdots v_{n}) = (b ; l^{b}_{n}(v_{1}, \cdots, v_{n}) )
			\]
			
		}
		
		\item{We define $\lambda_{0}: U \rightarrow \bL^{2}_{U} $ to be the section of the bundle $\bL^{2}_{U}$ defined by the Maurer-Cartan function:
			
			\[
			\lambda_{0}(b) = F_{MC}(b) = \sum\limits_{k=1}^{\infty} \frac{1}{k!}l_{k}(b^{k})    
			\]
		}
	\end{enumerate}	
	By restricting considering the restriction to strictly positive degrees, given by $L^{\bullet \geq 1}$, we obtain a dg analytic space $\Ss(L^{\bullet \geq 1},U)$ with the sheaf cdga of functions defined by
	
	\[
	\O^{\bullet}_{\Ss(L^{\geq 1},U)} = \wh{\Sym}_{\O_{U}}( \bL^{\bullet \geq 2}[-1]^{\vee} )   
	\]
	together with the differential $q = \sum q_{k}$, where $q_{k} = \lambda_{k}^{\vee}$.\\
	
	We say that $\Ss(L^{\geq 1},U)$ is the $L_{\infty}$ space associated to the $L_{\infty}$-algebra $L^{\geq 1}$ (the positive spectrum of the $L_{\infty}$-algebra).
	\end{cons}

	We see that the classical truncation recovers the Maurer-Cartan locus in $U$, as $\tau^{0}(\Ss(L^{\geq 1},U)) = \spec \big( H^{0}(\O^{\bullet}_{\Ss(L^{\geq 1},U)} ) \big) = \tMC(L^{\geq 1},U)$. \\

	Using the notation of an local dg analytic space associated to an analytic $L_{\infty}$-algebra $L$, we can define \emph{analytic morphisms} between local analytic $L_{\infty}$-algebras.
	
	\begin{defn} \label{Linftymorphisms}
	Suppose that $(L_{\alpha}, U_{\alpha})$ and $(L_{\beta}, U_{\beta})$ are local analytic $L_{\infty}$-algebras, with respective convergence domains $U_{\alpha}$ and $U_{\beta}$. We define a local analytic morphism $\phi: (L_{\alpha}, U_{\alpha}) \rightarrow (L_{\beta}, U_{\beta})$ to be given by a morphism of cdgas 
	
	\[
	\phi_{\sharp}: \O^{\bullet}( \calS(L_{\beta}^{\bullet \geq 1}, U_{\beta}) ) \rightarrow \phi_{*}\O^{\bullet}( \calS(L_{\alpha}^{\bullet \geq 1}, U_{\alpha} ) )
	\]
	such that $ \phi_{\sharp} \circ q_{\beta} = \phi_{*}q_{\alpha} \circ \phi_{\sharp}$ 
	\end{defn}
	
	Again, such a morphism $\phi_{\sharp}:  \wh{\Sym}_{\O(U_{\beta}) }( L_{\beta}^{\bullet \geq 2}[-1]^{\vee} )  \rightarrow \wh{\Sym}_{\O(U_{\alpha}) }( L_{\alpha}^{\bullet \geq 2}[-1]^{\vee} )$ is uniquely determined by a sequence of maps $ \phi_{\sharp}^{(k)}: L_{\beta}[-1]^{\vee} \rightarrow \odot^{k}  L_{\alpha}[1]^{\vee}$ or dually, by a sequence of maps $\phi^{k}: \bigwedge^{k} L_{\alpha}  \rightarrow L_{\beta}$. \\
	
	We say that a local analytic morphism $\phi: (L_{\alpha}^{\bullet \geq 1}, U_{\alpha}) \rightarrow (L_{\beta}^{\bullet \geq 1}, U_{\beta})$ is a quasi-isomorphism, if  $\phi^{\sharp}:  \wh{\Sym}_{\O(U_{\beta}) }( L_{\beta}^{\bullet \geq 2}[-1]^{\vee} )  \rightarrow \wh{\Sym}_{\O(U_{\alpha}) }( L_{\alpha}^{\bullet \geq 2}[-1]^{\vee} )$ is a quasi-isomorphism of cdgas.  
	
	Note that it is essentially by definition, that if a morphism $\phi^{\geq 1}: (L^{\geq 1}_{\alpha}, U_{\alpha}) \rightarrow (L^{\geq 1}_{\beta}, U_{\beta})$ is a quasi-isomorphism, then we have an isomorphism of (classical) analytic spaces 
	
	\begin{equation}\label{equalmc}
	\tau^{0} \phi: \tMC(L^{\geq 1}_{\alpha}, U_{\alpha} ) \xrightarrow{\sim} \tMC(L^{\geq 1}_{\beta}, U_{\beta} )
	\end{equation}
	
	So, we see that a quasi-isomorphism between analytic $L_{\infty}$-algebras in positive degrees induces an equivalence between their corresponding Maurer-Cartan germs. Therefore, if one had in question an analytic space $X$ such that $X \cong \tMC(L,U)$ for some analytic $L_{\infty}$-algebra $L$ with convergence domain $U$, then replacing $L$ by a quasi-isomorphic $L'$ results in an equivalent dg enhancement of $X$. 
	
	We will see that this principle will allow us to define a ``discretization of Chern-Simons theory": while Chern-Simons theory conventionally works with an infinite dimensional $L_{\infty}$-algebra $L_{\Omega}$ (which happens to actually be a differential Lie algebra), one can consider instead a finite dimensional homotopy retract of $L_{\Omega}$.\\
	
	As we have explained, local dg analytic spaces always correspond to analytic $L_{\infty}$-algebras. Therefore, it is natural to ask in what way do the local quasi-smooth dg spaces introduced in Chapter $1$ correspond to analytic $L_{\infty}$-algebras. 
	
	\begin{prop} \label{localquasismooth}
	Suppose that $\uU_{\mu} = (U_{\mu}, \lambda: U_{\mu} \rightarrow \CC^{d})$ is a local quasi-smooth dg space as in \ref{localdgspace}, with basepoint $\0_{\mu}$. Then, there exists an analytic $L_{\infty}$-algebra $L_{\mu}^{\bullet} = L_{\mu}^{\bullet \geq 1}$ and a sufficiently small open $U'_{\mu} \subset U_{\mu}$ containing $\0_{\mu}$, such that $U'_{\mu}$ is a convergence domain for $L_{\mu}^{\bullet}$, and 
	\[
	\calS( L_{\mu}, U'_{\mu} ) \cong \uU_{\mu} \resto_{ U'_{\mu}} 
	\]
	as quasi-smooth dg spaces.
	\end{prop}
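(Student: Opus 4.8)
The plan is to build $L_\mu^\bullet$ directly out of the Taylor expansion of the defining map $\lambda$ at the basepoint, to observe that the resulting object is an $L_\infty$-algebra automatically for degree reasons, and then to recognise Construction~\ref{koszulconstruction} applied to it as reproducing, term for term, the Koszul complex $\O^\bullet_{\uU_\mu}$. Concretely, writing $U_\mu \subseteq V$ with $V$ a finite-dimensional complex vector space, I would set $L_\mu^1 := V$, $L_\mu^2 := \CC^d$ and $L_\mu^k := 0$ otherwise, and define the structure operations by $l_k := D^k_{\0_\mu}\lambda \colon \Sym^k L_\mu^1 \to L_\mu^2$ for $k \ge 1$ (the symmetric $k$-th derivative of $\lambda$ at the basepoint, which is graded-symmetric as a bracket because $L_\mu^1$ sits in odd degree $1$), with $l_k$ set to vanish on any argument tuple involving an input from $L_\mu^2$. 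Since $L_\mu$ is concentrated in degrees $1$ and $2$, the homotopy Jacobi identities hold automatically: in each such identity the inner operation sends a tuple of $L_\mu^1$-inputs into $L_\mu^2$, while an outer operation with an $L_\mu^2$-input lands in $L_\mu^3 = 0$, so every quadratic term vanishes, and $l_1^2 = 0$ trivially. By construction the Maurer--Cartan function $F_{\tMC}(x) = \sum_{k\ge1}\tfrac1{k!}l_k(x,\dots,x)$ is precisely the Taylor series of $\lambda$, so $F_{\tMC} = \lambda$.

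Next I would check the analyticity hypothesis of Definition~\ref{analytic}: fixing any norm on the finite-dimensional $L_\mu^\natural = V \oplus \CC^d$ and a compact concentric polydisc of polyradius $\rho$ inside $U_\mu$ on which $\lambda$ is bounded, the several-variable Cauchy estimates give $\|l_k\| = \|D^k_{\0_\mu}\lambda\| < k!\,C^k$ for a suitable constant $C$ (absorbing $\sup\|\lambda\|$ and powers of $\rho^{-1}$), so $L_\mu^\bullet$ is analytic; and since $F_{\tMC} = \lambda$ converges on all of $U_\mu$, any polydisc $U'_\mu \subseteq U_\mu$ about the origin — in particular $U_\mu$ itself — is a convergence domain, so $(L_\mu, U'_\mu)$ is an $\cL_\infty$-algebra.

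It then remains to unwind Construction~\ref{koszulconstruction} over $U'_\mu$ for $L_\mu^{\bullet \geq 1} = L_\mu^\bullet$. Its body is $U'_\mu$; its bundle $\bL^2_{U'_\mu}$ is the trivial $\CC^d$-bundle since $L_\mu^2 = \CC^d$; its defining section is $\lambda_0 = F_{\tMC} = \lambda\resto_{U'_\mu}$; and $\calS(L_\mu^{\bullet \geq 1},U'_\mu)$ is quasi-smooth precisely because $L_\mu^{\geq 2}$ is concentrated in a single degree. Moreover its structure sheaf $\wh{\Sym}_{\O_{U'_\mu}}(\bL^{\bullet \geq 2}[-1]^\vee)$ is, as a graded $\O_{U'_\mu}$-algebra, the exterior algebra $\bigwedge^\bullet_{\O_{U'_\mu}}(\CC^d)^\vee$ — the underlying algebra of $\O^\bullet_{\uU_\mu}\resto_{U'_\mu}$ — and a degree-$1$ square-zero derivation of it is determined by its restriction to the generators $(\CC^d)^\vee$, which here equals $q_0 = \lambda_0^\vee = q_\lambda$; hence the full differential $q = \sum_k q_k$ collapses onto the Koszul differential of $\uU_\mu$. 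The identity on bodies, together with the tautological identifications of the $\CC^d$-bundles and of the defining sections, then gives the desired isomorphism $\calS(L_\mu,U'_\mu) \cong \uU_\mu\resto_{U'_\mu}$ of quasi-smooth dg spaces.

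The step I expect to demand the most care is this last one: reconciling the a priori multi-term differential $q = \sum_k q_k$ produced by the abstract bundle construction — along with its grading-shift and Koszul sign conventions — with the single contraction-with-$\lambda$ differential on the structure sheaf of Definition~\ref{qsspace}, and keeping the various $\tfrac1{k!}$ normalisations consistent across the Taylor series of $\lambda$, the Maurer--Cartan function, and the operations $l_k$. The remaining verifications are routine.
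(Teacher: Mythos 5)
Your proposal is correct and follows essentially the same route as the paper: define $L_\mu^1 = V_\mu$, $L_\mu^2 = \CC^d$ with structure operations given by the Taylor coefficients of $\lambda$ at $\0_\mu$, note that the higher Jacobi identities are vacuous for degree reasons, obtain the bound $\|l_k\| < k!\,C^k$ from analyticity of $\lambda$, and identify the Maurer--Cartan function with $\lambda$ itself. The only difference is that you spell out the identification of $\wh{\Sym}_{\O_{U'_\mu}}(\bL^{\bullet \geq 2}[-1]^\vee)$ with the Koszul structure sheaf in more detail than the paper, which treats that final step as immediate from the definitions.
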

	
	\begin{proof}
	As usual, $\0_{\mu}$ denotes the base point of $U_{\mu}$ (being the origin of the unit polydisc in $V_{\mu} := \CC^{\dim U_{\mu} }$). Furthermore, we have that $\lambda(\0_{\mu}) = 0$, and so the Taylor series expansion of $\lambda$ centered at $\0_{\mu}$ is in degrees $\geq 1$. Each Taylor coefficient can be described as a map:
	
	\[
	\lambda_{k}:  V_{\mu}^{\vee} \rightarrow \odot^{k} ( \CC^{d})^{\vee}
	\]
	Indeed, $\lambda: U_{\mu} \rightarrow \CC^{d}$ is an analytic mapping, whose order $k$ Taylor coefficient is simply an polynomial mapping from $U_{\mu} \rightarrow \CC^{d}$ (hence the appearance of the symmetric tensor). \\
	
	Now, we define the graded vector space $L_{\mu}^{\bullet}$ by setting $L_{\mu}^{1} = V_{\mu}$ and $L_{\mu}^{2} = \CC^{d}$. The idea is that the Taylor coefficients $\lambda_{k}:  V_{\mu}^{\vee} \rightarrow \odot^{k} ( \CC^{d})^{\vee}$ 
	assemble to define an analytic $L_{\infty}$ structure on $L_{\mu}^{\bullet}$.  Now, note that the maps $\lambda_{k}$ already have the correct degrees by definition, and the condition that $\lambda^{2} = 0$ for  $\lambda = \sum\limits_{k= 1 }^{\infty} \lambda_{k}$ is vacuously true (for degree reasons). Therefore, $(L_{\mu}^{\bullet}, \{ \lambda_{k} \}_{k = 1}^{\infty} )$ defines an $L_{\infty}$-algebra (with the linear part $\lambda^{(1)}$ of the analytic mapping being the differential). \\
	Next, note that $\lambda$ is analytic on $U_{\mu}$, so there exists a sufficiently small polydisc $U_{\mu}'$ containing $\0_{\mu}$ so that the terms $\lambda_{k}$ for all $k$ satisfy a bound
	
	\[
	\| \lambda_{k} \| < k! C^{k}
	\]
	for some constant $C$, independent from $k$. Therefore, the open polydisc $U_{\mu}'$ defines a convergence domain for the $L_{\infty}$-algebra $L_{\mu}$. It is trivial to see that by definition, the Maurer-Cartan function for $L_{\mu}$ restricted to the convergence domain $U_{\mu}'$ is given by $\lambda|_{U_{\mu}'}$. Thus, we have that 
	
	\[
	\calS( L_{\mu}, U'_{\mu} ) \cong \uU_{\mu}  \resto_{ U'_{\mu}} 
	\]
	\end{proof}

	\subsection{$\cL_{\infty}$-algebras and $L_{\infty}[1]$-bundles}

	There is an elegant construction that generalizes \ref{koszulconstruction}, given by $L_{\infty}[1]$-bundles. They allow for one to ``glue together" germs of derived spaces over a base, and it also allows for one to consider derived geometric objects that are not only concentrated in strictly positive degrees. On a level of generality that will not be required for this paper, one could form the homotopy category of $L_{\infty}[1]$-bundles and glue them homotopically to describe more general derived spaces. For more details, see: \cite{behrend2021deriveddifferentiablemanifolds}, \cite{Behrend:2023mnj} and \cite{behrend2023structureetalefibrationslinftybundles}. Similar constructions can also be found in the work of Costello, and the work of Tu (\cite{costello}, \cite{tu2014homotopylinfinityspaces}), under the name of $L_{\infty}$-spaces. \\

	\begin{defn} ($L_{\infty}[1]$-bundles) \\
	We say that $(M, \L^{\bullet}, \lambda_{\bullet})$ defines an $L_{\infty}[1]$-bundle if:
	\begin{enumerate}
		\item{$M$ is an analytic manifold, }
		\item{ $\L^{\bullet}$ is a $\ZZ$-graded vector bundle over $M$}
		\item{ $ (\lambda_{k} )_{k = 0}^{\infty}$ is a sequence of analytic bundle operations
			
			\[
			\lambda_{n}: \bigodot \L^{\bullet} \rightarrow \L^{\bullet} 
			\]
			each of degree $1$, that together assemble to satisfy the axioms of a curved $L_{\infty}[1]$ algebra (in the category of graded bundles over $M$). 
		}
	\end{enumerate}
	Note that with this definition, the curvature of the curved $L_{\infty}[1]$ algebra $(M, \L^{\bullet}, \lambda_{\bullet} )$ is given by an analytic section $\lambda_{0}: M \rightarrow \L^{1}$ of the bundle $\L^{1}$.
	\end{defn}
	
	\begin{rmk}
	Note the shift in the grading: these are referred to as $L_{\infty}[1]$-bundles, as they parametrize shifted $L_{\infty}$-algebras (i.e. $L_{\infty}[1]$-algebras). The shift in the grading is more convenient for describing the bundle construction, as we can describe the structure maps $\lambda_{n}$ as \emph{graded symmetric} maps of \emph{degree $1$} as opposed to graded skew-symmetric maps of degree $2-n$. Furthermore, from the point of view of derived geometry, the shifted grading is more natural, as $L^{1}$ should be thought of as the ``base manifold", so that the grading of an $L_{\infty}$-algebra should be shifted before being thought of as a derived space. Of course, the category of $L_{\infty}$-algebras and $L_{\infty}[1]$-algebras are entirely equivalent. The shift functor is known as the \emph{d\'eclage isomorphism}. For more details on $L_{\infty}[1]$-algebras, see \cite{Manetti2022}.
	\end{rmk}

	Applying the Koszul duality correspondence to a $L_{\infty}[1]$-bundle gives a sheaf of $\ZZ$-graded cdgas over $M$
	
	\[
	\Ss(\L,M) := \Big( \wh{S}_{\O_{M}}( \L^{\vee}) , q = \sum q^{k} \Big)   
	\]
	where as before, $q^{k} = (\lambda_{k})^{\vee}$. \\
	
	In the language of derived differential geometry, $\Ss(\L,M)$ would be referred to as a ``$Q$-manifold" \cite{Voronov_2019}.
	We could refer to these as ``stacky" dg analytic spaces, but instead we will just consider $L_{\infty}[1]$-bundles and the corresponding Koszul dual ``stacky" dg spaces as interchangeable objects.
	
	\begin{notation}
	As a matter of convenience, we may abbreviate the notation $(M, \L, \lambda)$ to simply just $\L(M)$ -- where it is understood that for $\L(M)$ to be an $L_{\infty}[1]$-bundle, there is a base manifold $M$ and a curved $L_{\infty}[1]$-structure on $\L$ given by $\lambda = (\lambda)_{k = 0}^{\infty}$.  
	\end{notation}
	
	The definitions of morphisms and quasi-isomorphisms of $L_{\infty}[1]$-bundles are exactly the same as how they are defined in \ref{Linftymorphisms}. That is,
	
	\begin{defn}
	A morphism between $L_{\infty}[1]$-bundles $(M_{\alpha}, \L_{\alpha}, \lambda_{\alpha})$ and $(M_{\beta}, \L_{\beta}, \lambda_{\beta})$ is defined as an analytic map $\phi: M_{\alpha} \rightarrow M_{\beta}$, along with a morphism of sheaf-cdgas 
	
	\[
	\phi^{\#} : \Ss(\L_{\beta}, M_{\beta}) \rightarrow \phi_{*} \Ss(\L_{\alpha}, M_{\alpha})
	\]
	such that $\phi^{\#} \circ q_{\beta} =   \phi_{*}( q_{\alpha} )$
	\end{defn}

	Note that \ref{localquasismooth} now appropriately generalizes: an $L_{\infty}[1]$-bundle for where there is only 1 bundle $\L^{\bullet} = \L^{1}$ in degree $1$ is exactly the data of a quasi-smooth space as defined in chapter $1$. In the notation of chapter $1$, we can write $E = \L^{1}$ and $\lambda = \lambda_{0}$
	
	\begin{lem}
	There is a one-to-one correspondence between analyic \qs spaces, as defined in \ref{qsspace}, and $L_{\infty}[1]$-bundles $(M, \L, \lambda)$ such that $\L^{\bullet} = \L^{1}$ is concentrated entirely in degree $1$.
	\end{lem}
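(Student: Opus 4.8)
The plan is essentially to unwind the two definitions and observe that, once the underlying graded bundle is required to be concentrated in degree $1$, the curved $L_\infty[1]$ data has nowhere to go except into a single analytic section — which is exactly the section $\lambda$ of a \qs space — and all the structure equations degenerate for degree reasons, just as the condition $\lambda^2 = 0$ degenerated in the proof of Proposition~\ref{localquasismooth}.

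First I would set up the forward map. Given $\XX = (M,E,\lambda)$ as in \ref{qsspace}, let $\L^\bullet$ be the graded bundle over $M$ with $\L^1 := E$ and $\L^k := 0$ for $k \neq 1$, put $\lambda_0 := \lambda$, and set $\lambda_n := 0$ for all $n \geq 1$. To see this is a genuine $L_\infty[1]$-bundle, note that every operation $\lambda_n \colon \bigodot^n \L^\bullet \to \L^\bullet$ has degree $+1$; on $\bigodot^n \L^1$, which lives in degree $n$, its image would have to lie in $\L^{n+1}$, and this bundle is zero whenever $n \geq 1$. Hence the only operation that can be nonzero is the curvature $\lambda_0 \colon M \to \L^1 = E$, which is precisely the section $\lambda$. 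Since every term of every curved $L_\infty[1]$ structure equation is a composite involving at least one operation of positive arity, all of these equations hold vacuously, so $(M,\L^\bullet,\lambda_\bullet)$ is an $L_\infty[1]$-bundle concentrated in degree $1$.

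Conversely, given an $L_\infty[1]$-bundle $(M,\L^\bullet,\lambda_\bullet)$ with $\L^\bullet = \L^1$, the same degree count forces $\lambda_n = 0$ for all $n \geq 1$, so the whole structure is recorded by $M$, the bundle $\L^1$, and the curvature section $\lambda_0\colon M \to \L^1$; setting $E := \L^1$ and $\lambda := \lambda_0$ produces a \qs space, and the two assignments are visibly mutually inverse, giving the asserted one-to-one correspondence on objects. To upgrade this to morphisms I would observe that $\Ss(\L,M) = \big(\wh{S}_{\O_M}(\L^\vee), \lambda_0^\vee\big)$ with $\L^\vee = E^\vee$ concentrated in degree $-1$, so the Koszul sign rule identifies $\wh{S}_{\O_M}(\L^\vee)$ with the exterior complex $\big(\cdots \to \bigwedge^2 E^\vee \to E^\vee \xrightarrow{q_\lambda} \O_M\big) = \O^\bullet_\XX$ and $\lambda_0^\vee$ with the Koszul differential; moreover the higher components $\phi^{(k)}$, $k\ge 2$, of an $L_\infty[1]$-morphism between degree-$1$ objects vanish for the same degree reasons, so such morphisms match morphisms of \qs spaces exactly. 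The only point requiring care throughout is the grading bookkeeping around the d\'eclage shift: in the \qs picture the obstruction bundle is $E$, whereas in the shifted $L_\infty[1]$ picture this is the \emph{degree-$1$} bundle $\L^1$ and the section $\lambda$ appears as the \emph{curvature} $\lambda_0$ rather than as any bracket $\lambda_n$ with $n\ge 1$; keeping this straight is the main (and essentially the only) obstacle.
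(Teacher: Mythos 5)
Your proposal is correct and matches the paper's intended reasoning: the paper states this lemma without proof, merely noting the identification $E = \L^{1}$, $\lambda = \lambda_{0}$, and your degree count showing that $\lambda_{n}$ must vanish for $n \geq 1$ (since it would land in $\L^{n+1} = 0$) is exactly the observation that makes this identification a bijection. The additional remarks on morphisms and on $\wh{S}_{\O_M}(E^{\vee}[-1])$ being the Koszul complex $\O^{\bullet}_{\XX}$ are accurate and consistent with the paper's earlier constructions.
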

	
	\begin{notation} Given a local analytic $L_{\infty}$-algebra $(L,U)$ we will use the notation $\bL(U)$ to denote that $L_{\infty}[1]$-bundle associated to $(L,U)$. Hence, both $\bL(U)$ and $(L,U)$ refer to the same data. Recall that we refer to local analytic $L_{\infty}$-algebras as $\cL_{\infty}$-algebras, as shorthand terminology. Note that the convergence domain $U \subset L^{1}$ is not necessarily \emph{pointed} (i.e. there may not be a canonical choice of Maurer-Cartan element associated to an $\cL_{\infty}$-algebra). 
	\end{notation}

	\begin{rmk}
	We will primarily be interested in discussing $\cL_{\infty}$-algebras (i.e. local analytic $L_{\infty}$-algebras $(L,U)$, for some convergence domain $U$). We briefly introduced the notion of an $L_{\infty}[1]$-bundle only to contextualize the type of derived geometry carried by an $\cL_{\infty}$-algebra. Furthermore, as analytic quasi-smooth spaces are $L_{\infty}[1]$-bundles, it makes sense to have a morphism between an analytic quasi-smooth space and an $\cL_{\infty}$-algebra.
	\end{rmk}

	\subsubsection*{Differential graded tangent spaces}
	
	Suppose that $\L(U)$ is an $L_{\infty}[1]$-bundle, and that $\mu \in \tMC( \L(U) )$. Then, recall that the fiber of $(\L, \lambda)$ at $\mu$ defines an $L_{\infty}$-algebra which we denote as $(L_{\mu}, d_{\mu}, l_{2}^{\mu}, \cdots )$. This $L_{\infty}$-algebra is linearized by the underlying chain complex $(L_{\mu}, d_{\mu})$.
	
	\begin{defn}
	We define the dg tangent space of $\L(U)$ at $\mu \in \tMC(\L(U))$ to be the underlying chain complex $(L_{\mu}^{\bullet}, d_{\mu})$ of the $L_{\infty}$-algebra $(L_{\mu}, d_{\mu}, l_{2}^{\mu}, \cdots )$. That is,

	\[
	T_{\mu}^{\bullet} \L(U)  = L_{\mu}^{\bullet} = [ \cdots \xrightarrow{d_{\mu}} L^{1}_{\mu} \xrightarrow{d_{\mu}} L_{\mu}^{2} \xrightarrow{d_{\mu}} \cdots ]
	\]
	\end{defn}

	\begin{defn} \label{quasismoothLinftybundles}
	We say that an $L_{\infty}[1]$-bundle $\L(U)$ is \emph{\qs} if for each $\mu \in \tMC(\L(U))$, 
	the cohomology of $T_{\mu}\L(U)$ is concentrated in degrees $[0,1]$. That is, $H^{k}( L_{\mu} ) = 0$ for all $k \neq 0,1$. Thus, this terminology applies as well to $\cL_{\infty}$-algebras: if $(L,U)$ is a local analytic $L_{\infty}$-algebra, and $U$ is chosen is that $H^{k}(L_{\mu}) = 0$ for all $k \neq 0,1$, for $\mu \in \MC(L,U)$.  
	\end{defn}
	
	{\bfseries  ** Throughout this paper, we will assume that all of our  $\cL_{\infty}$-algebras  are quasi-smooth, unless stated otherwise. ** }

	\section{Non-negatively graded $\cL_{\infty}$-algebras}
	
	In the previous section, we saw how the positively graded part of an analytic $L_{\infty}$-algebra $L$ may be naturally identified with a dg analytic space $\Ss(L^{\bullet \geq 1},U)$ -- such that if $L_{\alpha}^{\bullet \geq 1}$ and $L_{\beta}^{\bullet \geq 1}$ are quasi-isomorphic then the two dg spaces $\calS(L_{\alpha}^{\bullet \geq 1}, U_{\alpha})$ and $\calS(L_{\beta}^{\bullet \geq 1}, U_{\beta})$ have isomorphic Maurer-Cartan germs.\\
	
	In this section, we would like to understand how the non-positive graded pieces of an analytic $L_{\infty}$-algebra $L$ fit into the geometric picture. Roughly speaking, the $L^{\bullet \leq 0}$ part of an analytic $L_{\infty}$-algebra encodes a ``Lie algebroid up to homotopy" structure over the dg space $\calS( L^{\bullet \geq 1}, U)$. Where we have seen how positively graded analytic $L_{\infty}$-algebras encode local models of dg analytic spaces, analytic $L_{\infty}$-algebras with no grading restrictions encode infinitesimal or local descriptions of homotopy groupoids over dg analytic spaces.

	In this paper, we will restrict our attention to non-negatively graded analytic $L_{\infty}$-algebras. That is, we will only consider analytic $L_{\infty}$-algebras that are $\geq 0$ in homological degrees. The corresponding $L_{\infty}[1]$-bundles produced out of non-negatively graded analytic $L_{\infty}$-algebras will be concentrated in degrees $[-1,n]$.  We will see that the degree $L^{0}$ part of $L^{\bullet}$ encodes a ``gauge action" on $L^{\bullet \geq 1}$, and that the statement analogous to \ref{equalmc} will be that a quasi-isomorphism between $L_{\alpha}$ and $L_{\beta}$ induces an isomorphism between local Maurer-Cartan loci, up to local gauge equivalence. \\
	
	Suppose that $(L,U)$ is a non-negatively graded local analytic $L_{\infty}$-algebra with a non-trivial $L^{0}$ term. Then, in particular, for any $\mu \in \tMC(L,U)$, the dg tangent space $T_{\mu}^{\bullet} \Ss(L,U) = [L_{\mu}^{1} \xrightarrow{d_{\mu}} L_{\mu}^{2} \xrightarrow{d_{\mu}} \cdots ]$ can be extended into homological degree $-1$
	\[
	[ L_{\mu}^{0} \xrightarrow{d_{\mu}} T_{\mu}^{\bullet} \Ss(L,U) ] =  [L_{\mu}^{0} \xrightarrow{d_{\mu}} L_{\mu}^{1} \xrightarrow{d_{\mu}} L_{\mu}^{2} \xrightarrow{d_{\mu}} \cdots ]
	\]
	so that the equation $d_{\mu}^{2} = 0$ remains true. In other words, $d_{\mu}^{1} d_{\mu}^{0} = 0$, so that $\im(d_{\mu}^{0}: L^{0}_{\mu}  \rightarrow L^{1}_{\mu}) \subset \ker(d_{\mu}^{1}: L_{\mu}^{1} \rightarrow L_{\mu}^{2})$.  Therefore, as we vary $\mu \in \tMC(L,U)$ we see that the various twisted differentials $d_{\mu}^{0}$ define a distribution on $U$, that is tangent to $\tMC(L,U) \subset U$ at every $\mu \in \tMC(L,U)$. We refer to this distribution, as the \emph{gauge distribution} of the analytic $L_{\infty}$-algebra $L$. \\

	\begin{notation}
	We refer to the map $\delta^{0}: L^{0} \times L^{1} \rightarrow L^{1}$ defined by $\delta^{0}(a, x) := d_{x}(a) = \lambda_{1}^{0}(a,x)$, as the \emph{anchor map}. By abuse of terminology, we may also refer to $d_{x}$ at a fixed $x$ as the anchor map. Note that the anchor map is well-defined whether or not $x$ is a Maurer-Cartan element. 
	\end{notation}

	Furthermore, purely for degree reasons, we see that the higher operations $l^{\mu}_{k}$ vanish when restricted to $L_{\mu}^{0}$ except for $l_{2}^{\mu}$. Therefore, $(L_{\mu}^{0}, l_{2}^{\mu})$ is a (ungraded) Lie algebra, with an anchor map $d_{\mu}: L^{0}_{\mu} \rightarrow L_{\mu}^{1}$. For each $\mu$, we refer to the Lie algebra $(L_{\mu}^{0}, l_{2}^{\mu})$ as the gauge algebra at $\mu$. Sometimes, we may write $\g_{\mu} = (L_{\mu}^{0}, l_{2}^{\mu})$.

	\begin{notation} \label{gaugenotation}
	In this paper, we may use the notation $\g$ to denote the gauge algebra $L^{0}$ of an $\cL_{\infty}$-algebra $\bL(U)$ concentrated in non-negative degrees.
	\end{notation}

	\subsection{Local gauge equivalences and homotopies}

	Formulating the correct notion of an $L_{\infty}$ homotopy is a subtle matter. It is generally accepted that a correct notion of $L_{\infty}$ homotopies is given by the \emph{Sullivan model} of an $L_{\infty}$ homotopy. For more details, see \cite{guan2020gaugeequivalencecompletelinftyalgebras}, \cite{Manetti2022}, \cite{vangarderen2024cyclicainfinityalgebrascalabiyau}, \cite{Getzler_2009}, \cite{getzler2018maurercartanelementshomotopicalperturbation}. 
	
	\subsubsection*{The $L_{\infty}$ path object and homotopies}
	
	\begin{defn}
	We write $\Omega_{\Delta^{1}}$ to be the cdga of piecewise analytic $\CC$-valued differential forms on the unit interval $[0,1]$. Then, for any analytic $L_{\infty}$-algebra $L$, we define $L \hat{\otimes} \Omega_{\Delta^{1}}$ to be the analytic $L_{\infty}$-algebra with $L_{\infty}$ operations defined by: \\
	
	\[
	l_{k}^{\Omega_{\Delta^{1}}} = l_{k} \otimes m_{k}
	\]
	where $m_{k}$ defined the $k$-ary multiplication operator in $\Omega_{\Delta^{1}}$.
	\end{defn}
	Note that $ L \hat{\otimes} \Omega_{\Delta^{1} } =$
	\[
	\{ \gamma(t) + \eta(t) dt \mid \gamma(t) \text{ is a path in } L^{1}, \text{ } \eta(t) \text{ is a path in } L^{0}, \text{ both piecewise analytic in } t \}
	\]
	
	\begin{defn}
	Suppose that $(L,U)$ is a local anaytic $L_{\infty}$-algebra, and that $\mu, \nu \in \tMC(L,U)$. Then, we say that $\mu \sim \nu$ are (local) gauge equivalent Maurer-Cartan elements in $U$, if there exists a Maurer-Cartan element $\gamma + \eta dt \in L \hat{\otimes} \Omega_{\Delta^{1}}$ such that $\gamma(t) \in U \subset L^{1}$ for all $t$, and $\gamma(0) = \mu$, $\gamma(1) = \nu$. \\
	
	We refer to this relation between Maurer-Cartan elements in $U$ as local gauge equivalence.
	\end{defn}
	
	\begin{lem} \label{equivalencerel}
	Suppose that $(L,U)$ is a local analytic $L_{\infty}$-algebra, with convergence domain $U$. Then, local gauge equivalence between Maurer-Cartan elements in $U$ indeed defines an equivalence relation.
	\end{lem}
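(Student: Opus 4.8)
**The plan is to verify the three properties of an equivalence relation — reflexivity, symmetry, and transitivity — for local gauge equivalence, using the structure of the $L_\infty$ path algebra $L \hat\otimes \Omega_{\Delta^1}$.**

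For \emph{reflexivity}, given $\mu \in \tMC(L,U)$, I would take the constant path $\gamma(t) = \mu$ with $\eta(t) = 0$; one checks directly that $\gamma + \eta\,dt$ is a Maurer-Cartan element of $L\hat\otimes \Omega_{\Delta^1}$ because $\gamma$ is constant (so $d\gamma = 0$) and $\mu$ already satisfies $F_{\tMC}(\mu) = 0$, and clearly $\gamma(t) \in U$ for all $t$. For \emph{symmetry}, given a gauge path $\gamma(t) + \eta(t)\,dt$ from $\mu$ to $\nu$, I would apply the orientation-reversing reparametrization $t \mapsto 1-t$: set $\tilde\gamma(t) = \gamma(1-t)$ and $\tilde\eta(t) = -\eta(1-t)$. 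The pullback of a Maurer-Cartan element of $L\hat\otimes \Omega_{\Delta^1}$ along the cdga automorphism of $\Omega_{\Delta^1}$ induced by $t\mapsto 1-t$ is again Maurer-Cartan (this is functoriality of $\tMC$ in the coefficient cdga), so $\tilde\gamma + \tilde\eta\,dt$ is a valid gauge path from $\nu$ to $\mu$, and it stays in $U$ since $\gamma$ did.

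The substantial step is \emph{transitivity}: given a gauge path from $\mu$ to $\nu$ on $[0,1]$ and another from $\nu$ to $\rho$ on $[0,1]$, I must produce a single gauge path from $\mu$ to $\rho$. The natural approach is concatenation: rescale the first path onto $[0,\tfrac12]$ and the second onto $[\tfrac12,1]$ and glue. Since $\Omega_{\Delta^1}$ was defined to consist of \emph{piecewise} analytic forms, the concatenated path lands in $L \hat\otimes \Omega_{\Delta^1}$ — this is precisely why the "piecewise analytic" hypothesis is built into the definition. I would verify that the concatenated element is still Maurer-Cartan: on each subinterval it is (a reparametrization of) a Maurer-Cartan element, the Maurer-Cartan condition is checked pointwise in $t$ away from the gluing point, and at $t = \tfrac12$ both pieces take the common value $\nu \in L^1$ (with the $dt$-components possibly jumping, which is allowed). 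The reparametrizations $t \mapsto 2t$ and $t\mapsto 2t-1$ are again cdga maps $\Omega_{\Delta^1} \to \Omega_{\Delta^1}$ on the respective halves, so functoriality applies as in the symmetry argument. Finally $\gamma$ takes values in $U$ throughout because each piece did.

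The main obstacle I anticipate is making precise that "Maurer-Cartan on each subinterval plus matching endpoint values implies Maurer-Cartan on the whole interval" when the $dt$-component is merely piecewise analytic and may be discontinuous at the join; one needs the Maurer-Cartan equation for $\gamma + \eta\,dt$ to decompose into a $dt$-free part (the equation $\dot\gamma = -F_{\tMC}^{(\eta)}(\gamma)$ governing the path, which holds on each open subinterval) and a part proportional to $dt$ (an algebraic identity among $\eta$, $\gamma$ holding pointwise), neither of which sees the single join point. Alternatively, and perhaps more cleanly, one may invoke the standard fact (see \cite{Manetti2022}, \cite{guan2020gaugeequivalencecompletelinftyalgebras}) that gauge equivalence via the Sullivan/path construction is an equivalence relation and simply transcribe the argument, checking only that the convergence-domain constraint $\gamma(t)\in U$ is preserved under constant paths, reversal, and concatenation — which it visibly is.
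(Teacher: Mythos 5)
Your proof is correct and follows essentially the same route as the paper's: constant path for reflexivity, time reversal for symmetry, and concatenation of piecewise analytic paths for transitivity. Your version is in fact slightly more careful — the sign $\tilde\eta(t) = -\eta(1-t)$ forced by pulling back $dt$ along $t\mapsto 1-t$ is correct and is omitted in the paper's statement of the reversed path, and your discussion of why the $dt$-component may jump at the join point makes explicit what the paper leaves implicit in the phrase ``concatenation of piecewise analytic paths.''
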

	\begin{proof}
	In what follows, $\sim$ denotes local gauge equivalence. 
	\begin{enumerate}
		\item{If $\mu \in \tMC(L,U)$ then we have that $\mu \sim \mu$. Indeed, we define the constant path $\gamma_{\mu}(t) = \mu$ for all $t \in [0,1]$. Then $\gamma_{\mu}(t) \in \tMC( L \hat{\otimes} \Omega_{\Delta^{1} } )$, as $\frac{d}{dt} \gamma_{\mu}(t) = 0$. }
		\item{If $\mu, \nu \in \tMC(L,U)$ then we have that $\mu \sim \nu$ iff $\nu \sim \mu$. If $\mu \sim \nu$ via a local gauge equivalence $\gamma_{\mu \nu}(t) + \eta_{\mu \nu}(t) dt \in \tMC( L \hat{\otimes} \Omega_{\Delta^{1}} )$, where $\gamma_{\mu \nu}(0 ) = \mu$ and $\gamma_{\mu \nu}(1) = \nu$, the reverse time path $\gamma_{\mu \nu}(1-t) + \eta_{\mu \nu}(1-t) dt$ defines a local gauge equivalence from $\nu$ to $\mu$.  }
		\item{Suppose that $\mu \sim \nu$ and $\nu \sim \zeta$ , for $\mu, \nu, \zeta \in \tMC(L,U)$. Then, $\mu \sim \zeta$ via the concatenation of piecewise analytic paths. }
	\end{enumerate}
	\hfill
	\end{proof}

	\begin{ex}
	Suppose that $L$ is a non-negatively graded differential graded Lie algebra, with a non-trivial $L^{0}$ term. Then,  a gauge equivalence between Maurer-Cartan elements $\mu$ and $\nu$ is a Maurer-Cartan element $\gamma(t) + \eta(t)dt \in L \hat{\otimes} \Omega_{\Delta^{1}}$, such that $\gamma(0) = \mu$ and $\gamma(1) = \nu$. Explicitly, we can write down the Maurer-Cartan equation for $L \hat{\otimes} \Omega_{\Delta^{1}}$:
	
	\[
	d( \gamma(t) + \eta(t)dt) + \frac{1}{2} [ \gamma(t) + \eta(t) dt , \gamma(t) + \eta(t) dt ] = 0 
	\]
	
	which splits into two equations, for all $t \in [0,1]$:
	
	\begin{enumerate}
		\item{$d \gamma(t) + \frac{1}{2}[\gamma(t), \gamma(t)] = 0$}
		\item{$\frac{d}{dt} \gamma(t) + d \eta(t) + [\gamma(t), \eta(t) ] = 0$} 
	\end{enumerate}
	
	Equation $1.$ says that $\gamma(t)$ is a path through the Maurer-Cartan locus $\tMC(L,U)$, and equation $2.$ says that the direction of the path $\gamma(t)$ is along the image of $\eta(t)$ via the anchor map $d_{\gamma(t)}: L_{\gamma(t)}^{0} \rightarrow L_{\gamma(t)}^{1}$ 
	\end{ex}
	
	\begin{rmk} \label{gaugegroup}
	Suppose that $L$ is a non-negatively graded differential graded Lie algebra, with a non-trivial $L^{0}$ term. Suppose further, that the gauge algebra $\g = (L^{0}, l_{2} = [,])$ can be identified in coordinates, with a matrix Lie algebra acting on the vector space $L^{\geq 1}$. Or alternatively, in a more coordinate free way, suppose that we can identify $\g$ with an endomorphism Lie algebra of the vector space $L^{\geq 1}$. Now, suppose that $G = \exp(L^{0})$ is a well defined analytic Lie group. Then, $G$ is referred to as the \emph{gauge group}. It acts on $L^{\bullet \geq 2}$ by conjugation $x \mapsto g^{-1}xg$,  and on $L^{1}$ via the gauge action:
	
	\[
	g \cdot x = g^{-1} x g + g^{-1}dg
	\]
	
	Then, by direct calculation, one can verify that: for a fixed $x \in L^{1}$, the derivative of the action map $g \mapsto g \cdot x$ at the identity, in the direction of $a \in \g$ is given by the twisted differential $d_{x}(a)$. Therefore, we see in such a situation, the anchor map agrees with the anchor map defined by the associated action algebroid of a gauge group action. 
	\end{rmk}
	
	\begin{defn}
	We say that morphisms of local analytic $L_{\infty}$-algebras $F, G: (L_{\alpha}, U_{\alpha}) \rightarrow (L_{\beta}, U_{\beta})$ 
	are homotopic, if there exists a local analytic $L_{\infty}$ morphism $H: (L_{\alpha}, U_{\alpha} ) \rightarrow (L_{\beta} \hat{\otimes} \Omega_{\Delta^{1}}, U_{\beta} ^{\Delta^{1}} )$ such that $H |_{t = 0 } = F$ and $H |_{t = 1}  = G$.
	\end{defn}
	
	\begin{defn}
	We say that analytic $L_{\infty}$-algebras $L_{\alpha}$ and $L_{\beta}$ are homotopy equivalent, if there exist analytic $L_{\infty}$ morphisms $F: L_{\alpha} \rightarrow L_{\beta}$ and $G: L_{\beta} \rightarrow L_{\alpha}$ such that $F \circ G \sim \Id_{L_{\beta}}$ and $G \circ F \sim \Id_{L_{\alpha}}$ 
	\end{defn}
	
	\begin{defn}
	We say that an $\cL_{\infty}$-algebra $(L_{\beta}, U_{\beta})$ is a homotopy retract of an $\cL_{\infty}$-algebra $(L_{\alpha}, U_{\alpha})$ if there exists
	
	\begin{itemize}
		\item{an analytic $L_{\infty}$ morphism $\mbf{I}_{\beta \alpha}: L_{\beta} \rightarrow L_{\alpha}$ and an analytic $L_{\infty}$ morphism $\mbf{P}_{\alpha \beta}: L_{\alpha} \rightarrow L_{\beta}$ satisfying $\mbf{P}_{\alpha \beta} \circ \mbf{I}_{\beta \alpha}   = \Id_{L_{\beta} }$ }
		\item{ an analytic homotopy $\mbf{H}_{\alpha}: L_{\alpha} \rightarrow  L_{\alpha} \hat{\otimes} \Omega_{\Delta^{1} }$ such that $\mbf{H}_{\alpha} |_{t = 0 } = \Id_{L_{\alpha} }$ and $\mbf{H}_{\alpha} |_{t = 1} = \mbf{I}_{\beta \alpha} \circ \mbf{P}_{\alpha \beta}$ }
	\end{itemize}

	\end{defn}
	
	If $(L_{\beta}, U_{\beta})$ is a homotopy retract of $(L_{\alpha}, U_{\alpha})$ via the data $(\mbf{I}_{\beta \alpha}, \mbf{P}_{\alpha \beta}, \mbf{H}_{\alpha})$ then we say that the triple $(\mbf{I}_{\beta \alpha}, \mbf{P}_{\alpha \beta}, \mbf{H}_{\alpha})$ is an $L_{\infty}$ homotopy retract context. It is diagrammatically depicted as 
	
	\[
	L_{\beta}  \rightleftarrowstack{\mbf{I}_{\beta \alpha }}{\mbf{P}_{\alpha \beta} }    L_{\alpha} \circlearrowleft{ \mbf{H}_{\alpha}  }
	\]
	
	If we furthermore have that $\mbf{H}_{\alpha} \circ \mbf{I}_{\beta \alpha} = 0$ and $\mbf{H}_{\alpha}^{2} = 0 = \mbf{P}_{\alpha \beta} \circ \mbf{H}_{\alpha}$, then we say that the homotopy retract context  $(\mbf{I}_{\beta \alpha}, \mbf{P}_{\alpha \beta}, \mbf{H}_{\alpha})$ defines a \emph{strong} homotopy retract.

	\begin{defn}
	If $F: (L_{\alpha}, U_{\alpha})  \rightarrow  (L_{\beta}, U_{\beta})$ is a morphism of $\cL_{\infty}$-algebras, we define

	\[ 
	F_{*}( \gamma(t) + \eta(t)dt) := (F_{*}\gamma)(t) + (F_{*} \eta)(t)  \in L_{\beta} \wh{\otimes} \Omega_{\Delta^{1} }
	\]
	where for any fixed $t$, $(F_{*}\gamma)(t) := F_{0}(\gamma(t))$ and $(F_{*}\eta)(t) = F^{(1)}_{\gamma(t)}(\eta(t) )$. Recall that we write
	\begin{align*}
		& F_{0}(\mu) := \sum\limits_{k=0}^{\infty} \frac{1}{k!} F^{(k)}( \mu^{\odot k} )   \\
		& F^{(1)}_{\mu}(x) := \sum \limits_{k = 0}^{ \infty} F^{(1 + k)}( x, \mu^{\odot k} ) 
	\end{align*}
	\end{defn}
	
	\begin{lem}
	If $\gamma(t) + \eta(t)dt \in \tMC( L_{\alpha} \wh{\otimes}  \Omega_{\Delta^{1} } )$ is a homotopy between the Maurer-Cartan elements $\gamma(0)$ and $\gamma(1)$ in $L_{\alpha}$, then $F_{*}( \gamma(t) + \eta(t) dt ) \in \tMC( L_{\beta} \wh{\otimes} \Omega_{\Delta^{1} } )$, so that $F_{*}( \gamma(t) + \eta(t) dt )$ is a homotopy between the Maurer-Cartan elements $F_{0}(\gamma(0))$ and $F_{0}(\gamma(1))$ in $L_{\beta}$. 
	\end{lem}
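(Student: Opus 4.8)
The plan is to exhibit $F_*$ as the pushforward of the Maurer--Cartan locus under an $L_\infty$-morphism of \emph{path objects}, and then to invoke the standard fact that morphisms of $L_\infty$-algebras carry Maurer--Cartan elements to Maurer--Cartan elements. Concretely, I would first promote $F$ to a morphism
\[
F^{\Omega} \;:=\; F \,\wh{\otimes}\, \Id_{\Omega_{\Delta^1}} \;:\; \big(L_\alpha \wh{\otimes} \Omega_{\Delta^1},\, U_\alpha^{\Delta^1}\big) \longrightarrow \big(L_\beta \wh{\otimes} \Omega_{\Delta^1},\, U_\beta^{\Delta^1}\big),
\]
whose $k$-th Taylor component is $F^{(k)} \otimes m_k$, in exact parallel with the structure maps $l_k^{\Omega_{\Delta^1}} = l_k \otimes m_k$ used to define $L\wh\otimes\Omega_{\Delta^1}$. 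The point is that the coherence identities witnessing that $(F^{(k)})_k$ is an $L_\infty$-morphism are multilinear identities in the $l_k$ and $F^{(k)}$ which remain valid after tensoring each slot with the graded-commutative associative dg algebra $\Omega_{\Delta^1}$; equivalently, $(-)\wh\otimes\Omega_{\Delta^1}$ is a functor on analytic $L_\infty$-algebras and their morphisms (see \cite{guan2020gaugeequivalencecompletelinftyalgebras}, \cite{Manetti2022}). Because $F$ is a \emph{local} analytic morphism, its underlying map sends $U_\alpha$ into $U_\beta$, so for $\gamma(t)\in U_\alpha$ the series defining $F_0(\gamma(t))$ and $F^{(1)}_{\gamma(t)}$ converge and $F_0(\gamma(t))\in U_\beta$ for every $t$; hence $F^{\Omega}$ is genuinely a morphism of the local analytic $L_\infty$-algebras above.

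Next I would use that an $L_\infty$-morphism $G:(L,U)\to(L',U')$ induces, by functoriality of the classical truncation (compare \ref{equalmc}), a map $\tau^0 G : \tMC(L,U)\to\tMC(L',U')$ which on points is $\Psi\mapsto G_0(\Psi)=\sum_{k}\tfrac1{k!}G^{(k)}(\Psi^{\odot k})$; in the dual picture this is simply the statement that $G_\sharp$ is an algebra map intertwining the codifferentials, so it takes a solution $e^{\Psi}$ of $q(e^{\Psi})=0$ to a solution. Applying this with $G=F^{\Omega}$ and $\Psi=\gamma(t)+\eta(t)\,dt$, I then compute $(F^{\Omega})_0(\gamma+\eta\,dt)$ by expanding $\Psi^{\odot k}$: since the product of any two one-forms on $\Delta^1$ vanishes, only the terms with zero or exactly one factor equal to $\eta\,dt$ survive. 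The zero-$dt$ terms assemble to $\sum_k\tfrac1{k!}F^{(k)}(\gamma(t)^{\odot k})=F_0(\gamma(t))=(F_*\gamma)(t)$, and, after collecting the $k$ equal summands in each degree, the single-$dt$ terms assemble to $\big(\sum_{j}F^{(1+j)}(\eta(t),\gamma(t)^{\odot j})\big)\,dt = F^{(1)}_{\gamma(t)}(\eta(t))\,dt = (F_*\eta)(t)\,dt$. Thus $(F^{\Omega})_0(\gamma+\eta\,dt)=F_*(\gamma+\eta\,dt)$, so $F_*(\gamma+\eta\,dt)\in\tMC\big(L_\beta\wh\otimes\Omega_{\Delta^1}\big)$.

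Finally, evaluating at the endpoints: since the de Rham differential (and hence the $t$-derivative) only enters the $dt$-component of the Maurer--Cartan equation, restricting $F_*(\gamma+\eta\,dt)$ to $t=0$ and $t=1$ yields $F_0(\gamma(0))$ and $F_0(\gamma(1))$, which are Maurer--Cartan elements of $L_\beta$ either by reading off the $\Omega^0$-part of the Maurer--Cartan equation for $F_*(\gamma+\eta\,dt)$ or directly because $F_0$ preserves Maurer--Cartan elements; by definition $F_*(\gamma+\eta\,dt)$ is then a homotopy between them. The one genuinely nontrivial step is the first: verifying that tensoring the $L_\infty$-morphism $F$ with $\Omega_{\Delta^1}$ again produces an $L_\infty$-morphism (the analogue, for morphisms, of the already-used fact that $L\wh\otimes\Omega_{\Delta^1}$ is an $L_\infty$-algebra) and keeping track of convergence domains; everything after that is the standard ``$L_\infty$-morphisms preserve Maurer--Cartan elements'' principle together with a bookkeeping expansion that truncates because $dt\wedge dt=0$.
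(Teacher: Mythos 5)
Your proof is correct, but it takes a genuinely different route from the paper's. The paper argues directly on the two components of the Maurer--Cartan equation in the path object: the $\Omega^{0}$-component holds because $F_{0}$ carries Maurer--Cartan elements to Maurer--Cartan elements, and the $dt$-component is checked by the chain rule, $\frac{d}{dt}F_{0}(\gamma(t)) = (DF_{0})_{\gamma(t)}(\dot\gamma(t)) = F^{(1)}_{\gamma(t)}\big(d_{\gamma(t)}\eta(t)\big) = d_{F_{0}(\gamma(t))}\big(F^{(1)}_{\gamma(t)}\eta(t)\big)$, i.e.\ the pushed-forward path solves the flow equation for the pushed-forward gauge direction. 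You instead promote $F$ to a morphism $F\wh{\otimes}\,\Id_{\Omega_{\Delta^{1}}}$ of path objects, invoke the general principle that $L_{\infty}$-morphisms preserve Maurer--Cartan elements, and then identify $(F^{\Omega})_{0}(\gamma+\eta\,dt)$ with $F_{*}(\gamma+\eta\,dt)$ via the $dt\wedge dt=0$ truncation. Your route is more conceptual and scales better --- it immediately gives the simplicial functoriality used later for $\MC_{\leq 1}$ and works verbatim for $\Omega_{\Delta^{n}}$ --- at the cost of having to justify that tensoring a morphism with a cdga again yields an $L_{\infty}$-morphism and that convergence domains are respected; the paper's route is shorter here because the component form of the path-object Maurer--Cartan equation was already written out in the preceding example, and it makes the intertwining identity $F^{(1)}_{\mu}\circ d_{\mu} = d_{F_{0}(\mu)}\circ F^{(1)}_{\mu}$ (the real content of the $dt$-component) explicit rather than buried in the coalgebra formalism. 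One bookkeeping remark: with the $\tfrac{1}{k!}$ in $(F^{\Omega})_{0}$, collecting the $k$ placements of $\eta\,dt$ yields coefficients $\tfrac{1}{(k-1)!}$, so your single-$dt$ sum should carry factors $\tfrac{1}{j!}$, which is precisely the normalization under which $F^{(1)}_{\mu}=(DF_{0})_{\mu}$; the paper's displayed formula for $F^{(1)}_{\mu}$ omits these, so the discrepancy is a convention issue already present in the source rather than a gap in your argument.
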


	\begin{proof}
	If $\gamma(t) \in \tMC(L_{\alpha}, U_{\alpha})$ so that $d_{\gamma(t)} \eta(t) \in T_{\gamma(t)} ( U_{\alpha} )$, then note that $F_{\gamma(t)}^{(1)}(\eta(t)) = (DF_{0})_{\gamma(t)} \big( \eta(t) \big)$ so that 
	\[
	\frac{d}{dt} \big( F_{0}( \gamma(t) ) \big) = (DF_{0})_{\gamma(t) } \big( d_{\gamma(t)}( \eta(t) ) \big) = F_{\gamma(t)}^{(1)}(\eta(t))
	\]  
	therefore, we have that the differential equation
	\[
	\frac{d}{dt} (F_{*}\gamma)(t) = d_{ F_{*}\gamma(t) } ( F_{*} \eta(t) ) 
	\]
	is satisfied for all $t$. So, $F_{*}( \gamma(t) + \eta(t)dt )$ is indeed a Maurer-Cartan element in $\tMC(L_{\beta}, U_{\beta})$ (i.e. a gauge equivalence). Clearly, one has by definition, the initial conditions: $F_{*}( \gamma(t) + \eta(t)dt ) |_{t = 0} =  F_{0}(\gamma(0))$ and $F_{*}( \gamma(t) + \eta(t)dt ) = F_{0}( \gamma(1) )$.
	\end{proof}
	
	\subsubsection*{The $1$-truncated Maurer-Cartan nerve $\MC_{\leq 1}$}
	
	When an $L_{\infty}$-algebra has a non-trivial degree $0$ graded piece, then is no longer generally true that quasi-isomorphic, or homotopic analytic $L_{\infty}$-algebras $(L,U)$ and $(L',U')$ have isomorphic Maurer-Cartan loci in that $\tMC(L,U) \cong \tMC(L',U')$. Instead, $(L,U)$ and $(L',U')$ have equivalent Martan-Cartan loci, up to gauge equivalence. Thus, in generality, the Maurer-Cartan locus of an analytic $L_{\infty}$-algebra $(L,U)$ should be constructed as a (higher) groupoid which parametrizes Maurer-Cartan locus up to gauge (and possibly higher gauge) equivalences. This is known as the Maurer-Cartan, see nerve \cite{Getzler_2009}, \cite{getzler2018maurercartanelementshomotopicalperturbation} for more details. In this paper, we will only work with non-negatively graded ``\qs" $L_{\infty}$-algebras (\ref{quasismoothLinftybundles}), so that we do not need higher groupoids to describe gauge equivalence. \\
	
	We would like to make a statement of the form, where $\sim$ denotes local gauge equivalence:\\
	
	`` if $(L_{\alpha}, U_{\alpha})$ and $(L_{\beta}, U_{\beta})$ are homotopic local analytic $L_{\infty}$-algebras, then $[\tMC(L_{\alpha}, U_{\alpha}) /\sim ]  \cong [\tMC(L_{\beta}, U_{\beta})/\sim]$ " \\

	In general, making sense of the quotient as an analytic space can be complicated and wrought with technicalities, so we take the approach of understanding gauge equivalence by way of simplicial sets. To simplify matters, we will restrict our attention to the $1$-truncated Maurer-Cartan nerve.  
	
	\begin{cons}
	Given a local analytic $L_{\infty}$-algebra $(L,U)$, concentrated  in non-negative degrees.
	We define the $1$-truncated simplicial Maurer-Cartan nerve $\MC_{\leq 1}(L,U)$  to be the following \textbf{simplicial set}:
	
	\begin{itemize}
		\item{The space of vertices $\MC_{0}(L,U)$ is given by the set $\tMC(L,U)$}
		\item{The space of edges $\MC_{1}(L,U)$ is given by the set $\tMC(L \otimes \Omega_{\Delta^{1} }, U)$}
	\end{itemize}
	
	Furthermore, we must define face maps $d_{0}, d_{1}: \MC_{1}(L,U) \rightarrow \MC_{0}(L,U)$ and a single degeneracy map $s_{0}: \MC_{0}(L,U) \rightarrow \MC_{1}(L,U)$. 
	\begin{itemize}
		\item{ For each edge $\gamma(t) + \eta(t)dt \in \MC_{1}(L,U)$, we define $d_{0}( \gamma(t) + \eta(t)dt) = \gamma(0)$  and $d_{1}( \gamma(t) + \eta(t)dt) = \gamma(1)$. That is, two Maurer-Cartan elements $\mu$ and $\mu'$ in $\MC_{0}(L,U)$ are ends points of a non-degenerate $1$-simplex in $\MC_{1}(L,U)$ if there is a gauge equivalence $\gamma(t) + \eta(t)dt \in \tMC( L \otimes \Omega_{\Delta^{1} } )$ between them. }
		
		\item{ For each vertex $\mu \in \MC_{0}(L,U)$, we define $s_{0}(\mu) = \gamma_{\mu}(t) \in \MC_{1}(L,U)$, where $\gamma_{\mu}(t) = \mu$ for all $t \in [0,1]$ is the constant path at $\mu$. Clearly, this is a Maurer-Cartan element in $L \otimes \Omega_{\Delta^{1} }$. }

	\end{itemize}
	By \ref{equivalencerel}, the gauge equivalence is an equivalence relation on $\tMC(L,U)$. Therefore, $\MC_{\leq 1}(L,U)$ is a category. In fact, $\MC_{\leq 1}(L,U)$ is a groupoid.
	\end{cons}

	\begin{prop} \label{mcgaugeiso}
	
	Suppose that $(L_{\alpha}, U_{\alpha})  \rightleftarrowstack{F}{ G}    (L_{\beta}, U_{\beta})$ is a pair of homotopy equivalences that are homotopy inverses of each other, for local analytic $L_{\infty}$-algebras $(L_{\alpha}, U_{\alpha}) $ and $(L_{\beta}, U_{\beta})$ that are concentrated in non-negative degrees. Then, there is an induced pair of weak equivalences between their $1$-groupoids:
	
	\[
	\MC_{\leq 1}(L_{\alpha},U_{\alpha})  \rightleftarrowstack{ F_{\bullet}  }{  G_{\bullet} }   \MC_{\leq 1}(L_{\beta},U_{\beta}) 
	\]
	\end{prop}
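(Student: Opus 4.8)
The plan is to promote the two homotopy equivalences to simplicial maps of Maurer--Cartan nerves and then to recognize that $F_\bullet$ and $G_\bullet$ are mutually inverse equivalences of the groupoids $\MC_{\leq 1}(L_\alpha,U_\alpha)$ and $\MC_{\leq 1}(L_\beta,U_\beta)$; an equivalence of groupoids is in particular a weak equivalence, so this suffices. Since by Lemma \ref{equivalencerel} local gauge equivalence is an honest equivalence relation, each groupoid $\MC_{\leq 1}$ is equivalent to the discrete groupoid on its set of gauge classes, so in the end everything reduces to showing that $F_0$ descends to a bijection $\tMC(L_\alpha,U_\alpha)/{\sim}\;\to\;\tMC(L_\beta,U_\beta)/{\sim}$ with inverse induced by $G_0$.

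First I would make the assignment $\Phi\mapsto\Phi_\bullet$ precise for an arbitrary morphism $\Phi\colon(L_\alpha,U_\alpha)\to(L_\beta,U_\beta)$ of non-negatively graded $\cL_\infty$-algebras: on $0$-simplices set $\Phi_\bullet(\mu)=\Phi_0(\mu)$, on $1$-simplices set $\Phi_\bullet(\gamma(t)+\eta(t)dt)=\Phi_*(\gamma(t)+\eta(t)dt)$. That $\Phi_0$ sends $\tMC(L_\alpha,U_\alpha)$ into $\tMC(L_\beta,U_\beta)$ and that $\Phi_*$ sends a gauge equivalence in $(L_\alpha,U_\alpha)$ to one in $(L_\beta,U_\beta)$, compatibly with endpoints and with the constant-path section $s_0$, is exactly the pushforward lemma immediately preceding this proposition; hence $\Phi_\bullet$ commutes with $d_0,d_1,s_0$ and is a morphism of simplicial sets. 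Using the explicit composition formulas for $\Phi_0$ and $\Phi^{(1)}_\mu$ one checks $(\Phi\circ\Psi)_0=\Phi_0\circ\Psi_0$ and $(\Phi\circ\Psi)_*=\Phi_*\circ\Psi_*$, so $(-)_\bullet$ is functorial; applying it to $F$ and $G$ yields the pair $F_\bullet,G_\bullet$. Throughout this step the only thing to watch is the bookkeeping of convergence domains, but the homotopies are by definition morphisms into $(L\hat{\otimes}\Omega_{\Delta^{1}},U^{\Delta^{1}})$, so every pushed-forward element and path automatically stays in the prescribed domain.

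Next I would use the homotopy $\mbf{H}_\beta\colon L_\beta\to L_\beta\hat{\otimes}\Omega_{\Delta^{1}}$ with $\mbf{H}_\beta|_{t=0}=\Id_{L_\beta}$ and $\mbf{H}_\beta|_{t=1}=F\circ G$ to build a natural isomorphism $\Id_{\MC_{\leq 1}(L_\beta,U_\beta)}\Rightarrow F_\bullet\circ G_\bullet$. For a vertex $\nu$, the pushforward lemma shows $(\mbf{H}_\beta)_0(\nu)$ is a gauge equivalence in $L_\beta$ from $\nu$ to $F_0(G_0(\nu))=(F_\bullet\circ G_\bullet)(\nu)$, i.e. an edge $\eta_\nu$; naturality of $\eta=\{\eta_\nu\}$ follows by applying the base-changed morphism $\mbf{H}_\beta\hat{\otimes}\Id_{\Omega_{\Delta^{1}}}$ to an edge $e\colon\nu\to\nu'$ of $\MC_1(L_\beta,U_\beta)$, which produces a Maurer--Cartan element of $L_\beta\hat{\otimes}\Omega_{\Delta^{1}}\hat{\otimes}\Omega_{\Delta^{1}}$ whose four faces are $e$, $(F_\bullet\circ G_\bullet)(e)$, $\eta_\nu$, $\eta_{\nu'}$ — precisely the square asserting naturality. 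The identical argument with $\mbf{H}_\alpha$ gives $\Id\Rightarrow G_\bullet\circ F_\bullet$. Hence $F_\bullet$ and $G_\bullet$ are inverse equivalences of groupoids, so in particular weak equivalences, which is the claim; unwinding, essential surjectivity of $F_\bullet$ is the relation $\nu\sim F_0(G_0(\nu))$ coming from $F\circ G\sim\Id$, and faithfulness is the implication $F_0(\mu)\sim F_0(\mu')\Rightarrow G_0F_0(\mu)\sim G_0F_0(\mu')\Rightarrow\mu\sim\mu'$, using $G\circ F\sim\Id$ and transitivity from Lemma \ref{equivalencerel}.

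The routine parts are the domain bookkeeping in Step 1 and the endpoint computations in Step 2. The one place I expect real care to be needed is the naturality of $\eta$: it is genuinely important that $\mbf{H}_\beta$ is a bona fide $\cL_\infty$-morphism and not merely a $1$-parameter family of maps, since it is this that lets one transport a $1$-simplex $e$ to the $2$-dimensional Maurer--Cartan datum filling the naturality square. (If one adopts instead the ``thin'' reading of $\MC_{\leq 1}$, in which gauge-equivalent vertices are joined by at most one arrow, naturality is automatic and only the construction of $\eta_\nu$ together with the $\pi_0$-bijection of the first paragraph remain — arguably the shortest route.) Coupled with the requirement that all of the pushforwards and the homotopies $(\mbf{H}_\alpha)_0(\mu)$, $(\mbf{H}_\beta)_0(\nu)$ land in the relevant convergence domains, this is where whatever work there is actually lies.
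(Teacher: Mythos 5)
Your proposal is correct and follows essentially the same route as the paper: promote $F,G$ to simplicial maps via $F_0=F$, $F_1=F_*$, then use the $L_\infty$ homotopies $\mbf{H}_\alpha,\mbf{H}_\beta$ to produce, for each vertex, an edge to its image under $G\circ F$ (resp.\ $F\circ G$), which is exactly the paper's simplicial homotopy $(H_{L_\alpha})_0(\mu)=h_{L_\alpha}(\mu)$. The only divergence is that you take care over naturality of these edges on $1$-simplices, whereas the paper dismisses this via $1$-truncation; your extra step is harmless and, if anything, more careful.
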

	
	\begin{proof}
	Recall that, given morphisms $F$ and $G$, we have induced simplicial morphisms:
	
	\[
	\begin{tikzcd}[column sep = 0.7em]
		\tMC( L_{\alpha} \otimes \Omega_{\Delta^{1} } ) \arrow[r, shift left] \arrow[r, shift right] \arrow[d, "F_{*}", shift left] & \tMC(L_{\alpha}, U_{\alpha} ) \arrow[d, "F", shift left]\\
		\tMC( L_{\alpha} \otimes \Omega_{\Delta^{1} } )  \arrow[r, shift left] \arrow[r, shift right] \arrow[u, "G_{*}", shift left] & \tMC(L_{\beta}, U_{\beta}) \arrow[u, "G", shift left]
	\end{tikzcd}
	\]
	We will write  $F_{\bullet} = (F_{0}, F_{1})$ and $G_{\bullet} = (G_{0}, G_{1})$, where $F_{0} := F$, $F_{1} := F_{*}$ and $G_{0} := G$, $G_{1} := G_{*}$. \\
	
	We would like to show that $F_{\bullet}$ and $G_{\bullet}$ are weak equivalences between the simplicial sets $\MC_{\leq 1}( L_{\alpha} , U_{\alpha} )$ and $\MC_{\leq 1} (L_{\beta} , U_{\beta})$, by showing that there are simplicial homotopies 
	\begin{enumerate}
		\item{$H_{L_{\alpha}}: \MC_{\leq 1} ( L_{\alpha}, U_{\alpha}) \times \Delta^{1}  \rightarrow \MC_{\leq 1} ( L_{\alpha}, U_{\alpha})$ between $G_{\bullet} \circ F_{\bullet}$ and $\Id_{ \MC_{\leq 1}( L_{\alpha}, U_{\alpha} ) }$  }
		\item{$H_{L_{\beta}}: \MC_{\leq 1} ( L_{\beta}, U_{\beta}) \times \Delta^{1}  \rightarrow \MC_{\leq 1} ( L_{\beta}, U_{\beta})$ between $F_{\bullet} \circ G_{\bullet}$ and $\Id_{ \MC_{\leq 1}( L_{\beta}, U_{\beta}) }$. }
	\end{enumerate}
	
	As usual, this amounts to defining a family of maps $(H_{L_{\alpha}})_{n}: \MC_{\leq 1}(L_{\alpha}, U_{\alpha})_{ n }  \rightarrow  \MC_{\leq 1}(L_{\alpha}, U_{\alpha})_{ n + 1}$,  $(H_{L_{\beta}})_{n}: \MC_{\leq 1}(L_{\beta}, U_{\beta})_{ n }  \rightarrow  \MC_{\leq 1}(L_{\beta}, U_{\beta})_{ n + 1}$  satisfying compatibility with face and degeneracy maps. As our simplicial sets are $1$-truncated, we only need to define $(H_{L_{\alpha}})_{0}$ and $(H_{L_{\beta}})_{0}$. \\
	
	We define:
	\begin{enumerate}
		\item{$(H_{L_{\alpha}})_{0}: \MC_{0}( L_{\alpha}, U_{\alpha} ) \rightarrow \MC_{1}( L_{\alpha}, U_{\alpha} )$ is defined by:
			
			\[
			\mu \mapsto h_{L_{\alpha}}(\mu) = \gamma_{\alpha}(t) + \eta_{\alpha}(t)dt
			\]
			where $h_{L_{\alpha}} : L_{\alpha} \rightarrow L_{\alpha} \otimes \Omega_{\Delta^{1}}$ is the  $L_{\infty}$ local homotopy between $G \circ F$ and $\Id_{L_{\alpha}}$. 
		}
		\item{$(H_{L_{\beta}})_{0}: \MC_{0}( L_{\beta}, U_{\beta} ) \rightarrow \MC_{1}( L_{\beta}, U_{\beta} )$ is defined by:
			
			\[
			\mu \mapsto h_{L_{\beta}}(\mu) = \gamma_{\beta}(t) + \eta_{\beta}(t)dt
			\]
			where $h_{L_{\beta}} : L_{\beta} \rightarrow L_{\beta} \otimes \Omega_{\Delta^{1}}$ is the  $L_{\infty}$ local homotopy between $F \circ G$ and $\Id_{L_{\beta}}$. 
		}
	\end{enumerate}
	As our simplicial sets are $1$-truncated, the only non-trivial requirement to define a simplicial homotopy is given by $d_{0} (H_{L_{\alpha } } )_{0} = G \circ F = (G_{\bullet} \circ F_{\bullet})_{0}$ , $d_{1} (H_{L_{\alpha } } )_{0} = \Id_{L_{\alpha} } = (\Id_{ \MC_{\leq 1}( L_{\alpha}, U_{\alpha} ) } )_{0}$ and 
	$d_{0} (H_{L_{\beta } } )_{0} = F \circ G$ , $d_{1} (H_{L_{\beta} } )_{0} = \Id_{L_{\beta} }$ which hold by definition of our maps $(H_{L_{\alpha } } )_{0}$ and $(H_{L_{\beta} } )_{0}$.
	Therefore, $\MC_{\leq 1}(L_{\alpha}, U_{\alpha})$ and $\MC_{\leq 1}( L_{\beta}, U_{\beta})$ are weakly equivalent as simplicial sets as we have simplicial morphisms 
	
	\[
	\MC_{\leq 1}(L_{\alpha},U_{\alpha})  \rightleftarrowstack{ F_{\bullet}  }{  G_{\bullet} }   \MC_{\leq 1}(L_{\beta},U_{\beta})
	\]
	
	such that $F_{\bullet} \circ G_{\bullet} \sim (\Id_{ \MC_{\leq 1}( L_{\alpha}, U_{\alpha} ) } )_{\bullet}$  and $G_{\bullet} \circ F_{\bullet} \sim (\Id_{ \MC_{\leq 1}( L_{\beta}, U_{\beta} ) } )_{\bullet}$ where $\sim$ denotes equivalence under simplicial homotopy.
	\end{proof}
	
	Therefore, we see that if local analytic $L_{\infty}$-algebras $(L_{\alpha}, U_{\alpha})$ and $(L_{\beta} , U_{\beta})$ are homotopy equivalent and are concentrated in non-negative degrees, then they have homotopy equivalent $1$-truncated Maurer-Cartan nerves (i.e. Maurer-Cartan local groupoids). This is the sense in which $(L_{\alpha}, U_{\alpha})$ and $(L_{\beta}, U_{\beta})$ have equivalent quotients by local gauge equivalence. \\

	\subsection{Minimal model decompositions of $L_{\infty}$-algebras}

	Recall that in chapter $1$, we discussed the matter of choosing a minimal model at a point of a quasi-smooth analytic space. Recall that a local \qs analytic space $\big( U_{\mu} , \lambda_{\mu} \big)$ is minimal if $D_{\0} \lambda_{\mu} = 0$. In this section, we define the notion of a minimal models of $\cL_{\infty}$-algebras. 
	
	\begin{defn}
	Suppose that $(L^{\bullet}, d , l_{2}, l_{3}, \cdots, )$ is  an  $L_{\infty}$-algebra. 
	\begin{enumerate}
		\item{$L^{\bullet}$ is \emph{minimal}, if $d = 0$.}   
		\item{If $H^{\bullet} \rightarrow L^{\bullet}$ is a quasi-isomorphism of $L_{\infty}$-algebras, such that $H^{\bullet}$ is minimal, then we say that $H^{\bullet}$ is a minimal model of $L^{\bullet}$. } 
	\end{enumerate}
	\end{defn}

	To produce minimal models of $L_{\infty}$-algebras, the main tools at our disposal are the homological perturbation lemma and homotopy transfer principle. This is a well-known classic tool in homological algebra. We recall it here, and then explain how applying it in ``families" leads to minimal model decompositions for  $\cL_{\infty}$-algebras. 
	
	\subsubsection*{Homological perturbation theory}
	
	For more details on homological perturbation theory, see \cite{crainic2004perturbationlemmadeformations}, \cite{getzler2018maurercartanelementshomotopicalperturbation}
	
	\begin{defn} \label{sideconditions}
	A (strong) homotopy retract context between chain complexes $(V^{\bullet}, d_{V})$ and $ (W^{\bullet}, d_{W})$ consists of a pair of maps $i$, $p$, and a chain homotopy $h: V \rightarrow V[-1]$ such that:
	
	\begin{enumerate}
		\item{$pi = \id_{W}$}
		\item{$\id_{V} - ip = [d, h]$}
		\item{$h^{2} = 0 = ph = hi$ (known as the \emph{side conditions})} 
	\end{enumerate}
	
	and it is diagrammically depicted as:
	
	\[
	(W,d_{W}) \rightleftarrowstack{i  }{p } (V, d_{V})  \circlearrowleft{h} 
	\]

	Futhermore, one says that a degree $1$ map $\mu: V \rightarrow V[1]$ is a perturbation of $d_{V}$ if $(d_{V} + \mu)^{2} = 0$. One says that $\mu$ is a small perturbation, or a convergent perturbation, if 
	
	\[
	\sum\limits_{i = 0}^{\infty} (-1)^{i} ( h \mu )^{i}  =( 1 + h \mu )^{-1} 
	\]
	
	is pointwise convergent.

	\end{defn}

	Then, the homological perturbation lemma states:
	
	\begin{lem} \label{HPL}
	Suppose that $(V, W, i,p, h)$ is a homotopy retract context:
	
	\[
	(W,d_{W}) \rightleftarrowstack{i  }{p } (V, d_{V})  \circlearrowleft{h} 
	\] 
	
	Then, if $\mu$ is a small perturbation of $d$, then the following formulas:
	
	\begin{enumerate}
		\item{ $h_{\mu} =  ( 1 + h \mu )^{-1} h$ }
		\item{$i_{\mu} = ( 1 + h \mu )^{-1} i $ }
		\item{$p_{\mu} = p ( 1 + h \mu )^{-1} = \sum\limits_{n = 0}^{\infty} g (-\mu h)^{n}$}
		\item{$d^{\mu}_{W} =   d + \sum\limits_{n = 0}^{\infty} p (-\mu h)^{n} \mu i$}
	\end{enumerate}
	
	are so that 
	
	\[
	(W,d^{\mu}_{W}) \rightleftarrowstack{i_{\mu}  }{p_{\mu} } (V, d_{V} + \mu )  \circlearrowleft{h_{\mu}} 
	\] 
	
	is another (strong) homotopy retract context. 
	\end{lem}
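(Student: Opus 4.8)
The plan is to verify the Homological Perturbation Lemma by direct algebraic manipulation, using the standard "geometric series" trick. First I would introduce the abbreviation $A := (1 + h\mu)^{-1} = \sum_{i\geq 0}(-h\mu)^i$, noting that small-ness of $\mu$ is exactly what makes $A$ well-defined (pointwise convergent), and record the two fundamental identities $A(1+h\mu) = (1+h\mu)A = \id_V$, equivalently $A = \id_V - Ah\mu = \id_V - h\mu A$ — I will also need the mirror-image operator $A' := (1+\mu h)^{-1} = \sum_{i\geq 0}(-\mu h)^i$, together with the intertwining relation $h A' = A h$ and $\mu A = A' \mu$ (both proved by the same telescoping). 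These are the only nontrivial computational inputs, and they follow formally once convergence is granted.

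Next I would check, in order: (a) $p_\mu i_\mu = \id_W$, using $p_\mu = p A'$ and $i_\mu = A i$ together with the side condition $ph = 0$ (so $p A' = p$, since every term of $A' - \id$ ends in $h$ applied after... actually $pA' = p\sum(-\mu h)^n = p$ because $ph=0$ kills all $n\geq 1$ terms) and $pi = \id_W$; (b) the new differential squares to zero, $(d^\mu_W)^2 = 0$, and that $i_\mu, p_\mu$ are chain maps for $(d_V+\mu, d^\mu_W)$, i.e. $(d_V+\mu) i_\mu = i_\mu d^\mu_W$ and $d^\mu_W p_\mu = p_\mu (d_V+\mu)$; (c) the homotopy identity $\id_V - i_\mu p_\mu = [d_V + \mu, h_\mu] = (d_V+\mu)h_\mu + h_\mu(d_V+\mu)$; and (d) the side conditions $h_\mu^2 = 0$, $p_\mu h_\mu = 0$, $h_\mu i_\mu = 0$. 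Each of these reduces, after substituting the series and using the original side conditions $h^2 = ph = hi = 0$ and $\id_V - ip = [d_V,h]$, to a telescoping cancellation; for instance $h_\mu i_\mu = A h \cdot A i$ and one pushes the $h$ to the left past $A'$ via $hA' = Ah$... I would set this up carefully so that $hi = 0$ makes the whole thing collapse. The verification of (c) is the most delicate: one expands $[d_V+\mu, h_\mu]$, writes $d_V h_\mu + h_\mu d_V$ using $[d_V,h] = \id_V - ip$, and then reorganizes the $\mu$-dependent terms so that the "extra" pieces reassemble into $-i_\mu p_\mu$ plus a telescoping remainder that vanishes.

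The main obstacle I anticipate is bookkeeping rather than conceptual: keeping track of signs (these are $\zz$-graded maps, $h$ has degree $-1$, $\mu$ and $d_V$ degree $+1$, so graded commutators carry Koszul signs) and making sure the rearrangements of the infinite series are legitimate — which is where the convergence hypothesis on $\mu$ and the pointwise (entrywise) nature of the operators must be invoked to justify regrouping terms. A clean way to organize the whole argument, which I would adopt, is to first prove the \emph{deformation retract} version with $h^2 = 0$ dropped, then observe that Lambe–Stasheff's trick of replacing $h$ by $h(d_V h + h d_V)h$ (or by $(\id - ip)h(\id - ip)$) restores all three side conditions without changing $i, p$ up to homotopy; but since the statement as phrased already assumes a strong homotopy retract on the nose, I can instead directly check (d) from the series formulas.

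\begin{proof}[Proof sketch]
Write $A = (1+h\mu)^{-1} = \sum_{i\geq 0}(-h\mu)^i$ and $A' = (1+\mu h)^{-1} = \sum_{i\geq 0}(-\mu h)^i$; both are defined and pointwise convergent by the smallness hypothesis on $\mu$, and one has $hA' = Ah$, $\mu A = A'\mu$, $A = \id - h\mu A$, $A' = \id - \mu h A'$. Using $ph = 0$ gives $p_\mu = pA' = p$, whence $p_\mu i_\mu = p A i = p(\id - h\mu A)i = pi - (ph)\mu A i = \id_W$, proving (1). For (3), one computes $\id_V - i_\mu p_\mu = \id_V - A i p$ and, substituting $ip = \id_V - [d_V,h]$ and expanding $A$, checks by a telescoping cancellation that this equals $(d_V+\mu)h_\mu + h_\mu(d_V+\mu)$; the $\mu$-linear and higher terms produced by $[d_V,h_\mu]$ are precisely the correction terms in the series for $i_\mu p_\mu$. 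The side conditions (2) follow similarly: $h_\mu^2 = Ah\,Ah = A(hA')h = AhhA'' \ldots$ collapses using $h^2 = 0$; $p_\mu h_\mu = p A h = (pA)h$... using $ph=0$; and $h_\mu i_\mu = Ah\,Ai = A(hA')\,i$... using $hi = 0$. One then verifies $d^\mu_W{}^2 = 0$ and that $i_\mu, p_\mu$ intertwine $d_V+\mu$ with $d^\mu_W$ by the analogous manipulations. Throughout, graded (Koszul) signs are carried by treating $h$ as degree $-1$ and $\mu, d_V$ as degree $+1$, and all regroupings of the infinite series are legitimate by pointwise convergence. The resulting data assemble into the claimed strong homotopy retract context.
\end{proof}
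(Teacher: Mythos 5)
The paper never proves Lemma \ref{HPL}: it is quoted as the standard homological perturbation lemma with a pointer to \cite{crainic2004perturbationlemmadeformations} and \cite{getzler2018maurercartanelementshomotopicalperturbation}, so there is no in-text argument to compare against. Your strategy --- geometric series for $A=(1+h\mu)^{-1}$ and $A'=(1+\mu h)^{-1}$, the intertwining relations $hA'=Ah$ and $\mu A=A'\mu$, and collapsing everything with the side conditions $h^2=ph=hi=0$ --- is exactly the standard verification, and the checks you do carry out for $h_\mu^2$, $h_\mu i_\mu$ and $p_\mu h_\mu$ are essentially right.

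There is, however, one concrete false step: you assert $pA'=p(1+\mu h)^{-1}=p$ ``because $ph=0$ kills all $n\geq 1$ terms.'' It does not. The $n$-th term of $pA'$ is $(-1)^n p\,\mu h\,\mu h\cdots \mu h$, so the factor adjacent to $p$ is $\mu$, not $h$; the composite $ph$ never occurs, and $p\mu h\neq 0$ in general. It is $pA=p(1+h\mu)^{-1}$ that collapses to $p$, not $pA'$. The error is not cosmetic: if $p_\mu$ really equaled $p$ it would in general fail to intertwine $d_V+\mu$ with $d_W^\mu$, so your step (b) would be unprovable. (The statement itself is partly to blame: the two expressions in item 3 of the lemma, $p(1+h\mu)^{-1}$ and $\sum_n p(-\mu h)^n$, are not equal; the correct perturbed projection is the latter, $p(1+\mu h)^{-1}$, with ``$g$'' a typo for $p$.) The identity $p_\mu i_\mu=\id_W$ does still hold with the correct $p_\mu=pA'$, but by a different cancellation: in $pA'\cdot Ai$ the cross terms with both series indices $\geq 1$ die by $h^2=0$ (a terminal $h$ from $A'$ meets an initial $h$ from $A$), the terms $p(h\mu)^m i$ die by $ph=0$, the terms $p(\mu h)^n i$ die by $hi=0$, leaving $pi=\id_W$. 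Finally, you should actually carry out the homotopy identity $\id_V-i_\mu p_\mu=(d_V+\mu)h_\mu+h_\mu(d_V+\mu)$ and the chain-map/square-zero checks rather than asserting that they ``telescope'': these are the only places where $\id_V-ip=[d_V,h]$ enters and are the real content of the lemma; everything else is bookkeeping of the kind you have already done.
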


	\paragraph*{The (symmetric) tensor trick} \hfill \\
	Now, we will explain how the homological perturbation lemma, and the ``tensor trick" may be used to prove the homotopy transfer theorem for $L_{\infty}$-algebras. 
	
	\begin{prop}\textnormal{(Homotopy transfer theorem for $L_{\infty}$-algebras)}\\

	Suppose that $(V, W, i,p, h)$ is a homotopy retract context:
	
	\[
	(W,d_{W}) \rightleftarrowstack{i  }{p } (V, d_{V})  \circlearrowleft{h} 
	\] 
	
	Furthermore, suppose that there exists an $L_{\infty}$ structure $(V, d_{V}, l_{2}, \cdots)$ lifting $(V, d_{V})$. Then, there eixsts an $L_{\infty}$ structure $(W, d_{W}, l_{2}^{W}, \cdots)$ lifting $(W, d_{W})$, along with $L_{\infty}$ quasi-isomorphisms $I: (W, d_{W}, l_{2}^{W}, \cdots) \hookrightarrow (V, d_{V}, l_{2}, \cdots)$ and $P: (V, d_{V}, l_{2}^{V}, \cdots) \rightarrow (W, d_{W}, l_{2}^{W}, \cdots)$ lifting $i$ and $p$ respectively (i.e. their linear components are given by $i$ and $p$).
	\end{prop}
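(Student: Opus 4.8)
The plan is to pass to the Koszul-dual coalgebra description of Alternative Definition~2 and reduce the statement to the homological perturbation lemma (Lemma~\ref{HPL}) via the symmetric tensor trick of the preceding paragraph. Recall that an $L_{\infty}$ structure on $V$ is the same datum as a degree $1$, square-zero coderivation $D_{V}$ on the conilpotent cofree cocommutative coalgebra $S^{c}(V[1])$, whose corestriction to the cogenerators records the brackets $l_{k}$; and that an $L_{\infty}$ morphism $W\to V$ is a morphism of dg coalgebras $S^{c}(W[1])\to S^{c}(V[1])$, its linear component being the restriction-to-cogenerators of its corestriction. I would decompose $D_{V}=D_{1}+\delta$, where $D_{1}$ is the coderivation extending the linear part $d_{V}[1]$ — so that $(S^{c}(V[1]),D_{1})$ is just the cofree coalgebra on the complex $(V[1],d_{V}[1])$ — and $\delta$ is the coderivation extending $l_{\geq 2}$.

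First I would promote the given (strong) contraction $(i,p,h)$ to a contraction of cofree coalgebras
\[
\big(S^{c}(W[1]),\,D_{1}^{W}\big)\ \rightleftarrowstack{\mbf{I}_{0}}{\mbf{P}_{0}}\ \big(S^{c}(V[1]),\,D_{1}\big)\ \circlearrowleft{\mbf{H}}
\]
Here $\mbf{I}_{0}=S^{c}(i[1])$ and $\mbf{P}_{0}=S^{c}(p[1])$ are the functorially induced coalgebra morphisms (they are dg coalgebra maps because $i,p$ are chain maps, and $\mbf{P}_{0}\mbf{I}_{0}=S^{c}((pi)[1])=\id$), while $\mbf{H}$ is the symmetrized tensor-trick homotopy, assembled on the length-$n$ part from operators applying $h$ to one tensor slot and $ip$ or $\id$ to the others; one checks $\id-\mbf{I}_{0}\mbf{P}_{0}=[D_{1},\mbf{H}]$ and that $\mbf{H}$ inherits the side conditions of Definition~\ref{sideconditions} from $h$ (with the asymmetric/averaged choice of formula, so that $\mbf{H}^{2}=0$ actually holds). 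Next, $\delta$ is a perturbation of $D_{1}$, and it is \emph{small}: since $l_{k}$ has $k\geq 2$ inputs, $\delta$ strictly lowers the symmetric word length on $S^{c}(V[1])$ while $\mbf{H}$ preserves it, so $\mbf{H}\delta$ is strictly length-decreasing; by conilpotence each element lies in a bounded band of word lengths (and $\delta$ vanishes on $S^{1}$), hence $\mbf{H}\delta$ is locally nilpotent and $\sum_{n\geq 0}(-\mbf{H}\delta)^{n}=(1+\mbf{H}\delta)^{-1}$ is a locally finite sum.

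Now I would apply Lemma~\ref{HPL} to the perturbation $\delta$, obtaining the perturbed data $D^{\delta}_{W}=D_{1}^{W}+\sum_{n\geq 0}\mbf{P}_{0}(-\delta\mbf{H})^{n}\delta\,\mbf{I}_{0}$, $\ \mbf{I}_{\delta}=(1+\mbf{H}\delta)^{-1}\mbf{I}_{0}$, $\ \mbf{P}_{\delta}=\mbf{P}_{0}(1+\mbf{H}\delta)^{-1}$, $\ \mbf{H}_{\delta}=(1+\mbf{H}\delta)^{-1}\mbf{H}$, fitting into a new strong contraction onto $(S^{c}(V[1]),D_{1}+\delta)=(S^{c}(V[1]),D_{V})$. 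The point requiring real care is that this conclusion must be read \emph{internally to conilpotent cocommutative coalgebras}: because $\mbf{I}_{0},\mbf{P}_{0}$ are coalgebra morphisms and $\mbf{H}$ is the tensor-trick homotopy — which satisfies the coderivation-type compatibility $\Delta\mbf{H}=(\mbf{H}\otimes\id+\mbf{I}_{0}\mbf{P}_{0}\otimes\mbf{H})\Delta$ up to symmetry — one checks that $D^{\delta}_{W}$ is again a coderivation and $\mbf{I}_{\delta},\mbf{P}_{\delta}$ are again coalgebra morphisms, the side conditions being exactly what makes the relevant Leibniz identities close. This coalgebra upgrade of Lemma~\ref{HPL} is the genuine content of the proof, and is the step I expect to be the main obstacle. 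Granting it, $D^{\delta}_{W}$ is a square-zero codifferential on $S^{c}(W[1])$, hence defines the sought $L_{\infty}$ structure $(W,d_{W},l_{2}^{W},\ldots)$ on $W$ (its linear part is $D_{1}^{W}$, i.e. $d_{W}$, since every summand of the perturbation series involves $\delta$ and therefore feeds into $l_{k}^{W}$ with $k\geq 2$), and $I:=\mbf{I}_{\delta}$, $P:=\mbf{P}_{\delta}$ are $L_{\infty}$ morphisms $W\to V$ and $V\to W$ (with $P\circ I=\Id_{W}$ as a bonus). Expanding the perturbation series reproduces the familiar rooted-tree formula $l_{n}^{W}=\sum_{T}\pm\,p\circ\big(\text{brackets }l_{k}\text{ plugged along }T,\ h\text{ on internal edges}\big)\circ i^{\otimes n}$.

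Finally I would identify the linear components and deduce the quasi-isomorphism claim. Since $\mbf{H}\delta$ strictly lowers word length, the length-preserving part of $(1+\mbf{H}\delta)^{-1}$ is the identity, so the linear component of $\mbf{I}_{\delta}$ equals that of $\mbf{I}_{0}$, namely $i[1]$, and likewise the linear component of $\mbf{P}_{\delta}$ is $p[1]$; thus $I$ and $P$ lift $i$ and $p$. An $L_{\infty}$ morphism whose linear component is a quasi-isomorphism is an $L_{\infty}$ quasi-isomorphism: filtering the Chevalley--Eilenberg cdga $\CE^{\bullet}=\wh{S}(\,\cdot\,[1]^{\vee})$ by (co)word length, the associated graded of the induced cdga map is $\wh{S}$ applied to the linear part, which is a quasi-isomorphism by the K\"unneth theorem in characteristic $0$, and a comparison of the two complete filtration spectral sequences promotes this to the map itself. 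Since $i$ and $p$ are quasi-isomorphisms by hypothesis, $I$ and $P$ are $L_{\infty}$ quasi-isomorphisms, which completes the proof; apart from the coalgebra-internal perturbation lemma flagged above, everything is degree bookkeeping and the explicit formulas just displayed.
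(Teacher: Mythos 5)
Your argument is essentially the paper's: lift the contraction $(i,p,h)$ to the symmetric (co)algebra via the tensor trick and then apply the homological perturbation lemma (Lemma \ref{HPL}) to the perturbation given by the higher brackets, reading off $q_W$, $I$, $P$ from the perturbation series. The only difference is that you work with coderivations on $S^{c}(V[1])$ rather than derivations on $\wh{S}(V[1]^{\vee})$, which is exactly the dual reformulation the paper itself acknowledges as the standard (and cleaner) one; your added care about smallness via conilpotence and about why $I$, $P$ are quasi-isomorphisms fills in details the paper delegates to the cited reference.
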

	
	For a proof, see \cite{kraft2022introductionlinftyalgebrashomotopytheory}. We will explain the idea. \\
	
	The main idea is that we can define a $L_{\infty}$-algebra by considering the ``Koszul dual"
	picture: an $L_{\infty}$-algebra is defined to be a square $0$ degree $-1$ derivation $q$ on the completed graded symmetric algebra $\wh{S}(V[1]^{\vee})$, such that $q^{2} = 0$. Furthermore the linear component of the derivation $q$, given by $q^{(1)}$, is the dual of a differential on the underlying graded vector space $V^{\bullet}$. We can view the equation $q^{2} = 0$ as defining a small perturbation $q^{(\geq 2)}$ of $q^{(1)}$ so that $ q^{2} = (q^{(1)} + q^{(\geq 2)})^{2} = 0$. \\
	
	Therefore, it appears that one may able to apply the homological perturbation lemma to induce a corresponding deformation $q_{W} = q_{W}^{(1)} + q^{(\geq 2)}_{W}$ of $q_{W}^{(1)} = d_{W}^{\vee}$, which would define an $L_{\infty}$ structure on $W$, lifting $d_{W}$. However, in order to do this, we will need enhance the homotopy retract context 
	\[
	(W,d_{W}) \rightleftarrowstack{i  }{p } (V, d_{V})  \circlearrowleft{h} 
	\] 
	
	To a homotopy retract context 
	
	\begin{equation} \label{tensortrickperturb}
	(\wh{S}(W[1]^{\vee}), q_{W}^{(1)} ) \rightleftarrowstack{ S(p^{\vee}) }{ S(i^{\vee})  } (\wh{S}(V[1]^{\vee}), q^{(1)}_{V})    \circlearrowleft{S(h^{\vee})} 
	\end{equation}
	
	and this is precisely where the ``tensor trick" comes into play.

	Given $i$ and $p$, we can induce morphisms of completed cdgas $S(i^{\vee}): \wh{S}(W[1]^{\vee}) \rightarrow \wh{S}(V[1]^{\vee})$, $S(p^{\vee}) =  \wh{S}(V[1]^{\vee}) \rightarrow \wh{S}(W[1]^{\vee})$ by defining:
	\[
	\begin{array}{lc}
	S(i^{\vee}) = \bigoplus_{k = 0}^{\infty} (i^{\vee})^{\odot k} ,  & 
	S(p^{\vee}) = \bigoplus_{k = 0}^{\infty} (p^{\vee})^{\odot k}
	\end{array}
	\]
	
	Then, to define $S(h^{\vee})$, we need to rely on taking the symmetrization of $T(h^{\vee}) =  \bigoplus_{k=0}^{\infty} \sum\limits_{k=1}^{n}  (p^{\vee} \circ i^{\vee})^{\otimes k - 1} \otimes h^{\vee} \otimes  \id^{\otimes n - k}$. That is, we define 
	
	\[
	S(h^{\vee}) = \SS(T(h^{\vee}))
	\]
	where $\SS(T(h^{\vee}))$ the symmetrization of $T(h^{\vee})$ defined above. Then, one can verify that this gives the homotopy retract context \ref{tensortrickperturb}.\\
	
	Therefore, by applying the homological perturbation lemma \ref{HPL}, we obtain:
	
	\begin{enumerate}
	\item{A perturbation $q_{W} = q_{W}^{(1)} + q_{W}^{(2)} + \cdots$ defining an $L_{\infty}$ structure on $W$, lifting $d_{W}$. Explicitly, one has that
		\begin{equation}\label{perturbeddifferential}
			q_{W} = q^{(1)}_{W} + \sum\limits_{k=0}^{\infty}  S(i^{\vee})  \circ q_{V}^{(\geq 2)} \circ \Big( - S(h^{\vee}) \circ q_{V}^{(\geq 2)} \Big)^{k} \circ  S(p^{\vee})
		\end{equation}
		
	}
	\item{Perturbed morphisms of cdgas $S(i^{\vee})_{q^{\geq 2}}: \wh{S}(V[1]^{\vee}) \rightarrow \wh{S}(W[1]^{\vee})$, $S(p^{\vee})_{q^{\geq 2}}: \wh{S}(W[1]^{\vee}) \rightarrow \wh{S}(V[1]^{\vee}) $, compatible with $q_{W}$ $q_{V}$ respectively, so that they define $L_{\infty}$ morphisms $I: (W,d_{W}, l_{2}^{W}, \cdots) \rightarrow (V, d_{V}, l_{2}^{V}, \cdots)$ and $P: (V, d_{V}, l_{2}^{V}, \cdots) \rightarrow (W,d_{W}, l_{2}^{W}, \cdots)$. Explicitly, by the homological perturbation lemma, we have that 
		
		\begin{align} \label{perturbedmaps}
			& S(i^{\vee})_{q^{\geq 2}} =  S(i^{\vee} ) \circ  ( 1 + q_{V}^{(\geq 2)} \circ S(h^{\vee}) )^{-1}\\
			\nonumber &  S(p^{\vee})_{q^{\geq 2}} = (1 + S(h^{\vee} ) \circ  q_{V}^{(\geq 2)}  )^{-1} \circ S(p^{\vee})
		\end{align}	
	}
	\end{enumerate} 
	
	After dualizing \ref{perturbedmaps}, we see that the $L_{\infty}$ morphism  $I: (W,d_{W}, l_{2}^{W}, \cdots) \rightarrow (V, d_{V}, l_{2}^{V}, \cdots)$ and $P: (V, d_{V}, l_{2}^{V}, \cdots) \rightarrow (W,d_{W}, l_{2}^{W}, \cdots)$  is determined by the ``Taylor coefficients":
	
	\begin{itemize}
	\item{$I^{(1)} = \iota$}
	\item{$I^{(2)} = h(l_{2}^{V}( \iota, \iota )  ) $ }
	\item{In general, we have the recursive identity:
		
		\begin{equation} \label{perturbinclusion} 
			I^{(n)} = \sum\limits_{k=1}^{n-1}  \pm h( l_{2}^{V}( I^{(k) }  , I^{(n-k)}   )  )
		\end{equation}
	}
	\item{For example, $I^{(3)} = h( l_{2}^{V}( \iota,  h(l_{2}^{V}( \iota, \iota ))   )  )  \pm   h( l_{2}^{V}(  h(l_{2}^{V}( \iota, \iota )), \iota )  )$
		
	} 
	
	\end{itemize}
	
	\begin{rmk} \hfill
	\begin{enumerate}
		\item{In the case that $h$, $d$ and $\{ l_{k} \}_{k \geq 2}$ are bounded operators on $V$, then we can use the formulas \ref{perturbeddifferential}, \ref{perturbedmaps} to show that the resulting $L_{\infty}$ structure on $W$ defined via $q_{W}$, and that the $L_{\infty}$ morphisms $I$, $P$ are \emph{bounded} (in the sense of \ref{analytic}).
		} 
		\item{The above is usually formulated with the dual language of \emph{symmetric coalgebras}. In many ways, it is much cleaner to use coalgebras -- for example, one does not have to deal with completed symmetric algebras, and one does not have to dualize the formulae provided by the basic homological perturbation lemma. However, as we have not defined coalgebras, we will stick with the above notions.}
		
		\item{
			Although we can obtain $L_{\infty}$ morphisms $I$ and $P$ by applying the homological perturbation lemma in this way, the resulting perturbation $S(h^{\vee})_{q^{\geq 2}}$ of $h$ does \textbf{not} in general define a homotopy of $L_{\infty}$-algebras. To obtain a homotopy of $L_{\infty}$-algebras, we will need to cite the minimal model decomposition theorem for $L_{\infty}$-algebras \ref{strongminimalmodel}.}
		
	\end{enumerate}
	\end{rmk}
	
	\subsubsection*{Minimal model decompositions}

	\begin{defn} \label{splitting}
	Suppose that $L$ is an $L_{\infty}$-algebra.  Then, a \emph{splitting} is defined to be a degree $-1$ map $h: L^{\bullet} \rightarrow L^{\bullet}[-1]$ such that:
	
	\begin{enumerate}
		\item{$h^{2} = 0$}
		\item{$h d h = h$}
		\item{$d h d = d$} 
	\end{enumerate} 
	\end{defn}
	
	From the definition of a splitting we see that we obtain idempotent operators $dh$ and $hd$, as $(dh)^{2} = dhdh = dh$ and $(hd)^{2} = hdhd = hd$. We also have the ``Laplacian" defined by $\Delta = [d,h] = dh + hd$ which is also idempotent. From this, we obtain a splitting for each $k$:
	
	\[
	L^{k} = \im( d )^{k} \oplus H^{k} \oplus \im(h)^{k} = \im( \Delta ) \oplus \ker( \Delta )
	\]
	
	where $H^{k} = \ker( [d,h])$. 
	
	\begin{lem}
	One has that $H^{k} = H^{k}(L^{\bullet})$. Then, we have a homotopy retract context
	\[
	(H^{\bullet}, 0) \rightleftarrowstack{ i_{H}  }{ \id - \Delta } (L, d)  \circlearrowleft{h} 
	\] 
	\end{lem}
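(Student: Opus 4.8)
The plan is to verify directly that the data $(H^{\bullet},0)$, $(L^{\bullet},d)$, together with the maps $i_H$ (the inclusion of the chosen complement $H^k = \ker([d,h])$ into $L^k$), the projection $\id - \Delta$, and the homotopy $h$, satisfy the three requirements of a strong homotopy retract context in the sense of Definition \ref{sideconditions}. First I would record the elementary consequences of the splitting axioms: from $h^2 = 0$, $hdh = h$, $dhd = d$ one gets that $dh$, $hd$ and $\Delta = dh + hd$ are idempotents, and that $\Delta$ commutes with $d$ and with $h$. In particular $\im(\Delta)^k = \im(d)^k \oplus \im(h)^k$ and $\ker(\Delta)^k = H^k$, giving the decomposition $L^k = \im(d)^k \oplus H^k \oplus \im(h)^k$ already stated in the excerpt; this is the geometric content I will lean on.

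The main identification to nail down is $H^k = H^k(L^{\bullet})$, i.e. that $\ker([d,h])$ is a representative of the $k$-th cohomology of $(L^{\bullet},d)$. For this I would argue: since $\Delta$ is an idempotent commuting with $d$, the complex $(L,d)$ splits as a direct sum of subcomplexes $(\im \Delta, d)$ and $(\ker\Delta, d) = (H^\bullet, d)$. On $H^\bullet$ the differential vanishes: if $x \in H^k$ then $dx = d x - \Delta(dx) $ plus the fact that $d\Delta = \Delta d$ forces $dx \in \ker\Delta$, and on the other hand $dx = dhd(h^{-1}\cdots)$ — more cleanly, $dx = dhdx + hddx = dhdx \in \im(d)\cap\ker\Delta$; but $\Delta$ restricted to $\im d$ is the identity (since $\Delta d = dhd + hdd = dhd = d$), so $dx \in \ker\Delta \cap \im\Delta = 0$, hence $dx=0$. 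Next, $\Delta$ acts as the identity on $\im d \oplus \im h$ and as $0$ on $H^\bullet$, so $(\im\Delta, d)$ is acyclic (it is $\im d \xrightarrow{\sim}$ its own image via $h$, a contractible two-term pattern in each degree), whence $H^k(L^\bullet) = H^k(H^\bullet,0) = H^k$. This is the step I expect to be the most delicate — chasing the idempotents carefully enough to be sure the splitting is $d$-equivariant and that the complementary summand really is acyclic.

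With that in hand the three retract axioms are quick. Axiom (1), $pi = \id_H$: for $x\in H^\bullet$ we have $(\id - \Delta)i_H(x) = x - \Delta x = x$ since $H^\bullet = \ker\Delta$. Axiom (2), $\id_L - i_H p = [d,h]$: the left side is $\id - i_H(\id - \Delta) = \id - (\id - \Delta)$ once one checks $i_H(\id-\Delta) = \id - \Delta$ as an endomorphism of $L$, which holds because $\id-\Delta$ is the projection onto $H^\bullet = \im(\id-\Delta)$ along $\im\Delta$, so its image lies in $H^\bullet$ and $i_H$ is just the identity there; thus $\id_L - i_H p = \Delta = dh+hd = [d,h]$. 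Axiom (3), the side conditions $h^2 = 0 = ph = hi$: $h^2=0$ is axiom (1) of a splitting; $ph = (\id-\Delta)h = h - \Delta h = h - (dh+hd)h = h - dh^2 - hdh = h - 0 - h = 0$ using $h^2=0$ and $hdh=h$; and $hi = h i_H$ vanishes on $H^\bullet = \ker(dh+hd)$ because for $x\in H^\bullet$, $hx = h\Delta x \cdot(\text{via } \Delta x = 0)$ — more directly, $h x = hdh x + h^2 d x$? Here I would instead argue $hx = h(dh+hd)x$ fails since $\Delta x = 0$; rather use $h = hdh$ so $hx = hdh x$, and $dhx \in \im d$; then note $\Delta(dhx) = dhx$, while $\Delta x = 0$ gives $0 = dh x + hdx$, so $hx = hdhx = h\cdot dhx = -h\cdot hdx = -h^2 dx = 0$. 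Finally $[d,h]$ being a chain homotopy is automatic since it is built from $d$ and $h$. I would close by remarking that boundedness of $h$, $d$ propagates to $i_H$ and the projection, so the context is compatible with the analytic setting, though strictly this is not needed for the statement.
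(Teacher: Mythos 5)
Your proof is correct, and in fact the paper states this lemma without any proof at all, so there is nothing to compare against; your argument is the standard one. The verifications all check out: $\Delta$ idempotent, $d\Delta=\Delta d=d$ and $h\Delta=\Delta h=h$, hence $d$ and $h$ vanish on $\ker\Delta$, the complex splits $d$-equivariantly as $(\ker\Delta,0)\oplus(\im\Delta,d)$ with the second summand contracted by $h$, and the side conditions follow. You could streamline a few of the more roundabout passages (e.g.\ $dx = d\Delta x = 0$ and $hx = h\Delta x = 0$ for $x\in\ker\Delta$ give the vanishing of $d|_H$ and the side condition $hi_H=0$ in one line each, rather than the $\ker\Delta\cap\im\Delta$ detour), but nothing is missing.
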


	\begin{cor} \label{basicminimalmodel1}
	There exists an $L_{\infty}$ structure $\big( H^{\bullet}, 0, l_{2}^{H}, \cdots, \big)$ on $H^{\bullet}$, and an $L_{\infty}$ retract context

	\[
	(H^{\bullet}, l_{\bullet \geq 2}^{H} ) \rightleftarrowstack{ I_{H}  }{ P_{H}} (L, l_{\bullet \geq 1} )  \circlearrowleft{ \mbf{\eta}_{H} } 
	\] 
	In particular, $I_{H}: (H^{\bullet}, l_{\bullet \geq 2}^{H} ) \hookrightarrow (L, l_{\bullet \geq 1} ) $ is a minimal model of $(L, l_{\bullet \geq 1} ) $.
	
	\end{cor}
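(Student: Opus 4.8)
The plan is to apply the homotopy transfer theorem (the proposition above) to the homotopy retract context furnished by the preceding lemma, namely $(H^\bullet, 0)\rightleftarrowstack{i_H}{\id-\Delta}(L,d)\circlearrowleft{h}$, taking for the $L_\infty$ lift of $(L,d)$ the given structure $(L,d,l_2,l_3,\dots)$. The theorem then produces an $L_\infty$ structure $(H^\bullet, d_H, l_2^H, l_3^H,\dots)$ lifting $(H^\bullet,0)$; since $d_H=0$ this transferred structure is \emph{minimal} in the sense just defined, and the theorem simultaneously delivers $L_\infty$ quasi-isomorphisms $I_H\colon (H^\bullet, l^H_{\bullet\geq 2})\hookrightarrow (L, l_{\bullet\geq 1})$ and $P_H\colon (L,l_{\bullet\geq 1})\to (H^\bullet, l^H_{\bullet\geq 2})$ whose linear parts are $i_H$ and $\id-\Delta$. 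Because the side conditions $h^2=0=ph=hi$ are preserved by the (symmetric) tensor trick, the perturbed homotopy retract context still satisfies $S(p^\vee)_{q^{\geq2}}\circ S(i^\vee)_{q^{\geq2}}=\id$, which dualizes to $P_H\circ I_H=\Id_{H^\bullet}$; alternatively this is visible directly from the recursion \ref{perturbinclusion} together with $h\circ i_H=0$. This already establishes that $I_H$ is a minimal model.

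First I would next check the boundedness claim: if $L$ is analytic in the sense of Definition \ref{analytic} and $h$ is a bounded splitting, then the explicit perturbation formulas \ref{perturbeddifferential} and \ref{perturbedmaps} express $l_k^H$, $I_H$ and $P_H$ as convergent series of bounded multilinear operators, hence bounded; so the construction descends to $\cL_\infty$-algebras and the retract context is analytic on the relevant convergence domains.

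The hard part will be producing the last piece of data, the $L_\infty$ homotopy $\mbf{\eta}_H\colon L\to L\hat{\otimes}\Omega_{\Delta^1}$ interpolating between $\Id_L$ and $I_H\circ P_H$, for --- as the remark after the transfer theorem warns --- the perturbed operator $S(h^\vee)_{q^{\geq2}}$ is \emph{not} in general an $L_\infty$ homotopy (it is a homotopy of the underlying coderivations but need not be a Maurer--Cartan element of $L\hat{\otimes}\Omega_{\Delta^1}$). I see two routes. The economical one is to invoke the minimal model decomposition theorem \ref{strongminimalmodel}, which is precisely the refinement that upgrades the transfer data to a genuine $L_\infty$ homotopy retract context, leaving the present statement a corollary. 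Otherwise one argues abstractly: $I_H\circ P_H$ and $\Id_L$ are $L_\infty$ endomorphisms of $L$ that induce the \emph{same} map on cohomology --- from $P_HI_H=\Id$ one gets $H(I_H)H(P_H)=\Id_{H(L)}$ since $H(I_H)$ and $H(P_H)$ are mutually inverse isomorphisms --- and over a field of characteristic $0$ two $L_\infty$ morphisms agreeing on cohomology are $L_\infty$-homotopic via the standard obstruction argument; carrying this out in the path object $L\hat{\otimes}\Omega_{\Delta^1}$ and verifying that the obstruction-killing corrections converge (again by the boundedness hypotheses) yields the analytic $\mbf{\eta}_H$. Either way the triple $(I_H,P_H,\mbf{\eta}_H)$ assembles into the asserted $L_\infty$ retract context.
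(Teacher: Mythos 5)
Your proposal is correct and matches the paper's intended argument: the corollary is obtained by applying the homotopy transfer theorem to the retract context $(H^{\bullet},0) \rightleftarrowstack{i_{H}}{\id - \Delta} (L,d) \circlearrowleft{h}$ from the preceding lemma, with minimality immediate from $d_{H}=0$, and with the genuine $L_{\infty}$ homotopy $\bs{\eta}_{H}$ supplied by the strong minimal model theorem \ref{strongminimalmodel}, exactly as the paper's own remark after the transfer theorem indicates. You are, if anything, more explicit than the paper about the side conditions giving $P_{H}\circ I_{H}=\Id$ and about why $S(h^{\vee})_{q^{\geq 2}}$ alone does not furnish the homotopy.
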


	The next lemmas concern lifting properties of $L_{\infty}$-morphisms, and are key for establishing a ``strong form" of the $L_{\infty}$ minimal model theorem \ref{strongminimalmodel}.

	\begin{lem} \label{Linftylift}
	Suppose that $(N, d, 0 , 0 , \cdots )$ is a linear contractible $L_{\infty}$-algebra, and that $\phi: (V, d_{V}, \cdots ) \rightarrow (W, d_{W}, \cdots)$ is a morphisms of $L_{\infty}$-algebras. Then, for any morphism $j: (N,d) \rightarrow (W, d_{W})$ of chain complexes, there exists an $L_{\infty}$-morphism
	
	\[
	\phi' : (V, d_{V}, \cdots) \times (N, d, 0, \cdots ) \rightarrow (W, d_{W}, \cdots)
	\]
	
	such that $\phi' \resto_{V} = \phi$ and $\phi'(n) = j(n)$ for all $n \in N$.
	\end{lem}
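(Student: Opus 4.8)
The plan is to construct $\phi'$ componentwise, building its Taylor coefficients by induction on the total arity, and to exploit that $N$ being acyclic makes every extension obstruction that touches an $N$-input automatically solvable.

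Concretely, an $L_{\infty}$-morphism $\phi'\colon V\times N\to W$ is a family of graded-symmetric degree $(1-n)$ maps $F^{(n)}\colon\bigwedge^{n}(V\oplus N)\to W$ satisfying the morphism relations $\mathcal{R}_{n}$, $n\geq 1$. I would split each $F^{(n)}$ along the bigrading $\bigwedge^{n}(V\oplus N)=\bigoplus_{p+q=n}\bigwedge^{p}V\wedge\bigwedge^{q}N$, writing $F^{(n)}=\sum_{p+q=n}F^{(p\mid q)}$; the two requirements of the lemma then become $F^{(p\mid 0)}=\phi^{(p)}$ for all $p$ and $F^{(0\mid 1)}=j$. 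Because $V\times N$ is a \emph{product} of $L_{\infty}$-algebras, its brackets $l_{k}$ ($k\geq 2$) vanish on any argument string mixing $V$- and $N$-entries, and the higher brackets of $N$ vanish by hypothesis; feeding this into $\mathcal{R}_{n}$ and collecting terms by bigrading, one finds that the only contributions carrying the full arity $n$ are $d_{W}\circ F^{(p\mid q)}$ and $F^{(p\mid q)}\circ(d_{V}\oplus d)$, while every remaining term (inner $l_{k}^{V}$, outer $l_{k}^{W}$, $k\geq 2$) involves only components $F^{(p'\mid q')}$ with $p'+q'<n$. Hence the $(p\mid q)$-part of $\mathcal{R}_{n}$ has the shape $\delta\big(F^{(p\mid q)}\big)=\Theta_{p,q}$, where $\delta(g):=d_{W}\circ g-(-1)^{|g|}g\circ(d_{V}\oplus d)$ and $\Theta_{p,q}$ depends only on $\phi$, the brackets, and the previously constructed components.

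Next I would run the induction on $n=p+q$. The case $n=1$ is exactly the hypotheses that $\phi^{(1)}$ and $j$ are chain maps. For $n\geq 2$: the component $F^{(p\mid 0)}:=\phi^{(p)}$ is forced, and it solves $\mathcal{R}_{n}^{(p\mid 0)}$ because that equation is verbatim the $n$-th relation for the morphism $\phi$; so one only has to produce $F^{(p\mid q)}$ for $q\geq 1$ with $\delta F^{(p\mid q)}=\Theta_{p,q}$. A routine computation (using $d_{W}^{2}=0$, $(d_{V}\oplus d)^{2}=0$, the higher $L_{\infty}$-relations among the brackets, and the validity of all lower $\mathcal{R}$'s) shows that $\Theta_{p,q}$ is a $\delta$-cocycle, so everything reduces to exactness of $\delta$ in degree $2-n$.

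This is where contractibility of $N$ does all the work, and it is the only genuine input. Choosing a contracting homotopy $\sigma$ of $(N,d)$ with $d\sigma+\sigma d=\id_{N}$ and tensoring $\sigma$ into one slot, $N^{\otimes q}$ is contractible for each $q\geq 1$, hence so is its direct summand $\bigwedge^{q}N$, hence so is $\bigwedge^{p}V\wedge\bigwedge^{q}N\cong\bigwedge^{p}V\otimes\bigwedge^{q}N$; consequently $\big(\Hom(\bigwedge^{p}V\wedge\bigwedge^{q}N,\,W),\,\delta\big)$ is acyclic, a contracting homotopy being precomposition with the contraction of the source. Thus for $q\geq 1$ the cocycle $\Theta_{p,q}$ has a $\delta$-primitive, which we take to be $F^{(p\mid q)}$, and a degree count places it in degree $1-n$. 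Assembling all components yields an $L_{\infty}$-morphism $\phi'$ with $F^{(p\mid 0)}=\phi^{(p)}$ — i.e.\ $\phi'\circ\iota_{V}=\phi$ for the strict inclusion $\iota_{V}\colon V\hookrightarrow V\times N$, whose composite with $\phi'$ reads off exactly the components $F^{(\bullet\mid 0)}$ — and with $F^{(0\mid 1)}=j$, which is the identity $\phi'(n)=j(n)$ on $N$. The main obstacle is the bookkeeping in the second and third paragraphs: confirming that after the $(p\mid q)$-decomposition no $n$-ary term other than the two $\delta$-terms survives (in particular that the mixed brackets genuinely drop out) and that the collected remainder $\Theta_{p,q}$ is $\delta$-closed; granting that, acyclicity of $\bigwedge^{q}N$ for $q\geq 1$ finishes the proof.
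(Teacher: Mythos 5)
The paper states this lemma without proof (the surrounding text defers to the literature, e.g.\ Manetti, for this circle of results), so there is no in-paper argument to compare against; judged on its own, your obstruction-theoretic construction is the standard proof of this statement and is sound. The three load-bearing claims all check out: the morphism relations $\mathcal{R}_n$ do reduce to $\delta(F^{(p\mid q)})=\Theta_{p,q}$ with $\Theta_{p,q}$ built from strictly lower-arity components (the inner brackets of the product vanish on mixed strings and preserve the $N$-count, so the bigrading is respected and the $(p\mid 0)$ relations close up on $\phi$ alone); the cocycle property of the obstruction given the lower relations is the standard fact from the obstruction theory of $L_{\infty}$-morphisms; and $[d,\sigma\otimes\id^{\otimes(q-1)}]=\id$ makes $N^{\otimes q}$, hence its summand $\bigwedge^{q}N$, hence $\Hom(\bigwedge^{p}V\wedge\bigwedge^{q}N,W)$ contractible for $q\geq 1$, with the primitive landing in the correct degree $1-n$. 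One caveat worth recording: your reading of ``$\phi'(n)=j(n)$'' as the condition $F^{(0\mid 1)}=j$ on the linear Taylor coefficient is the only tenable one --- the full restriction $\phi'\resto_{N}$ cannot in general be the strict chain map $j$, since already $\delta F^{(0\mid 2)}$ must cancel a term of the form $l_{2}^{W}(j,j)$, forcing nonzero higher coefficients on pure $N$-inputs.
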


	The next proposition may remind the reader of a ``homotopy minimal chart" \ref{minimalatlas} from chapter $1$ -- and indeed, applying the strong form of the minimal model theorem, in the situation where a quasi-smooth space is a local $L_{\infty}[1]$-bundle exactly produces a homotopy minimal chart. 
	For details on this version of the $L_{\infty}$ minimal model theorem, see \cite{Manetti2022}.
	
	\begin{prop} \emph{(Strong form of the $L_{\infty}$ minimal model theorem)} \label{strongminimalmodel}\\
	Any $L_{\infty}$-algebra is the product of a minimal model and a linear contractible $L_{\infty}$-algebra (i.e. a chain complex). More precisely, for each choice of minimal model  $I_{H}: (H^{\bullet}, l_{\bullet \geq 2}^{H} ) \hookrightarrow (L, l_{\bullet \geq 1} )$, there exists a linear contractible $L_{\infty}$-algebra $(N, d, 0, 0 ,\cdots )$ and an $L_{\infty}$ isomorphism
	
	\[
	\phi:  (H^{\bullet}, l_{\bullet \geq 2}^{H} ) \times (N, d, 0, 0 ,\cdots ) \longiso (L, l_{\bullet \geq 1} )
	\]
	\end{prop}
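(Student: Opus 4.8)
The plan is to feed the splitting data underlying the minimal model into the lifting Lemma \ref{Linftylift}. Fix a splitting $h$ of $(L^{\bullet},d)$ in the sense of Definition \ref{splitting}; by Corollary \ref{basicminimalmodel1} we may take $I_{H}$ to be the associated minimal model, so that its linear component is the canonical inclusion $i_{H}\colon H^{\bullet}\hookrightarrow L^{\bullet}$ for the decomposition $L^{\bullet}=\im(d)^{\bullet}\oplus H^{\bullet}\oplus\im(h)^{\bullet}$ furnished by $h$. (Any minimal model is $L_{\infty}$-isomorphic to one of this form, so it suffices to treat this case; one then transports the conclusion along such an isomorphism of the minimal algebras.)

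Next, set $N^{\bullet}:=\im(d)^{\bullet}\oplus\im(h)^{\bullet}$, equipped with the restricted differential $d|_{N}$ and all higher brackets equal to zero. The splitting identities $h\,d\,h=h$ and $d\,h\,d=d$ make $d\colon\im(h)^{\bullet}\to\im(d)^{\bullet+1}$ and $h\colon\im(d)^{\bullet}\to\im(h)^{\bullet-1}$ mutually inverse, so $(N^{\bullet},d|_{N})$ is acyclic; hence $(N,d,0,0,\cdots)$ is a linear contractible $L_{\infty}$-algebra, and the inclusion $j\colon(N^{\bullet},d|_{N})\hookrightarrow(L^{\bullet},d)$ is a morphism of chain complexes. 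Now apply Lemma \ref{Linftylift} with $(V,d_{V},\cdots)=(H^{\bullet},l^{H}_{\bullet\geq 2})$, $(W,d_{W},\cdots)=(L,l_{\bullet\geq 1})$, $\phi=I_{H}$, the linear contractible algebra $(N,d,0,\cdots)$, and the chain map $j$. This yields an $L_{\infty}$-morphism
\[
\phi'\colon (H^{\bullet},l^{H}_{\bullet\geq 2})\times(N,d,0,0,\cdots)\longrightarrow(L,l_{\bullet\geq 1})
\]
with $\phi'|_{H}=I_{H}$ and $\phi'(n)=j(n)$ for all $n\in N$.

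It remains to check that $\phi'$ is an isomorphism. Its source has underlying graded vector space $H^{\bullet}\oplus N^{\bullet}=H^{\bullet}\oplus\im(d)^{\bullet}\oplus\im(h)^{\bullet}$, and its linear component restricts to $i_{H}$ on $H^{\bullet}$ and to $j$ on $N^{\bullet}$; thus $(\phi')^{(1)}=i_{H}\oplus j$ is precisely the splitting identification with $L^{\bullet}$, hence a linear isomorphism. Finally, an $L_{\infty}$-morphism whose linear component is invertible is an $L_{\infty}$-isomorphism: in the Koszul dual picture $\phi'$ induces a morphism of complete filtered cdgas between $\wh{S}(L[1]^{\vee})$ and $\wh{S}\big((H\oplus N)[1]^{\vee}\big)$ which on the $k$-th associated graded piece is $\Sym^{k}$ of the transpose of $(\phi')^{(1)}$, hence an isomorphism, so $\phi'$ is an isomorphism; equivalently one builds the inverse $L_{\infty}$-morphism by solving recursively for its Taylor coefficients, each step being invertible because $(\phi')^{(1)}$ is. This produces the desired $L_{\infty}$-isomorphism $\phi\colon(H^{\bullet},l^{H}_{\bullet\geq 2})\times(N,d,0,0,\cdots)\longiso(L,l_{\bullet\geq 1})$.

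The main obstacle is essentially bookkeeping rather than a deep step: one must invoke Lemma \ref{Linftylift} with precisely the right chain map $j$ so that $(\phi')^{(1)}$ is the splitting \emph{isomorphism} and not merely a quasi-isomorphism, one must keep the product $L_{\infty}$-structure on $(H^{\bullet},l^{H}_{\bullet\geq 2})\times(N,d,0,\cdots)$ fixed throughout, and one should isolate "invertible linear component $\Rightarrow$ $L_{\infty}$-isomorphism" as a separate standard lemma, since it is the formal engine promoting $\phi'$ from a morphism to an isomorphism. If in addition one wants this in the bounded/analytic ($\cL_{\infty}$) setting, one checks via the explicit perturbation formulas that $\phi'$ and its inverse are bounded whenever $h$, $d$ and the brackets are, and that the contractible factor $N$ inherits a convergence domain from that of $L$.
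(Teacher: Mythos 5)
Your argument is correct, and it is worth noting that the paper itself gives no proof of Proposition \ref{strongminimalmodel} at all --- it simply defers to \cite{Manetti2022}. What you have written is essentially the standard proof of the strong minimal model theorem, assembled from the paper's own ingredients: the splitting $L^{\bullet}=\im(d)\oplus H^{\bullet}\oplus\im(h)$ from Definition \ref{splitting}, the transferred minimal model of Corollary \ref{basicminimalmodel1}, and the lifting Lemma \ref{Linftylift} applied to the acyclic subcomplex $N=\im(d)\oplus\im(h)$ (your verification that $N$ is an acyclic subcomplex via $hdh=h$ and $dhd=d$ is right), followed by the standard fact that an $L_{\infty}$-morphism with invertible linear part is an $L_{\infty}$-isomorphism. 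Two small points of care. First, your reduction ``any minimal model is $L_{\infty}$-isomorphic to one of this form'' is fine but deserves one line of justification: given two minimal models $I_{H}$ and $I_{H'}$, the composite $P_{H'}\circ I_{H}$ has linear part a quasi-isomorphism between complexes with zero differential, hence an isomorphism of graded vector spaces, hence is an $L_{\infty}$-isomorphism $H\longiso H'$; transporting along it proves the statement as literally written (which only asks for \emph{some} isomorphism from $(H^{\bullet},l^{H}_{\geq 2})\times N$, not one restricting to $I_{H}$ on the first factor --- if one insisted on the latter, the transported map would only agree with $I_{H}$ up to homotopy). Second, your proof leans on Lemma \ref{Linftylift}, which the paper also states without proof, so the argument is only as self-contained as that lemma; this is acceptable since it is a paper-internal citation, but it is where the real work of the theorem is hiding.
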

	
	From this, one can prove the following ``$L_{\infty}$ enrichment" of homotopy retract contexts:

	\begin{prop} \label{transferLinfty} (The ``strong" $L_{\infty}$ transfer theorem)\\
	Suppose that $(V,W,i,p,h)$ is a homotopy retract context. Then, if there is an $L_{\infty}$-algebra structure $(V,d_{V}, l_{2}, \cdots )$  (i.e. an $L_{\infty}$ structure ``lifting" $d_{V}$), then there exist:
	
	\begin{enumerate}
		\item{An $L_{\infty}$ structure $(W, d_{W}, l_{2}^{W}, \cdots )$ for $W$, lifting $d_{W}$}
		\item{An $L_{\infty}$ quasi-isomorphism $I_{W}: W \rightarrow V$, with $i$ being the linear term of $I_{W}$.}
		\item{An $L_{\infty}$ quasi-isomorphism $P_{W}: V \rightarrow W$ with $p$ being the linear term of $P_{W}$.} 
		\item{An $L_{\infty}$ homotopy $\bs{\eta}: (V, d_{V}, l_{2}, \cdots) \rightarrow (V, d_{V}, l_{2}, \cdots) \otimes \Omega_{\Delta^{1}}$, such that $\bs{\eta} \resto_{t = 0} = \Id$, $\bs{\eta} \resto_{t = 1} = I_{W} P_{W}$  }
	\end{enumerate}
	\end{prop}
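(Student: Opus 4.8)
The plan is to obtain items (1)--(3) from the tensor-trick argument already recalled above, and to construct the $L_\infty$ homotopy of (4) by re-running the proof of the strong minimal model theorem \ref{strongminimalmodel}. For (1)--(3): applying the homological perturbation lemma \ref{HPL} to the induced homotopy retract context \ref{tensortrickperturb} of completed Chevalley--Eilenberg cdgas produces a square-zero degree $1$ derivation $q_W = q_W^{(1)} + q_W^{(\geq 2)}$ on $\wh{S}(W[1]^\vee)$, hence an $L_\infty$ structure $(W, d_W, l_2^W, \dots)$ lifting $d_W$, together with perturbed cdga morphisms whose duals are $L_\infty$ quasi-isomorphisms $I_W \colon W \to V$ and $P_W \colon V \to W$ with linear terms $i$ and $p$ respectively; when the input data $h, d_V, \{ l_k \}_{k \geq 2}$ are bounded, the explicit formulas \ref{perturbeddifferential} and \ref{perturbedmaps} show $q_W$, $I_W$, $P_W$ are bounded as well, so the construction is valid for $\cL_\infty$-algebras.

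The genuinely new ingredient is the $L_\infty$ homotopy $\bs\eta$. As warned in the remark following the tensor trick, the perturbed operator $S(h^\vee)_{q^{\geq 2}}$ is not an $L_\infty$ homotopy, so I would not try to dualize it; instead I would first split the ambient complex. From the side conditions $p i = \id_W$, $\id_V - ip = [d_V, h]$, $h^2 = ph = hi = 0$ one reads off that $ip$ is an idempotent chain endomorphism of $(V, d_V)$ with image $i(W)$ and kernel $N := \ker p$ (using injectivity of $i$); since $p$ is a chain map, $N$ is a subcomplex, and since $\im h \subseteq \ker p$ while $[d_V, h]\resto_N = \id_N$, the operator $h\resto_N$ is a contracting homotopy of $(N, d_V\resto_N)$. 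Hence $V \cong i(W) \oplus N$ as complexes, with $(N, d_N := d_V\resto_N, 0, 0, \dots)$ a linear contractible $L_\infty$-algebra in the sense of \ref{Linftylift}.

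Now I would apply the lifting lemma \ref{Linftylift} to the $L_\infty$-morphism $I_W \colon (W, d_W, l_2^W, \dots) \to (V, d_V, l_2, \dots)$ and the chain inclusion $j \colon (N, d_N) \hookrightarrow (V, d_V)$, obtaining an $L_\infty$-morphism $\Psi \colon (W, d_W, l_2^W, \dots) \times (N, d_N, 0, \dots) \to (V, d_V, l_2, \dots)$ whose restriction to the first factor is $I_W$ and whose restriction to the second factor is $j$. Its linear term is therefore $i \oplus j \colon W \oplus N \to V$, which is an isomorphism, so $\Psi$ is an $L_\infty$-isomorphism (this is the same argument that proves \ref{strongminimalmodel}, now with the transferred $(W, d_W, l_2^W, \dots)$ in place of a minimal model, and $\Psi^{-1}$ is again bounded because its linear term is). On the product $\mathfrak P := (W, d_W, l_2^W, \dots) \times (N, d_N, 0, \dots)$ there is an evident $L_\infty$ homotopy $\bs\kappa$ from $\Id_{\mathfrak P}$ to the composite $\mathfrak P \twoheadrightarrow (W, d_W, l_2^W, \dots) \hookrightarrow \mathfrak P$: on the $N$-factor it is the standard path-object homotopy built from the contraction $h\resto_N$ (which exists because $N$ carries no higher brackets and is acyclic), and on the $W$-factor it is the constant homotopy. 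Transporting along $\Psi$, that is, setting $\bs\eta := (\Psi \otimes \id) \circ \bs\kappa \circ \Psi^{-1}$, yields an $L_\infty$ homotopy of $(V, d_V, l_2, \dots)$ with $\bs\eta\resto_{t=0} = \Id_V$ and $\bs\eta\resto_{t=1} = \Psi \circ \iota_W \circ \pr_W \circ \Psi^{-1}$.

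It remains to identify $\Psi \circ \iota_W \circ \pr_W \circ \Psi^{-1}$ with $I_W P_W$, and this is the step I expect to be the main (purely bookkeeping) obstacle. Since $\Psi \circ \iota_W = \Psi\resto_W = I_W$, it suffices to take $P_W := \pr_W \circ \Psi^{-1}$; its linear term is $\pr_W \circ (i \oplus j)^{-1} = p$ (because $pi = \id_W$ and $pj = 0$), so this is a legitimate $L_\infty$ quasi-isomorphism lifting $p$, and I would simply use it in place of the $P_W$ produced in step (3) — at which point one even gets $P_W I_W = \Id_W$ for free and the whole package becomes an $L_\infty$ homotopy retract context. Apart from this matching, the only checks left are routine: that $\bs\kappa$ is a genuine $L_\infty$ homotopy (which, since the relevant higher brackets vanish, reduces to $h\resto_N$ being a contraction), that the product of $L_\infty$-algebras and the $\Omega_{\Delta^1}$-tensor behave as expected, and that all maps remain analytic; none of these presents a conceptual difficulty beyond organizing the data.
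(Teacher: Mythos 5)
Your proposal is correct and follows essentially the route the paper indicates: the paper states that Proposition \ref{transferLinfty} follows from the strong minimal model theorem \ref{strongminimalmodel} (itself proved via the lifting lemma \ref{Linftylift} and a product decomposition by a linear contractible factor), and your argument carries out exactly that mechanism with the transferred structure on $W$ in place of the minimal model, using $N = \ker p$ (contracted by $h\resto_{N}$ thanks to the side conditions) as the linear contractible complement. The one refinement worth keeping explicit is the point you already flag: the projection produced by the perturbation lemma need not coincide with $\pr_{W}\circ\Psi^{-1}$, so one should take $P_{W} := \pr_{W}\circ\Psi^{-1}$ as the morphism asserted in item (3), which is legitimate since its linear term is $p$.
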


	\begin{cor}
	Suppose that $\bL(U)$ is an $\cL_{\infty}$-algebra, then an $L_{\infty}$ minimal model decomposition $L' = H^{\bullet} \times N^{\bullet} \longiso L$ defines a local slice chart $\Phi: \bL'(U') \rightarrow \bL(U)$ near the origin. 
	\end{cor}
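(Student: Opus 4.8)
The plan is to promote the $L_\infty$-isomorphism $\phi\colon L' = H^\bullet \times N^\bullet \longiso L$ furnished by Proposition \ref{strongminimalmodel} to an open immersion of the associated $L_\infty[1]$-bundles, after restricting to a sufficiently small convergence domain. First I would fix such a domain. Since $\bL(U)$ is an $\cL_\infty$-algebra its structure operations are bounded in the sense of Definition \ref{analytic}, and $\phi$ is assembled from the homological perturbation formulas \ref{perturbeddifferential} and \ref{perturbedmaps}; hence, by the Remark following Proposition \ref{transferLinfty}, the Taylor coefficients $\phi^{(k)}$ are themselves bounded. Arguing as in the proof of Proposition \ref{localquasismooth}, there is then a polydisc $U'$ about the origin in $(L')^1 = H^1 \times N^1$ on which the Maurer--Cartan function of $L'$ and the series defining $\phi$ both converge, and $U'$ may be shrunk so that the Maurer--Cartan pushforward $\phi_0\colon U' \to U$ (whose linear term is the linear isomorphism $\phi^{(1)}$, and which fixes the origin) is a biholomorphism onto an open neighbourhood of $\0$ inside $U$. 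This makes $\bL'(U')$ an $\cL_\infty$-algebra via Construction \ref{koszulconstruction}.

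Next I would build $\Phi$. Dualizing $\phi$ gives an isomorphism of Chevalley--Eilenberg cdgas intertwining the homological vector fields; localizing its coefficients over the convergence domains $U'$ and $\phi_0(U')$, in the manner of Definition \ref{Linftymorphisms} and the bundle construction, yields a morphism of $L_\infty[1]$-bundles $\Phi\colon \bL'(U') \to \bL(U)$ compatible with the homological vector fields and covering the open embedding $\phi_0\colon U' \hookrightarrow U$. Because $\phi$ is an $L_\infty$-\emph{isomorphism}, the dualized map is an isomorphism of sheaves of cdgas, so $\Phi$ restricts to an isomorphism $\bL'(U') \longiso \bL(U)|_{\phi_0(U')}$; thus $\Phi$ is an open immersion, i.e.\ a dg chart of $\bL(U)$ near the origin.

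Finally I would check that this chart is a \emph{slice} chart, that is, the $\cL_\infty$-analogue of a homotopy minimal chart (Definition \ref{minimalatlas}). The factor $H^\bullet$ is minimal, $l^H_1 = d_H = 0$, and by our standing quasi-smoothness hypothesis the cohomology of $L$ is concentrated in degrees $[0,1]$, so $H^\bullet = H^0 \oplus H^1$ and the $H$-factor of $\bL'(U')$ is a local quasi-smooth dg space that is minimal at the origin in the sense of Lemma \ref{mindim}. The complementary factor $N^\bullet$ is the linear contractible (acyclic) chain complex produced by Proposition \ref{strongminimalmodel}. In the Hodge-type splitting $L^1 = \im(d) \oplus H^1 \oplus \im(h)$ used to manufacture the minimal model, the gauge directions $\im(d^0\colon L^0 \to L^1)$ and the coexact directions are absorbed into the $N^1 \cong \im(d) \oplus \im(h)$ summand, so $\bH(U'_H)$ is transverse to the gauge distribution; the resulting product decomposition $\bL'(U') \cong \bH(U'_H) \times \bN(U'_N)$ exhibits $\Phi$ as a local slice chart near the origin.

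I expect the main obstacle to be the analytic bookkeeping in the first step: one must verify both that the $L_\infty$-isomorphism produced by Proposition \ref{strongminimalmodel} is bounded and that the convergence polydisc $U'$ can be chosen small enough that $\phi_0(U') \subseteq U$ while remaining a convergence domain for $L'$. This is exactly the kind of estimate carried out in the proof of Proposition \ref{localquasismooth}, and it is guaranteed in the bounded (or Fr\'echet-analytic) setting by the Remark after Proposition \ref{transferLinfty}; once it is in place, the remainder is a routine transcription of the $L_\infty$-isomorphism into the Koszul-dual geometric language of Construction \ref{koszulconstruction}.
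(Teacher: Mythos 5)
The paper states this corollary without proof, as an immediate consequence of Proposition \ref{strongminimalmodel} and the strong transfer theorem, and your argument supplies exactly the intended route: boundedness of the Taylor coefficients coming from the perturbation formulas gives a convergence polydisc $U'$, the invertible linear part of the $L_{\infty}$-isomorphism makes the Maurer--Cartan pushforward $\phi_{0}$ a local biholomorphism fixing the origin, and the Hodge-type splitting places the gauge directions $\im(d^{0})$ inside the linear contractible factor $N^{1}$ so that the minimal factor is transverse to the gauge distribution and hence a slice. This is correct; the only caveat, which you already flag, is that one must know the isomorphism of Proposition \ref{strongminimalmodel} is built from bounded data (automatic in finite dimensions, and the relevant hypothesis in the Fr\'echet case).
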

	
	If $\bL(U)$ is an $\cL_{\infty}$-algebra, then we say that $\Phi$ is a homotopy minimal atlas for $\bL(U)$ if for each $\mu \in \tMC(L,U)$ there is a choice of minimal model decomposition $L_{\mu}' = H^{\bullet}_{\mu} \times N_{\mu} \longiso L_{\mu}$ 
	so that $\Phi_{\mu}: \bL'_{\mu}(U) \iso \bL(U)$ defines a local slice chart for $\bL(U)$ centered at $\mu$.

	\section{Homotopy minimal atlases for $\cL_{\infty}$-algebras}

	\noindent Now, we consider the above in families. We say that $\eta$ defines a family of gauges for an $\cL_{\infty}$-algebra $\bL(U)$, if for each $\mu \in \tMC(L,U)$, $\eta_{\mu}$ is a gauge (or a contraction), for the $L_{\infty}$-algebra $L_{\mu}$. 
	
	\begin{defn} \label{gauge}
	Suppose that $\bL(U)$ is an $\cL_{\infty}$-algebra. A family of gauges (or simply just a gauge), is the data of a degree $-1$ bundle morphism $ \eta: \bL^{\bullet} \rightarrow \bL^{\bullet}[-1]$  such that: $\eta^{2} = 0$, $\eta \delta  \eta = \eta$ and $\delta  \eta \delta  = \delta $.   
	\end{defn}

	Similar to a splitting, gauge $\eta$ defines a ``Laplacian", given by:
	
	\begin{equation}\label{laplacian}
	\Delta_{U} = \eta \delta + \delta \eta = [\eta, \delta]
	\end{equation}
	
	So that $\Delta_{U} : \bL^{\bullet}_{U} \rightarrow \bL^{\bullet}_{U}$ is a bundle map.
	
		\begin{defn}
		The harmonic distribution is defined as 
		
		\[
		\mbf{H}_{U} = \ker( \Delta_{U}  ) \hookrightarrow T_{U}
		\] 
	\end{defn}

	While definition \ref{gauge} looks a lot like definition \ref{splitting} of a splitting, the main difference is that  $\eta_{\mu}$ does not in general define a deformation retract context onto the cohomology of $L_{\mu}$. The reason is because we may not have $\id = d_{\mu} \eta_{\mu} +  \eta_{\mu} d_{\mu}$, although $[d_{\mu} ,\eta_{\mu}]$ will be an automorphism of $\im ( [ d_{\mu}, \eta_{\mu}])$. This is why one needs a ``Green's operator":
	
	\begin{defn} (Green's operator)\\
	For each $\mu$, we define the corresponding splitting homotopy for $L_{\mu}$ to be $\bs{\eta}_{\mu} =  \eta_{\mu} G_{\mu}$, where $G_{\mu}$ is the inverse to $[d_{\mu}, \eta_{\mu}]$ on $\im([d_{\mu},\eta_{\mu}])$ and $0$ on $\ker([d_{\mu}, \eta_{\mu}])$. We say that $G_{\mu}$ is the Green's operator associated to the gauge $\eta_{\mu}$.
	\end{defn}

	Therefore, for each $\mu \in X$, the fiber of the gauge and harmonic distribution $\ker( \Delta_{U})$  at $\mu$ defines a minimal model homotopy retract context:
	
	\[
	H_{\mu}^{\bullet} \rightleftarrowstack{ I_{\mu}  }{ P_{\mu} } L_{\mu} \circlearrowleft{ \bs{\eta}_{\mu} } 
	\]

	Suppose that $X^{\red} = \bigcup  X_{(d)}$ denotes the stratification of $X^{\red}$ by embedding dimension. Then, on each smooth stratum $X_{(d)}$, $\mbf{H}_{(d)} := \mbf{H} \resto_{X_{(d)}}$ along with $\im(\Delta_{U}) \resto_{X_{(d)}}$ define integrable foliations. For each $\mu \in X_{(d)}$, a trivialized foliation chart $U_{\mu}$ defines a homotopy minimal chart for $\mu$
	
	\[
	\bL_{\mu}'(U_{\mu}') = \mbf{H}_{\mu}(U_{\mu/\g}^{\min} ) \times \wh{N}_{\mu} 
	\]
	
	such that for $\mu$ and $\nu$ in $X_{(d)}$, there is a commutative diagram

	\[
	\begin{tikzcd}
	\mbf{H}_{\mu}(U_{\mu/\g}^{\min} ) \times \wh{N}_{\mu}  \arrow[r, "\Phi_{\mu,\nu}^{(d)}"]  \arrow[d, "P_{\mu}" swap , shift right  = 1em ] & 
	\mbf{H}_{\nu}(U_{\nu/\g}^{\min} ) \times \wh{N}_{\nu}  \arrow[d, "P_{\nu}" , shift left = 1em ] \\
	\mbf{H}_{\mu}(U_{\mu/\g}^{\min} )  \arrow[u, "I_{\mu}" swap] \arrow[r, "\Phi^{\min}_{\mu \nu}" ]  &  \mbf{H}_{\nu}(U_{\nu/\g}^{\min} )   \arrow[u, "I_{\nu}"] 
	\end{tikzcd}
	\]
	
	\begin{defn}
	We define the \emph{stratified} homotopy minimal atlas $\Phi$ associated to a gauge $\eta$ to be the union of foliation atlases $\Phi = \bigcup_{d} \Phi^{(d)}$ 
	\end{defn}

	\subsection{$\cL_{\infty}^{\Omega}$-algebras and homotopy gauge actions}

	In this section, we define the notion of an $\cL_{\infty}^{\Omega}$-algebra. Briefly speaking, we can think of an $\cL_{\infty}^{\Omega}$-algebra as an $\cL_{\infty}$-algebra $\bL(U)$ with a \emph{homotopy} gauge group action by an analytic group $\gG_{\Omega}$. While this notion may be much more general than what will be defined, we restrict ourselves to a simpler notion of a homotopy gauge group action as it will be sufficient for the setting of Chern-Simons theory. In other words, we will define a homotopy gauge group action by considering a fixed dgla model, i.e. a strictification of $\bL(U)$, with a \emph{strict} gauge group action via $\gG_{\Omega}$. Throughout the paper, whenever it is convenient, we may use the notation $\gG$ in place of $\gG_{\Omega}$.

	\begin{defn} 
	An $\cL^{\Omega}_{\infty}$-algebra is an $\cL_{\infty}$-algebra $(L,U)$, along with the data of a dgla $L_{\Omega}$ and a fixed local analytic $L_{\infty}$-homotopy retract context:
	
	\begin{equation}\label{omegamodel}
		(L,U) \substack{ \xrightarrow{\mbf{I}} \\ \xleftarrow{\mbf{P}}} (L_{\Omega}, U_{\Omega}  ) \circlearrowleft_{\bs{\eta}_{\Omega} }
	\end{equation}
	for some domain $U_{\Omega}  \subset L_{\Omega}^{1}$. We will use the notation $(L \mdl L_{\Omega}, U)$ to denote an $\cL_{\infty}$-algebra $(L,U)$ together with a dgla model $(L_{\Omega}, U_{\Omega} )$ and a retract context \ref{omegamodel}. 
	\end{defn}
	
	\begin{rmk}
	It is a fact that every $L_{\infty}$-algebra can be functorially strictified into a dgla (for example, see \cite{krizMay}). So, a $\cL^{\Omega}_{\infty}$-algebra fixes a choice of a strictification $(L_{\Omega},U_{\Omega} )$ of an $\cL_{\infty}$-algebra $(L,U)$, and an \emph{analytic} homotopy retract context between $(L,U)$ and $(L_{\Omega}, U_{\Omega} )$.
	\end{rmk}

	We will see in the application to Chern-Simons theory, one can think of $\ref{omegamodel}$ as defining a way to transfer ``large gauge transformations" of a dgla model $L_{\Omega}$ to $L$. Large gauge transformations, in particular, are automorphisms of $L_{\Omega}$ that are not contained in the subgroup $\exp(L^{0}_{\Omega})$. This is a way in which the above data may be viewed as endowing an $\cL_{\infty}$-algebra with the data of a \emph{homotopy gauge group action}.\\
	
	Suppose that a group $\gG = \gG_{\Omega}$ acts on $L_{\Omega}$ by dgla automorphisms. Furthermore, suppose that $\mu \in \tMC(L,U)$ is such that $g \star \mu := \mbf{P} \big( g \cdot \mbf{I}(\mu) \big) \in U$. Then, we define the $L_{\infty}$ quasi-isomorphism:

	\begin{equation} \label{homotopygauge} 
	\Phi_{(g,\mu)}(-) :=   \mbf{P}^{g \cdot \mu }(  g \cdot \mbf{I}^{\mu}  ( -  ))  
	\end{equation}
	For sufficiently small open neighbourhoods $U_{\mu}$ and $U_{g \star \mu}$ of $\mu$ and $g \star \mu$ respectively, 

	 we may consider $\Phi_{(g,\mu)}$ as a morphism of $\cL_{\infty}$-algebras $\Phi_{(g,\mu)}: (L^{\mu}, U_{\mu}) \rightarrow (L^{ g \star \mu} , U_{g \star \mu})$.

	\begin{lem} \label{homotopyaction}
	For $g, g' \in \gG$, if $\Phi_{(g,\mu)}$ and $\Phi_{(g', g \star \mu)}$ are composable, then we have a homotopy between morphisms of $\cL_{\infty}$-algebras 
	
	\[
	\Phi_{(g', g \star \mu) } \circ \Phi_{(g,\mu)}  \sim \Phi_{ (g' \cdot g, \mu) } 
	\] 	
	\end{lem}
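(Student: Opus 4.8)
The plan is to unwind the definition \eqref{homotogygauge} of the homotopy gauge action and reduce the statement to the associativity of the strict group action on the dgla model $L_\Omega$, together with the fact that $\mbf I$ and $\mbf P$ form a homotopy retract context. Explicitly, write $g\star\mu = \mbf P(g\cdot\mbf I(\mu))$ and $\nu := g\star\mu$; then by definition
\[
\Phi_{(g',\nu)}\circ\Phi_{(g,\mu)}(-) = \mbf P^{g'\cdot\nu}\Big(g'\cdot\mbf I^{\nu}\big(\mbf P^{g\cdot\mu}(g\cdot\mbf I^{\mu}(-))\big)\Big),
\qquad
\Phi_{(g'\cdot g,\mu)}(-) = \mbf P^{(g'g)\cdot\mu}\big((g'g)\cdot\mbf I^{\mu}(-)\big).
\]
First I would observe that the ``interior'' composite $\mbf I^{\nu}\circ\mbf P^{g\cdot\mu}$ appearing in the first expression is exactly the twisting, by the Maurer--Cartan element $g\cdot\mbf I(\mu) \in \tMC(L_\Omega,U_\Omega)$, of $\mbf I\circ\mbf P$; and by the retract data of definition of an $\cL_\infty^\Omega$-algebra, $\mbf I\circ\mbf P$ is $L_\infty$-homotopic to $\Id_{L_\Omega}$ via $\bs\eta_\Omega$. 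Twisting a homotopy by a Maurer--Cartan element yields a homotopy of the twisted morphisms (this is the twisting compatibility used implicitly throughout the paper — cf. the lemma on $F_\ast$ preserving $\tMC(L_\beta\wh\otimes\Omega_{\Delta^1})$), so $\mbf I^{\nu}\circ\mbf P^{g\cdot\mu} \sim \Id$ as morphisms of the twisted dgla. Pushing this homotopy forward along the strict automorphism $g'\cdot(-)$ and then along $\mbf P^{g'\cdot\nu}$ and precomposing with $g\cdot\mbf I^{\mu}$ (both of which preserve $L_\infty$-homotopies, again by the $F_\ast$-lemma applied to honest $L_\infty$-morphisms), we get
\[
\Phi_{(g',\nu)}\circ\Phi_{(g,\mu)} \;\sim\; \mbf P^{g'\cdot\nu}\big(g'\cdot(g\cdot\mbf I^{\mu}(-))\big).
\]

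Next I would use the strictness of the $\gG$-action on $L_\Omega$: $g'\cdot(g\cdot x) = (g'g)\cdot x$ for all $x$, so the right-hand side above equals $\mbf P^{g'\cdot\nu}\big((g'g)\cdot\mbf I^{\mu}(-)\big)$. It remains to identify this with $\Phi_{(g'g,\mu)} = \mbf P^{(g'g)\cdot\mu}\big((g'g)\cdot\mbf I^{\mu}(-)\big)$. Here the subtlety is the basepoint of $\mbf P$: on the left we project via $\mbf P$ twisted at $g'\cdot\nu = g'\cdot(g\cdot\mbf I(\mu)) = (g'g)\cdot\mbf I(\mu)$, while on the right we twist $\mbf P$ at $(g'g)\cdot\mu = \mbf P((g'g)\cdot\mbf I(\mu))$. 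But $\mbf P$ sends the Maurer--Cartan element $(g'g)\cdot\mbf I(\mu)$ to $(g'g)\star\mu$, and $\mbf P^{b}$ for $b\in\tMC(L_\Omega,U_\Omega)$ is by construction a morphism $L_\Omega^{b}\to L^{\mbf P(b)}$; so both sides are in fact the \emph{same} twisted morphism $\mbf P^{(g'g)\cdot\mbf I(\mu)}$, and equality holds on the nose (not merely up to homotopy), since $(g'g)\cdot\mbf I^{\mu} = (g'g)\cdot\mbf I$ twisted appropriately and the source twisting point agrees.

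The main obstacle, and the step I would spend the most care on, is the bookkeeping of \emph{twisting points}: each superscript in $\mbf P^{(-)}$, $\mbf I^{(-)}$ refers to a Maurer--Cartan element of the appropriate (dgla or $L_\infty$) algebra, and one must check that the chain of identities $g'\cdot(g\cdot\mbf I(\mu)) = (g'g)\cdot\mbf I(\mu)$ and $\mbf P(g\cdot\mbf I(\mu)) = g\star\mu$ propagates correctly so that every composite in sight is defined over matching convergence domains $U_\mu$, $U_\nu$, $U_{g'g\star\mu}$ (this is where the ``sufficiently small neighbourhoods'' and the composability hypothesis enter). Once the twisting-point dictionary is set up cleanly, the homotopy is assembled by transporting $\bs\eta_\Omega$ (twisted at $g\cdot\mbf I(\mu)$) through the strict automorphism $g'$ and the quasi-isomorphisms $\mbf I$, $\mbf P$; I would package this as: the homotopy realizing $\Phi_{(g',g\star\mu)}\circ\Phi_{(g,\mu)} \sim \Phi_{(g'g,\mu)}$ is
\[
\mbf H \;:=\; \mbf P^{g'\cdot\nu}\circ\big(g'\cdot(\bs\eta_\Omega)^{g\cdot\mbf I(\mu)}\big)\circ\big(g\cdot\mbf I^{\mu}\big),
\]
checking $\mbf H|_{t=0}$ and $\mbf H|_{t=1}$ give the two composites using $\bs\eta_\Omega|_{t=0}=\Id$, $\bs\eta_\Omega|_{t=1}=\mbf I\circ\mbf P$, and $\mbf P\circ\mbf I = \Id_L$, together with the strict associativity $g'\cdot(g\cdot(-)) = (g'g)\cdot(-)$ at the $t=1$ end.
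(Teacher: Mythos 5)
The paper states this lemma without proof, so there is no official argument to compare against; judged on its own terms, your strategy --- unwind the definition \eqref{homotopygauge}, recognize the interior composite $\mbf{I}^{\nu}\circ\mbf{P}^{g\cdot\mbf{I}(\mu)}$ as the twist $(\mbf{I}\circ\mbf{P})^{g\cdot\mbf{I}(\mu)}$, contract it to the identity using the twisted retract homotopy $\bs{\eta}_{\Omega}$, and then invoke strict associativity $g'\cdot(g\cdot x)=(g'g)\cdot x$ of the $\gG$-action on the dgla model --- is the natural one and is essentially correct.

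Two pieces of bookkeeping still need repair before the argument closes. First, $g'\star(g\star\mu)=\mbf{P}\bigl(g'\cdot\mbf{I}(\mbf{P}(g\cdot\mbf{I}(\mu)))\bigr)$ and $(g'g)\star\mu=\mbf{P}\bigl((g'g)\cdot\mbf{I}(\mu)\bigr)$ differ by an insertion of $\mbf{I}\circ\mbf{P}$ in the middle, so they are only gauge-equivalent, not equal. Consequently the two morphisms being compared do not share a target basepoint, and ``homotopy'' must be understood as one covering a gauge path joining $(g'g)\star\mu$ to $g'\star(g\star\mu)$; this also undercuts your closing claim that the two sides agree ``on the nose'' after strictifying the group action --- they agree only up to this change of basepoint. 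Second, and relatedly, the closed formula $\mbf{H}=\mbf{P}^{g'\cdot\nu}\circ\bigl(g'\cdot(\bs{\eta}_{\Omega})^{g\cdot\mbf{I}(\mu)}\bigr)\circ\bigl(g\cdot\mbf{I}^{\mu}\bigr)$ does not typecheck as written: the target twist point of $(\bs{\eta}_{\Omega})^{g\cdot\mbf{I}(\mu)}|_{t}$ varies with $t$ (it is $g\cdot\mbf{I}(\mu)$ at $t=0$ and $\mbf{I}_{0}(\nu)$ at $t=1$), so postcomposing with $\mbf{P}$ twisted at the single point $g'\cdot\mbf{I}(\nu)$ produces the desired endpoint only at $t=1$, not at $t=0$, where one needs $\mbf{P}^{(g'g)\cdot\mbf{I}(\mu)}$. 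The fix is to push the entire $\Omega_{\Delta^{1}}$-valued homotopy forward by the extended morphism --- i.e.\ use $\mbf{P}_{*}$ in the sense of the lemma on $F_{*}$ preserving $\tMC(L\wh{\otimes}\Omega_{\Delta^{1}})$, twisted along the path of Maurer--Cartan elements $t\mapsto g'\cdot\gamma(t)$ --- rather than $\mbf{P}$ twisted at a fixed point. With these two adjustments the proof is complete.
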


	\begin{defn}
	We define an $\cL_{\infty}^{\Omega}$-algebra to be the data of a dgla-modelled local $L_{\infty}$ algebra $L \mdl L_{\Omega}$, and a discrete extension $\gG$ of the infinitesimal gauge group $\gG_{0} = \exp(L_{\Omega}^{0})$. We write $\pi_{0}\gG = \gG/\gG_{0}$.
	\end{defn}
	
	If $L \rightsquigarrow L_{\Omega}$ is a dgla-modelled $\cL_{\infty}$-algebra, then one has that 
	$[\tMC(L,U)/\g] \cong [\tMC(L_{\Omega}, U_{\Omega} )/ \gG_{0}]$. In particular, if $[\tMC(L_{\Omega},U_{\Omega} )/ \gG_{0}]$ is representable by an analytic space, then so is $[\tMC(L,U)/\g]$. Furthermore, we also obtain a (strict) $\pi_{0} \gG$ action on $[\tMC(L,U)/\g]$, and one has that 
	
	\[
	\big[ [\tMC(L,U)/\g] / \pi_{0} \gG \big] \cong \big[ [\tMC(L_{\Omega}, U_{\Omega} )/ \gG_{0}]/\pi_{0} \gG \big]  \cong [\tMC(L_{\Omega}, U_{\Omega} )/\gG] =  [ \tMC(L_{\Omega}) /\gG ]_{ \resto_{U_{\Omega} } }
	\]

	\begin{defn} \label{extendedgaugequotient} 
	If $(L \mdl L_{\Omega}, U)$ is an $\cL^{\Omega}_{\infty}$-algebra, then we define the extended gauge group quotient to be 
	
	\[
	\tMC_{\gG}(L,U) = \big[ [\tMC(L,U)/\g] / \pi_{0} \gG \big]  \cong  [ \tMC(L_{\Omega}, U_{\Omega}  ) /\gG  ]
	\]
	\paragraph{{\bfseries Convention:}}
	
	For simplicity, we will always demand that our extended gauge group quotients are representable by analytic spaces. We may also write $\tMC_{\gG}(L,U) = \tMC(L,U) \git \gG $. \\
	We say that $\mu \sim_{\gG} \mu$ is an \emph{extended} gauge equivalence or a $\gG$-equivalence, if there exists a $g \in \gG$ such that $g \star \mu  = \mu'$. Note that if $\gG = \gG_{0}$, then $\tMC_{\gG}(L,U) = \tMC_{\g}(L,U)$. In this case, the $\gG$-equivalence relation agrees with the (infinitesimal) gauge equivalence relation. 
	\end{defn}

	Note that the definition of $\Phi_{(g,\mu)}$ (\ref{homotopygauge}) only defines a morphism between $\cL_{\infty}$-algebras if we choose appropriate convergence domains for $L_{\mu}$ and $L_{g \star \mu}$ respectively. Given a homotopy minimal atlas $\Phi$ for an $\cL_{\infty}$-algebra $(L,U)$, we can make choices of convergence domains subordinate to such an atlas.  That is, given a homotopy minimal atlas $\Phi$ for the $\cL_{\infty}$-algebra $(L,U)$, we can always shrink $U_{\mu}$ if necessary so that $\Phi_{(g,\mu)}: (L_{\mu}, U_{\mu})  \rightarrow  (L_{g \star \mu}, U_{g \star \mu} )$ is a well-defined morphism of $\cL_{\infty}$-algebras.

	\subsection{Gauge-fixed $\cL_{\infty}^{\Omega}$-algebras}
	\begin{notation} \label{ordering} Suppose that $(L \mdl L_{\Omega}, U, \Phi )$ is an $\cL_{\infty}^{\Omega}$-algebra equipped with a homotopy minimal atlas $\Phi$. 
	
	\begin{enumerate}
		\item{For $\mu, \ups \in \tMC(L,U)$, we write $\ups \preceq_{\Phi} \mu$ if there exists a $\ups_{\mu} \in \tMC(L,U)$ such that $\ups_{\mu}$ is contained in the image of $\tau^{0}I_{\mu}: \tMC(H_{\mu}, U_{\mu/\g}^{\min}) \hookrightarrow \tMC(L,U)$ and there exist a $\gG$-equivalence $\ups_{\mu} \sim_{\gG} \ups$. }
		\item{If in the above, $\ups \preceq_{\Phi} \mu$ are such that $g \cdot \ups = \ups_{\mu}$ with $g \in \gG_{0}$, then we write $\ups \sim_{\gG_{0}} \mu$ and $\ups \preceq_{\Phi_{0}}  \mu$. }
		\item{We write $\gG|_{U} := \{ g \in \gG \mid g \star \mu \in U \text{, for some } \mu \in \tMC(L,U) \}$. Furthermore, we define the notation: $\pi_{0}(\gG|_{U} ) = \gG|_{U} \Big/ \gG_{0} |_{U}$ }
		
		\item{We write $\gG^{\tMC}_{U} = \{ (\mu', g, \mu) \in \tMC(L,U) \times \gG \times \tMC(L,U) \mid g \star \mu = \mu' \}$, which can be identified with the restriction action groupoid to $\tMC(L_{\Omega},V)$.}
	\end{enumerate}
	
	\end{notation}

	If $\ups \preceq_{\Phi} \mu$, then there exists a $g \in \gG$ such that $g \star \ups = \ups_{\mu}$. Then, we obtain an $L_{\infty}$ isomorphism 
	
	\[
	\Phi_{(g,\mu)}^{\min} : \bH_{\ups}( U^{\min}_{\ups/\g}) \resto_{U_{\ups,\ups_{\mu}/\g}^{\min}  }  \longiso \bH_{\ups_{\mu}}( U_{\ups_{\mu}/\g}^{\min}  ) \resto_{U^{\min}_{\ups_{\mu},\ups/\g} } 
	\]
	For $U_{\ups,\ups_{\mu}/\g}^{\min} \subset U_{\ups/\g}^{\min}$ and $U_{\ups_{\mu},\ups/\g}^{\min} \subset U_{\ups_{\mu}/\g}^{\min}$ sufficiently small polydiscs around the origin. \\
	
	Furthermore,  where composable, the homotopy relation $\Phi_{(g', g \star \mu)} \circ \Phi_{(g,\mu)} \sim \Phi_{(g' \cdot g, \mu) }$ strictifies to equality upon passing to minimal models:
	
	\[
	\Phi_{(g', g \star \mu) }^{\min} \circ \Phi_{(g , \mu) }^{\min}  = \Phi_{(g' \cdot g, \mu) }^{\min}
	\]

	If there is no confusion (for example, if there is a unique $g \in \gG$ such that $g  \star \ups = \ups_{\mu}$), then we may write $\Phi_{\ups,\ups_{\mu}} = \Phi_{(g,\mu)}$ and $\Phi_{\ups,\ups_{\mu}}^{\min} = \Phi_{(g, \mu)}^{\min}$.

	Suppose that $\ups, \mu \in \tMC(L,U)$, such that $\ups \preceq_{\Phi} \mu$, then by definition we have an $L_{\infty}$ homotopy equivalence $\Phi_{(g, \ups)}: L_{\ups} \rightarrow L_{\ups_{\mu}}$, with $g \star \mu = \ups_{\mu} \in \tMC(L_{\mu}, U_{\mu})$, for some $g \in \gG$. Furthermore, one has the following commutative diagram:
	
	\begin{equation} \label{gaugefixing}
	\begin{tikzcd} [sep = huge]
		(L_{\ups}, U_{\ups,\ups_{\mu}}) \arrow[r, "\Phi_{(g,\ups)}"] & (L_{\ups_{\mu}}, U_{\ups_{\mu},\ups})  \arrow[d, "P_{\ups_{\mu}}"]   \\
		(H_{\ups},U_{\ups,\ups_{\mu}/\g}^{\min}) \arrow[u, "I_{\ups}", hookrightarrow] \arrow[r, "\Phi_{(g, \ups)}^{\min}" , "\sim" swap] &  (H_{\ups_{\mu}}, U_{\ups_{\mu},\ups/\g}^{\min}) 
	\end{tikzcd}
	\end{equation}

	Therefore, we may also define the following injective quasiisomorphisms:
	
	\begin{enumerate}
	\item{$\Phi_{\ups_{\mu}} : H_{\ups} \hookrightarrow L_{\ups_{\mu}}$, by $\Phi_{\ups_{\mu}} := I_{\ups_{\mu}} \circ \Phi_{(g, \ups)}^{\min}$. }
	\item{$I_{\ups/\mu}:  H_{\ups} \hookrightarrow H_{\mu}^{\ups_{\mu}}$, by $I_{\ups/\mu} := I_{\ups_{\mu}/\mu} \circ \Phi_{(g, \ups)}^{\min}$. }
	\end{enumerate}

	\begin{defn}
	For a $\cL_{\infty}^{\Omega}$-algebra $(L \mdl L_{\Omega}, U)$ with a homotopy minimal atlas $\Phi$, we write $\Phi = \Phi_{0}$ and define $\Phi = \Phi_{0} \cup \{ \Phi_{(g,\mu)}  \mid (g , \mu) \in \gG \}$ (such that $\Phi_{(g,\mu)}$ induces a commutative diagram \ref{gaugefixing}), and say that $(L \mdl L_{\Omega}, U, \Phi)$ defines a \emph{gauge-fixed} $\cL_{\infty}^{\Omega}$-algebra. We say that $\Phi$ is a $\gG$-adapted homotopy minimal atlas. \\

	\end{defn}

	The $\gG$-adapted homotopy minimal atlas $\Phi$ has the following compatibility property. Suppose that $\ups, \ups' \in X_{(e)}$ and $\mu, \nu \in X_{(d)}$ with $e < d$ and $\ups, \ups' \preceq_{\Phi} \mu, \nu$, then the following diagram commutes:
	\begin{equation} \label{compatibility}
	\begin{tikzcd} [sep = huge ]
		\bH_{\ups_{\mu}}(U_{\ups_{\mu}/\g}^{\min}) \arrow[r, "I_{\ups_{\mu}}"]  & \bL_{\ups_{\mu}}(U_{\ups_{\mu} })    \arrow[r, "\Phi_{\ups_{\mu},\ups'_{\nu}} \, \, = \, \,  \Phi_{\mu,\nu}^{\ups, \ups'}","\sim" {anchor=north}] &  \bL_{\ups'_{\nu}}(U_{\ups'_{\nu}}) \arrow[d,"\Phi_{g'{}^{-1}}" swap] \arrow[ddr, "\wtil{\Phi}_{\ups'_{\nu}}"]  \arrow[r, "P_{\ups'_{\nu} }" ]    &  \bH_{\ups'_{\nu} }( U_{\ups'_{\nu} /\g}^{\min}) \arrow[dd, "\Phi_{g'{}^{-1}}^{\min}",, "\sim" {anchor=south, rotate = 90}] \\
		& \bL_{\ups}(U_{\ups} )  \arrow[u,"\Phi_{g}" swap] \arrow[r, " \Phi_{\ups,\ups'}", "\sim" {anchor=north}]  & \bL_{\ups'}(U_{\ups'})   \arrow[dr, "P_{\ups'}" swap]  &  \\ 
		\bH_{\ups}(U_{\ups/\g}^{\min} ) \arrow[uu, "\Phi_{g}^{\min}", "\sim" {anchor=north, rotate=90}] \arrow[ur, "I_{\ups}" swap, hookrightarrow] \arrow[uur, "\Phi_{\ups_{\mu}}", hookrightarrow]  \arrow[rrr, "\Phi^{\min}_{\ups,\ups'}", "\sim" {anchor=north}]   & & &	\bH_{\ups'}(U_{\ups'/\g}^{\min} )  
	\end{tikzcd}
	\end{equation} 
	
	That is, we have that 
	
	\begin{equation}
	\Phi_{\ups,\ups'}^{\min} = \Phi                                                                                                                                                                                                                                                                                                                                                                                                                                                                                                                                                                                                                                                                                                                                                                                                                                                                                                                                                                                                                                                                                                                                                                                                                                                                                                                                                                                                                                                                                                                                                                                                                                                                                                                                                                                                                                                                                                                                                                                                                                                                                                                                                                                                                                                                                                                                                                                                                                                                                                                                                                                                                                                                                                                                                                                   _{\ups'_{\nu}} \circ \Phi_{\ups_{\mu}, \ups_{\nu}'} \circ \Phi_{\ups_{\mu}}   : \bH_{\ups}(U_{\ups/\g}^{\min}) \longiso \bH_{\ups'}(U_{\ups'/\g}^{\min})   
	\end{equation}
	
	At the very least, it is easy to see that $\Phi_{\ups,\ups'}^{\min}$ and $\Phi                                                                                                                                                                                                                                                                                                                                                                                                                                                                                                                                                                                                                                                                                                                                                                                                                                                                                                                                                                                                                                                                                                                                                                                                                                                                                                                                                                                                                                                                                                                                                                                                                                                                                                                                                                                                                                                                                                                                                                                                                                                                                                                                                                                                                                                                                                                                                                                                                                                                                                                                                                                                                                                                                                                                                                                  _{\ups'_{\nu}} \circ \Phi_{\ups_{\mu}, \ups_{\nu}'} \circ \Phi_{\ups_{\mu}}$ are $L_{\infty}$-homotopic morphisms. On the other hand, if $F, G : H \rightarrow H'$ are homotopic morphisms between \emph{minimal} $L_{\infty}$-algebras $H$ and $H'$, with $H^{0} = 0 = (H')^{0}$ then they must be equal.

	\section{Homotopy BV geometry} 
	
	In this section, we will introduce the notion of a $(-1)$-shifted symplectic structure for an $\cL_{\infty}$-algebra. We will see in the forthcoming sequel to this paper, $(-1)$-shifted symplectic structures and homotopy BV data are among the many things one can obtain for an $\cL_{\infty}^{\Omega}$-algebra from its dgla model.
	In this section, we will investigate some of the geometry that arises out of a $(-1)$-shifted symplectic structure on an $\cL_{\infty}$-algebra. In \cite{joyce2013classicalmodelderivedcritical}, Joyce shows that derived spaces with $(-1)$-shifted symplectic structures have classical loci which are locally presentable as critical loci.

	\subsection*{Kahler differentials on $\cL_{\infty}$-algebras}
	
	Suppose that $\bL(U)$ is an $\cL_{\infty}$-algebra, concentrated in non-negative degrees. In particular, there is a cdga of functions on $\bL(U)$ given by:

	\[
	\O^{\bullet}_{(L,U)} := (\wh{\Sym}_{\O_{U}}( \bL^{\bullet }[-1]^{\vee} ), q)
	\]
	
	Thus, we can consider the complex of Kahler differentials on $\O^{\bullet}_{(L,U)}$, given by:
	
	\[
	\Omega^{1}_{(L,U)} := \O^{\bullet}_{(L,U)} \wh{\otimes} (L^{\bullet})^{\vee}[1]  
	\]
	
	if $f \otimes x[1] \in \Omega^{1}_{(L,U)}$, we will suggestively write $f dx  := f \otimes x[1]$. Then, as usual, there is a canonical map (the exterior derivative)
	
	\[
	d_{dR}: \O^{\bullet}_{(L,U)}\rightarrow  \Omega^{1}_{(L,U)}
	\]
	defined by considering the shift map (of chain complexes) $\bL^{\vee} \rightarrow \bL^{\vee}[1]$, and then continuously extending to $\O^{\bullet}_{(L,U)}$ uniquely to act as a derivation.  By definition, we have that $d_{dR}$ and $q$ commute, so that $[d_{dR},q] = 0$. \\
	
	Once we have defined Kahler $1$-forms for $\bL(U)$, we can then define $p$-forms. They are defined as:
	\[
	\Omega^{p}_{(L,U)} := \bigwedge_{\O^{\bullet}_{(L,U)}}^{p} \Omega^{1}_{(L,U)}
	\]
	Note that $\Omega^{0}_{(L,U)} = \O^{\bullet}_{(L,U)}$. 
	
	\begin{defn}
		The deRham complex of $\bL(U)$ is defined to be the complex defined by:\\ 
		\[
		\Omega^{\bullet}_{(L,U)} := ( \bigoplus\limits_{p \geq 0} \Omega^{p}_{(L,U) } , d_{dR} + q )
		\]
	\end{defn}

	Let us now consider $2$-forms on $\bL(U)$. By definition, a $2$-form is given by an element $ \omega \in \bigwedge^{2} \Omega^{1}_{\bL(U)} = \O^{\bullet}_{\bL(U)} \otimes \big( d_{dR} L^{\vee} \bigwedge  d_{dR} L^{\vee} \big)$.
	Therefore, for any $x \in U$ we obtain a pairing $\omega_{x}: T_{x} \bL(U) \wedge T_{x} \bL(U) \rightarrow \CC$, and that these pairings are analytic with respect to $x \in U$.  Furthermore, note that if $\omega$ is $q$-closed, then we get that 
	\[
	\omega_{x} ( d_{x}u , v ) =  (-1)^{|u||x|}\omega_{x} ( u, d_{x}v) 
	\]
	for all $u,v \in T_{x} \bL(U)$. Therefore, $\omega$ defines a morphism of chain complexes  $T_{\mu} \bL(U) \rightarrow T_{\mu} \bL(U)^{\vee}$ for all $\mu \in \tMC( L,U )$.
	\\

	\begin{defn}
		We say that an $\cL_{\infty}$-algebra $\bL(U)$ is $(-1)$-shifted symplectic, if there exists a $d_{dR} +q$ closed element $\omega \in \bigwedge^{2} \Omega^{1}_{\bL(U)}$ of homological degree $-1$ such that 
		for each $\mu \in \tMC(\bL(U) )$ the induced morphism of chain complexes $\omega_{\mu}: L_{\mu} \rightarrow L_{\mu}^{\vee}[-1]$ is a quasi-isomorphism. Recall that $L_{\mu} = T_{\mu} \bL(U)$.\\
	\end{defn}
	
	\subsection{BV $\cL_{\infty}$-algebras}
	
	Suppose that a $(-1)$-shifted symplectic form $\omega$ for an $\cL_{\infty}$-algebra $\bL(U)$ is exact in the deRham complex $\Omega^{\bullet}_{(L,U)}$. Then, this would mean that there is a total degree $0$ element $\theta \in  \Omega^{\bullet}_{(L,U)}$ such that $(d_{dR} + q)\theta =\omega$. As $\O_{(L,U)}^{\bullet}$ is concentrated in $\leq 0$ degrees, such a $\theta$ must be given by a pair $(S, \sigma)$ where $S \in \O^{0}_{(L,U)}$ is a degree $0$ function and $\sigma \in (\Omega^{1}_{(L,U)})^{-1}$ is a degree $-1$ $1$-form. The equation $(d_{dR} + q) \theta =\omega $ can then be written as:
	\begin{align} \label{BVeqns}
		& d_{dR} \sigma = \omega \\
		& \nonumber q \sigma = d_{dR} S  \\
		& \nonumber qS = 0 
	\end{align}
	In general, the equation $q\sigma = d_{dR} S$ implies that $d_{dR}  S$ vanishes on the classical locus of $\bL(U)$. 
	Note that if $L$ is in strictly positive degrees, then $qS = 0$ is trivially true for degree reasons.

	\subsubsection*{Bundle maps and $1$-forms }
	
	Recall that we assume that our $\cL_{\infty}$-algebras are quasi-smooth. \\

	One can identify a morphism of bundles 
	
	\[
	\sigma: \bL^{2} \rightarrow (\bL^{1})^{\vee} = \Omega^{1}_{U}
	\]
	
	with a degree $-1$ $1$-form $\sigma \in \Omega^{1}_{U} \otimes (L)^{\vee}  \hookrightarrow \Omega^{1,-1}_{(L,U)}$.  Conversely, any degree $-1$ $1$-form contained in $\Omega^{1}_{U} \otimes (L^{2})^{\vee}$ can be written as a  morphism of bundles $\bL^{2}_{U} \rightarrow \Omega^{1}_{U}$.

	\begin{rmk}
		
	Suppose that $\bL(U)$ is a strictly quasismooth $\cL_{\infty}$-algebra -- i.e., where $L$ is in degrees $[1,2]$.
	  
	Then, we have splittings of the deRham complex of $\bL(U)$, and the space of $2$-forms of degree $-1$ is given by

	\[
	\Omega_{(L,U)}^{2, -1}  = \Big( d_{dR}(L^{2})^{\vee} \otimes \Omega_{U}  \Big)[1] \, \,  \oplus \, \, \Big( (L^{2})^{\vee} \otimes \Omega^{2}_{U} \Big)[1] 
	\]
	
	and the space of $1$-forms of degree $-1$ is given by 
	
	\[
	\Omega_{(L,U)}^{1, -1}  = \Big( d_{dR} (L^{2})^{\vee}  \Big)[1]  \, \, \oplus \, \, \Big( (L^{2})^{\vee}  \otimes \Omega^{1}_{U} \Big)[1]
	\]

	Furthermore, the deRham differential splits into components, so that so that $d_{dR} = \sD + \iota_{d} + D$ in the below diagram:
	
	\[
	\begin{tikzcd} [row sep= 8em]
		d(L^{2})^{\vee} \otimes \Omega_{U}^{1}    & \oplus & (L^{2})^{\vee} \otimes \Omega^{2}_{U} \\ 
		d_{dR}(L^{2})^{\vee}	\arrow[u, "\mbf{s}D"]			&    \oplus 		&   (L^{2})^{\vee}  \otimes \Omega^{1}_{U} \arrow[u, "D"] \arrow[ull, "\iota_{d}" swap] 
	\end{tikzcd}
	\]
	The diagonal map $\iota_{d}: (L^{2})^{\vee} \otimes \Omega^{1}_{U} \rightarrow d_{dR}(L^{2})^{\vee} \otimes \Omega_{U}^{1}$ is defined by $x \otimes \alpha \mapsto x[1] \otimes \alpha = d_{dR} x \otimes \alpha$. Clearly $\iota_{d}$ is an isomorphism, acts as a degree shift for functions in degree $-1$. In particular, for any $\omega \in d_{dR}(L^{2})^{\vee} \otimes \Omega_{U}^{1}$, there exists a $\wtil{\sigma} \in (L^{2})^{\vee} \otimes \Omega_{U}^{1}$ such that $\iota_{d}( \wtil{\sigma} ) = \omega$. Note that we do not necessarily have that $d_{dR} \wtil{\sigma} = \omega$, unless $D \wtil{\sigma} = 0$. \\
	\end{rmk}

	In light of \ref{BVeqns}, if a quasi-smooth $(-1)$-shifted symplectic $\cL_{\infty}$-algebra $(\bL(U), \omega)$ had a choice of potential $(\sigma, S)$ so that $(d_{\dR} + q )( S + \sigma) = \omega$ with $\sigma \in (L^{2})^{\vee} \otimes \Omega_{U}^{1}$, then we have a commutative diagram:
	
	\[
	\begin{tikzcd}
		\bL^{2}_{U} \arrow[r, "\sigma"] & \Omega_{U} \\
		U \arrow[u, "\lambda"] \arrow[ur, "d_{dR}S", swap] 
	\end{tikzcd}
	\text{ \big( where } \Omega_{U} = (\bL^{1}_{U})^{\vee} \text{ \big)}
	\]
	
	Furthermore, the third equation in \ref{BVeqns} tells us that $S$ is invariant under the infinitesimal gauge action of $\g = \bL^{0}_{U}$ on $U$. The (homological) non-degeneracy of $d_{\dR} \sigma = \omega$ implies that the induced morphism (extended by $0$ in the other degrees)

	\[
	\sigma_{\mu}: L_{\mu}^{\bullet} \rightarrow (L_{\mu}^{\bullet})^{\vee}[-1] 
	\]
	
	is a quasi-isomorphism. In other words,\\

	\begin{equation} \label{inducedHCLdiagram2} 
		\begin{tikzcd}
			\cdots L^{0}_{\mu} \arrow[r,"d_{\mu}"] \arrow[d, "0"] & L^{1}_{\mu} \arrow[d, "\sigma_{\mu}^{\vee}"] \arrow[r, "d_{\mu}"] &  L^{2}_{\mu} \arrow[r, "d_{\mu}"] \arrow[d, "\sigma_{\mu}"] &  L^{3}_{\mu}  \arrow[d, "0"] \cdots  \\
			\cdots (L^{3}_{\mu})^{\vee} \arrow[r,"d^{\vee}_{\mu}"] & (L^{2}_{\mu})^{\vee} \arrow[r, "d^{\vee}_{\mu}"] &  (L^{1}_{\mu})^{\vee} \arrow[r, "d_{\mu}^{\vee}"] &  (L^{0}_{\mu})^{\vee} \cdots  \\
		\end{tikzcd}
	\end{equation}
	
	is a quasi-isomorphism of the rows.\\

	We encapsulate these properties by making the definition: 
	
	\begin{defn} (BV $\cL_{\infty}$-algebra)\\
		Suppose that $\bL(U)$ is a quasi-smooth $\cL_{\infty}$-algebra. We say that $(L,U, S ,\sigma)$ defines a BV $\cL_{\infty}$-algebra if: $S: U \rightarrow \CC$ is an analytic function, and $\sigma: \bL^{2}_{U} \rightarrow \Omega^{1}_{U}$ is a morphism of bundles such that

		\[
		\begin{tikzcd}
			\bL^{2}_{U} \arrow[r, "\sigma"] & \Omega_{U}^{1} \\
			U \arrow[u, "\lambda_{MC}"]  \arrow[ur, "d_{dR}S", swap] & \\
		\end{tikzcd}
		\]
		commutes, and  for all $\mu \in X = \tMC(L,U)$:
		
		\begin{enumerate}
			\item{$q_{\mu} \left( d_{dR}S \contract( - )  \right) = d_{dR}S \circ d_{\mu}(-) = 0 $, in other words $S$ is gauge invariant. }
			\item{ The induced commutative diagram: 
				
				\begin{equation} \label{inducedHCLdiagram2} 
					\begin{tikzcd}
						\cdots L^{0}_{\mu} \arrow[r,"d_{\mu}"] \arrow[d, "0"] & L^{1}_{\mu} \arrow[d, "\sigma_{\mu}^{\vee}"] \arrow[r, "d_{\mu}"] &  L^{2}_{\mu} \arrow[r, "d_{\mu}"] \arrow[d, "\sigma_{\mu}"] &  L^{3}_{\mu}  \arrow[d, "0"] \cdots  \\
						\cdots (L^{3}_{\mu})^{\vee} \arrow[r,"d^{\vee}_{\mu}"] & (L^{2}_{\mu})^{\vee} \arrow[r, "d^{\vee}_{\mu}"] &  (L^{1}_{\mu})^{\vee} \arrow[r, "d_{\mu}^{\vee}"] &  (L^{0}_{\mu})^{\vee} \cdots  \\
					\end{tikzcd}
				\end{equation}
				
				is a quasi-isomorphism of the rows. }
		\end{enumerate}
	\end{defn}

	Now, an important property is that homotopy BV data pulls back along quasi-isomorphisms between $\cL_{\infty}$-algebras. That is,
	
	\begin{defn}
		Suppose that $\big( L,U , S , \sigma \big)$ is a BV $\cL_{\infty}$-algebra. Furthermore, suppose that $\wtil{\bL}(V)$ is an $\cL_{\infty}$-algebra with a quasi-isomorphism $\Phi: \wtil{\bL}(V) \rightarrow \bL(U)$.\\
		
		Then, we define $\Phi^{*}\sigma$ via the diagram:
		
		\[
		\begin{tikzcd}
			\bL^{2}_{U} \arrow[r,"\sigma"] & (\bL^{1}_{U})^{\vee} \\
			\wtil{\bL}_{ V }^{2} \arrow[u," \phi^{\#}"] \arrow[r, "\Phi^{*}\sigma" swap, dotted] & (\wtil{\bL}_{V}^{1})^{\vee} \arrow[u, "\phi^{*}" swap] 
		\end{tikzcd}
		\]
	\end{defn}
	
	\begin{lem}
		Suppose that $\big( L,U , S , \sigma \big)$ is a BV $\cL_{\infty}$-algebra. Furthermore, suppose that $\wtil{\bL}(V)$ is an $\cL_{\infty}$-algebra with a quasi-isomorphism $\Phi: \wtil{\bL}(V) \rightarrow \bL(U)$. Then, the pair $\Phi^{*}(S,\sigma) = ( \phi^{*}S , \phi^{*} \sigma) $ defines homotopy BV data for the $\cL_{\infty}$-algebra $\wtil{\bL}(V)$.
	\end{lem}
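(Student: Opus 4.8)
### Proof proposal

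\medskip

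\noindent\textbf{The plan.} The statement asserts that if $(L,U,S,\sigma)$ is a BV $\cL_\infty$-algebra and $\Phi \colon \wtil{\bL}(V) \to \bL(U)$ is a quasi-isomorphism of $\cL_\infty$-algebras, then $\Phi^*(S,\sigma) = (\phi^*S, \phi^*\sigma)$ satisfies the two axioms in the definition of a BV $\cL_\infty$-algebra for $\wtil{\bL}(V)$: namely (a) the commuting triangle relating $\Phi^*\sigma$, the Maurer-Cartan section $\wtil\lambda_{MC}$, and $d_{dR}\phi^*S$; together with the fact that $\phi^*S$ is gauge invariant; and (b) that for each $\wtil\mu \in \tMC(\wtil L, V)$ the induced map $(\Phi^*\sigma)_{\wtil\mu} \colon \wtil L_{\wtil\mu} \to \wtil L_{\wtil\mu}^\vee[-1]$ is a quasi-isomorphism of the rows. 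My plan is to verify each piece by naturality, using that $\Phi$ is a morphism of $\cL_\infty$-algebras (so $\phi^\#$ is compatible with the twisted differentials and with the Maurer-Cartan sections) and that $\Phi$ is a quasi-isomorphism (so the tangent maps $D_{\wtil\mu}\phi$ are quasi-isomorphisms of chain complexes, and $\tau^0\Phi$ identifies Maurer-Cartan loci as in \eqref{equalmc}).

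\medskip

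\noindent\textbf{Step 1: the commuting triangle and gauge invariance.} First I would observe that $d_{dR}$ is natural: pulling back $1$-forms along $\Phi$ intertwines $d_{dR}$ on $\O^\bullet_{(L,U)}$ with $d_{dR}$ on $\O^\bullet_{\wtil{(L,V)}}$, so $\phi^*(d_{dR}S) = d_{dR}(\phi^*S)$. Next, the defining diagram for $\Phi^*\sigma$ says precisely $\phi^* \circ \sigma = (\Phi^*\sigma) \circ \phi^\#$ as bundle maps $\wtil{\bL}^2_V \to \Omega^1_V$. Now precompose the original commuting triangle for $(L,U,S,\sigma)$ — i.e. $\sigma \circ \lambda_{MC} = d_{dR}S$ — with the map on base spaces and with $\phi^\#$; the compatibility of $\Phi$ with the Maurer-Cartan sections ($\phi^\# \circ \wtil\lambda_{MC} = \phi^*\lambda_{MC}$, which is the curvature-compatibility part of $\Phi$ being a morphism of $L_\infty[1]$-bundles) then yields $(\Phi^*\sigma) \circ \wtil\lambda_{MC} = \phi^*(\sigma \circ \lambda_{MC}) = \phi^*(d_{dR}S) = d_{dR}(\phi^*S)$, which is the required triangle. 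For gauge invariance of $\phi^*S$: axiom (1) for $S$ says $d_{dR}S \circ d_\mu = 0$ for all $\mu$; since $\phi^\#$ intertwines the twisted differentials $\wtil d_{\wtil\mu}$ and $d_{\phi(\wtil\mu)}$ (as $\Phi$ is a morphism of $L_\infty$-algebras), one gets $d_{dR}(\phi^*S) \circ \wtil d_{\wtil\mu} = \phi^*(d_{dR}S) \circ \wtil d_{\wtil\mu} = \phi^*\bigl(d_{dR}S \circ d_{\phi(\wtil\mu)}\bigr) = 0$ — here I would be a little careful that the linear part of $\Phi$ at $\wtil\mu$ is what implements the chain map, and that $\tau^0\Phi(\wtil\mu)$ is indeed a Maurer-Cartan element of $L$ so that axiom (1) for $S$ applies at that point.

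\medskip

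\noindent\textbf{Step 2: non-degeneracy, i.e.\ axiom (2).} This is the main point. Fix $\wtil\mu \in \tMC(\wtil L, V)$ and set $\mu := \tau^0\Phi(\wtil\mu) \in \tMC(L,U)$. By the construction of $\Phi^*\sigma$ and naturality of duals, the diagram \eqref{inducedHCLdiagram2} for $(\Phi^*\sigma)_{\wtil\mu}$ factors as the composite of three vertical maps of chain complexes: the tangent map $D_{\wtil\mu}\phi \colon \wtil L_{\wtil\mu} \to L_\mu$, then $\sigma_\mu \colon L_\mu \to L_\mu^\vee[-1]$, then the dual tangent map $(D_{\wtil\mu}\phi)^\vee \colon L_\mu^\vee[-1] \to \wtil L_{\wtil\mu}^\vee[-1]$. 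Since $\Phi$ is a quasi-isomorphism of $\cL_\infty$-algebras, $D_{\wtil\mu}\phi$ is a quasi-isomorphism of chain complexes; hence so is its dual $(D_{\wtil\mu}\phi)^\vee$ (over a field, dualization is exact and preserves quasi-isomorphisms); and $\sigma_\mu$ is a quasi-isomorphism by axiom (2) for the original data. The composite of three quasi-isomorphisms is a quasi-isomorphism, which is exactly axiom (2) for $\Phi^*(S,\sigma)$. The only genuine obstacle I anticipate is bookkeeping: checking that the factorization $(\Phi^*\sigma)_{\wtil\mu} = (D_{\wtil\mu}\phi)^\vee \circ \sigma_\mu \circ D_{\wtil\mu}\phi$ really does hold on the nose (signs, shifts, and the extension-by-zero in degrees $\neq 1,2$), and that the linear term $\phi^{(1)}$ evaluated/twisted at $\wtil\mu$ coincides with the tangent map $D_{\wtil\mu}\phi$ of the underlying dg-spaces — this is where I would spend most of the care, pulling the identifications from the "Differential graded tangent spaces" subsection and the definition of $\Phi^*\sigma$. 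Everything else is formal naturality.
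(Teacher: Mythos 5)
Your proposal is correct and takes essentially the same route as the paper: the paper's proof consists precisely of your Step 2, factoring $(\Phi^{*}\sigma)_{\wtil\mu}$ through the three vertical quasi-isomorphisms $D_{\wtil\mu}\phi$, $\sigma_{\phi(\wtil\mu)}$, and $(D_{\wtil\mu}\phi)^{\vee}$ and concluding that the composite is a quasi-isomorphism. Your Step 1 (the commuting triangle and gauge invariance of $\phi^{*}S$) is extra care the paper omits as formal naturality, but your verification of it is sound.
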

	
	\begin{proof}
		The point is to check that for any $\ups \in \tMC(\wtil{L},V)$, the diagram
		
		\[
		\begin{tikzcd}
			0 \arrow[r] & \wtil{L}_{\ups}^{1} \arrow[r, "d_{\ups}"] \arrow[d, "\phi^{*} \sigma^{\vee}_{\ups}" swap] &  \wtil{L}_{\ups}^{2}  \arrow[d, "\phi^{*} \sigma_{\ups}" ]  \arrow[r]  & 0  \\ 
			0 \arrow[r] & (\wtil{L}_{\ups}^{2} )^{\vee}          \arrow[r, "(d_{\ups})^{\vee}" , swap] 	&  (\wtil{L}_{\ups}^{1})^{\vee} \arrow[r] &  0
		\end{tikzcd}
		\]
		is a quasi-isomorphism of the rows. \\

		On the other hand, we have the diagram:
		
		\[
		\begin{tikzcd} [sep = huge]
			0 \arrow[r] & 	\wtil{L}_{\ups}^{1} \arrow[r, "d_{\ups}"] \arrow[ddd, bend right = 60, "\phi^{*} \sigma^{\vee}_{\ups}" swap] \arrow[d, "\phi^{1}_{\ups}" swap] &  \wtil{L}_{\ups}^{2}  \arrow[d, "\phi^{1}_{\ups}" ] \arrow[ddd, bend left = 60, "\phi^{*} \sigma_{\ups}" ]  \arrow[r] & 0  \\
			0 \arrow[r]	&	L_{\phi(\ups)}  \arrow[r, "d_{\phi(\ups) }" swap ]  \arrow[d, "\sigma^{\vee}_{\phi(\ups)}" swap ]& L^{2}_{\phi(\ups)} \arrow[d,"\sigma_{\phi(\ups)}"] \arrow[r] & 0  \\
			0 \arrow[r] & (L^{2}_{\phi(\ups)})^{\vee} \arrow[r, "d_{\phi(\ups)}^{\vee}"] \arrow[d,"(\phi_{\ups}^{1})^{\vee}"]  & (L^{1}_{\phi(\ups)})^{\vee}  \arrow[d, "(\phi_{\ups}^{1})^{\vee}" swap]  \arrow[r] & 0 \\
			0 \arrow[r] &	(\wtil{L}_{\ups}^{2})^{\vee} \arrow[r, " d_{\ups}^{\vee}" swap]  & 
			(\wtil{L}_{\ups}^{1})^{\vee}  \arrow[r] & 0
		\end{tikzcd}
		\]
		
		Then, as $\Phi$ is a quasi-isomorphism, and $(S, \sigma)$ is a BV pair for $\bL(U)$, the above diagram depicts quasi-isomorphisms between consecutive rows. Therefore, the two rows are also quasi-isomorphic via the map $(\phi^{*}\sigma^{\vee}_{\ups}, \phi^{*}\sigma_{\ups})$. 
	\end{proof}

	Although it may not be explicitly required for us to ever cite, the following proposition will be of interest.
	
	\begin{prop}
		Suppose that $(L, U, \omega)$ is a $(-1)$-shifted symplectic $\cL_{\infty}$-algebra. Then, there exists a $(d_{dR} + q)$-lift $(S, \sigma)$ of $\omega$ such that $(d_{dR} + q ) ( S + \sigma ) = \omega$ and $\sigma \in  (\bL_{U}^{2})^{\vee} \otimes \Omega^{1}_{U}$
	\end{prop}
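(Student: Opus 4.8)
The plan is to use the deRham-complex splitting of a strictly quasi-smooth $\cL_{\infty}$-algebra that was recorded in the remark preceding the statement, together with the Poincaré-lemma–type exactness of the deRham differential on a convex (polydisc) base. Since $(L,U)$ is quasi-smooth, $L$ sits in degrees $[1,2]$, so $\O^{\bullet}_{(L,U)}$ is concentrated in $\leq 0$ degrees and a total degree $0$ potential $\theta$ for $\omega$ must be a pair $(S,\sigma)$ with $S\in\O^{0}_{(L,U)}$ and $\sigma\in(\Omega^{1}_{(L,U)})^{-1}$; this is exactly the discussion around \ref{BVeqns}, which already shows that giving such a lift amounts to solving $d_{dR}\sigma=\omega$, $q\sigma=d_{dR}S$, $qS=0$. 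So the content of the proposition is: (i) $\omega$ is $d_{dR}$-exact as a $q$-cocycle, i.e. one can find $\sigma$ with $d_{dR}\sigma=\omega$ \emph{and} with $\sigma$ lying in the ``non-shift'' summand $(\bL^{2}_{U})^{\vee}\otimes\Omega^{1}_{U}$ rather than the $d_{dR}(\bL^{2}_{U})^{\vee}\otimes\Omega_{U}$ summand; and (ii) once $\sigma$ is chosen, the remaining equations $q\sigma=d_{dR}S$ and $qS=0$ can be solved for $S$.

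First I would decompose $\omega\in\Omega^{2,-1}_{(L,U)}$ along the splitting
$\Omega_{(L,U)}^{2, -1} = \big( d_{dR}(L^{2})^{\vee}\otimes\Omega_{U} \big)[1]\oplus\big((L^{2})^{\vee}\otimes\Omega^{2}_{U}\big)[1]$ displayed in the remark, say $\omega=\omega_{1}\oplus\omega_{2}$. The condition $(d_{dR}+q)\omega=0$ (in particular $d_{dR}\omega=0$) constrains these components. Using the isomorphism $\iota_{d}\colon(L^{2})^{\vee}\otimes\Omega^{1}_{U}\xrightarrow{\sim}d_{dR}(L^{2})^{\vee}\otimes\Omega^{1}_{U}$ from the remark, write $\omega_{1}=\iota_{d}(\wtil\sigma_{0})$ for a unique $\wtil\sigma_{0}\in(L^{2})^{\vee}\otimes\Omega^{1}_{U}$. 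The idea is that $\wtil\sigma_{0}$ is the candidate for $\sigma$ up to a correction: $\sD\wtil\sigma_{0}$ lands in $d(L^{2})^{\vee}\otimes\Omega^{1}_{U}$ and $D\wtil\sigma_{0}$ lands in $(L^{2})^{\vee}\otimes\Omega^{2}_{U}$, and the closedness of $\omega$ forces $D\wtil\sigma_{0}=\omega_{2}$ plus lower-order terms, i.e. $\omega_{2}$ is already determined as a ``$D$-boundary'' of $\wtil\sigma_{0}$. Concretely: since $\Omega^{\bullet}_{U}$ on a polydisc is acyclic for the fibrewise deRham differential $D$ (holomorphic Poincaré lemma on the convex base $U$), the $\Omega^{\geq1}_{U}$-components of a $d_{dR}$-closed form can be corrected away, and one reduces $\omega$ modulo $(d_{dR}+q)$-exact terms to a form supported in $(L^{2})^{\vee}\otimes\Omega^{1}_{U}$ (the $\wtil\sigma$ piece) together with its forced images. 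Setting $\sigma:=\wtil\sigma_{0}$ (after the Poincaré correction), one has $d_{dR}\sigma-\omega\in\big((L^{2})^{\vee}\otimes\Omega^{\geq2}_{U}\big)[1]$, and that residual piece is itself $D$-exact, so absorbing it into a further adjustment of $\sigma$ within $(L^{2})^{\vee}\otimes\Omega^{1}_{U}$ (again by the Poincaré lemma) yields $d_{dR}\sigma=\omega$ with $\sigma\in(L^{2})^{\vee}\otimes\Omega^{1}_{U}$ as required.

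Having fixed $\sigma$, I would then produce $S$. Apply $q$ to $d_{dR}\sigma=\omega$ and use $[d_{dR},q]=0$ together with $(d_{dR}+q)\omega=0$ (hence $q\omega=-d_{dR}\omega$... more precisely, $q\omega$ is the degree-$0$ component, which must vanish) to get that $q\sigma$ is $d_{dR}$-closed and of the right degree to be $d_{dR}$ of a function. Since $\O^{\bullet}_{(L,U)}$ is a completed symmetric algebra over $\O_{U}$ with $U$ a polydisc, the algebraic deRham complex $(\Omega^{\bullet}_{(L,U)},d_{dR})$ is a resolution of the constants in each internal degree — the Poincaré lemma again — so a $d_{dR}$-closed $1$-form of internal degree $-1$ that vanishes under $q\circ d_{dR}$ is $d_{dR}$ of a degree $0$ function $S$; one then checks $qS=0$ using that $q$ and $d_{dR}$ commute and $d_{dR}$ is injective on the relevant piece (or solve $qS=0$ directly as the last of the three equations, noting it is automatic in the strictly positive case and otherwise follows from $d_{dR}(qS)=q(d_{dR}S)=q^{2}\sigma=0$ plus injectivity of $d_{dR}$ on functions modulo constants). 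This closes the argument.

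The main obstacle I expect is step (i) — arranging that $\sigma$ can be chosen in the ``correct'' summand $(\bL_{U}^{2})^{\vee}\otimes\Omega^{1}_{U}$, not merely that \emph{some} primitive $\theta$ exists. A generic primitive of $\omega$ will have a component in $d_{dR}(L^{2})^{\vee}\otimes\Omega_{U}$, and the point is that the cohomological non-degeneracy hypothesis on $\omega$ (which makes $\omega_{\mu}$ a quasi-isomorphism at every Maurer--Cartan point) is not directly what's needed here; rather one needs the bicomplex $\big(\sD,\iota_{d},D\big)$ to be well-enough behaved that the $\iota_{d}$-shift component of $\omega$ carries all the information, and this is where the holomorphic Poincaré lemma on the polydisc $U$ does the real work. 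Care is needed to run the Poincaré-lemma correction compatibly in the two directions ($D$ on the base and the $\sD$/$\iota_{d}$ interplay) so that the successive adjustments of $\sigma$ stay inside $(L^{2})^{\vee}\otimes\Omega^{1}_{U}$ and converge (the symmetric algebra is completed, so one should check the homotopy operators are continuous/bounded, which they are since they are built from the fibrewise integration operators $\int_{0}^{1}$ appearing already in Proposition~1.19).
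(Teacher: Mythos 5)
The paper states this proposition without proof (it is introduced with ``Although it may not be explicitly required for us to ever cite...''), so there is nothing to compare your argument against; I can only assess it on its own terms.

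Your overall strategy is sound and, I believe, the intended one: reduce to the three equations of \ref{BVeqns}, use the splitting $\Omega^{2,-1}_{(L,U)} = \big(d_{dR}(L^2)^\vee\otimes\Omega_U\big)[1]\oplus\big((L^2)^\vee\otimes\Omega^2_U\big)[1]$ and the isomorphism $\iota_d$ to extract $\sigma$ from the first component of $\omega$, and then use the holomorphic Poincar\'e lemma on the polydisc $U$ to integrate the $d_{dR}$-closed degree-$0$ one-form $q\sigma$ to a function $S$ (with $qS=0$ automatic for degree reasons in the strictly quasi-smooth case). The key computation is the one you state in passing: writing $\omega=\omega_1\oplus\omega_2$ and $\omega_1=\iota_d(\wtil\sigma_0)$, the component of $d_{dR}\omega=0$ lying in $d_{dR}(L^2)^\vee\otimes\Omega^2_U$ reads $D\omega_1\pm\iota_d\omega_2=0$, and since $D$ commutes with $\iota_d$ up to sign and $\iota_d$ is injective, this forces $D\wtil\sigma_0=\omega_2$ exactly (not ``plus lower-order terms''). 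Hence $\sigma:=\wtil\sigma_0$ already satisfies $d_{dR}\sigma=\iota_d\sigma+D\sigma=\omega$ on the nose.

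The one genuinely problematic element is your fallback mechanism of ``absorbing the residual piece into a further adjustment of $\sigma$ within $(L^2)^\vee\otimes\Omega^1_U$ by the Poincar\'e lemma.'' This cannot work even in principle: if $\tau\in(L^2)^\vee\otimes\Omega^1_U$ is any nonzero correction, then $d_{dR}(\sigma+\tau)$ acquires the extra term $\iota_d\tau$ in the summand $d_{dR}(L^2)^\vee\otimes\Omega^1_U$, and since $\iota_d$ is an isomorphism this destroys the equation $\iota_d\sigma=\omega_1$. In other words, the requirement $\sigma\in(\bL^2_U)^\vee\otimes\Omega^1_U$ together with $d_{dR}\sigma=\omega$ determines $\sigma$ \emph{uniquely} as $\iota_d^{-1}(\omega_1)$; there is no room to correct, and the proof must show (as the closedness computation does) that this unique candidate works. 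You should delete the correction step and promote the closedness computation from an aside to the main argument; with that repair, and with the Poincar\'e lemma reserved for producing $S$ from $q\sigma$ (where it is genuinely needed, since $d_{dR}(q\sigma)=\pm q(d_{dR}\sigma)=\pm q\omega=0$), the proof is complete.
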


	\subsubsection{BV $\cL_{\infty}$-algebras as ``homotopy critical loci"}
	
	Note that if $(L,U, S, \sigma)$ is a BV $\cL_{\infty}$-algebra, then the equation $\sigma \circ \lambda = dS$ implies that $\tMC(L,U) \subset \Crit(S)$. If $\sigma: \bL^{1}_{U} \rightarrow (\bL^{2}_{U})^{\vee}$ is an \emph{isomorphism} of bundles, then $\bL(U) \cong \dCrit(S)$ as dg analytic spaces -- however, in general, the equality $\tMC(L,U) \subsetneq \Crit(S)$ may be a strict inclusion.  \\

	As an illustrative example of what may happen, consider the following:
	\begin{ex}
		Suppose that $U \subset \CC^{n}$ is a polydisc centered at the origin, and $S:U \rightarrow \CC$ is an analytic function, with a critical point at the origin.\\
		Then, we define the \qs space $\dCrit(S) \times \CC^{m}$ through ``extending $\dCrit(S)$ by a contractible factor" -- that is, $\dCrit(S) \times \CC^{m} = (U \times \CC^{m}, \Omega_{U} \times \ul{\CC}^{m}, dS \oplus \Delta  )$, where $\Delta( x ) = (x,x)$ is the diagonal section of $\ul{\CC}^{m} = \CC^{m} \times \CC^{m}$.

		\begin{enumerate}
			\item{The pair $( \pi^{*}_{U}S, (i_{U} \pi_{U})^{*})$ defines homotopy BV data for $\dCrit(S) \times \CC^{m}$}
			\item{ We have a strict inclusion $\tau^{0} \big( \dCrit(S) \times \CC^{m} \big) \hookrightarrow \Crit( \pi^{*}_{U} S )$, so that $\tau^{0}(\dCrit(S) \times \CC^{m}) \neq \Crit( \pi^{*}_{U} S )$}
		\end{enumerate}
		
	\end{ex}

	\subsubsection*{BV data and minimal models}
	
	If $(L, U,S, \sigma)$ is a BV $\cL_{\infty}$-algebra in degrees $[1,2]$, then for any $\mu \in \tMC(L,U)$ the morphism $(\sigma_{\mu}^{\vee}, \sigma_{\mu})$ is a quasi-isomorphism in the diagram:
	
	\begin{equation} \label{inducedHCLdiagram1} 
		\begin{tikzcd}
			L^{1}_{\mu}  \arrow[r, "d_{\mu}"] \arrow[d, "\sigma_{\mu}^{\vee}"] & L^{2}_{\mu} \arrow[d, "\sigma_{\mu}" ]  \\ 
			(L^{2}_{\mu})^{\vee}           \arrow[r, "d_{\mu}^{\vee}" , swap] 	&     (L_{\mu}^{1})^{\vee} 
		\end{tikzcd}
	\end{equation}

	Therefore, we see that if $\bL(U)$ is minimal at $\mu$, so that $d_{\mu} = 0$, then the exact square \ref{inducedHCLdiagram1} implies that $\sigma_{\mu}$ is an isomorphism. This implies that $\sigma: \bL^{2}_{U} \rightarrow (\bL_{U}^{1})^{\vee}$ is an isomorphism when restricted to an open neighbourhood of $\mu$. \\

	We will see that in the situation relevant to us for Chern-Simons theory, $X$ may not be a critical locus -- \textbf{even locally}. It is possible that $X$ is locally a critical locus only after "dividing out" by gauge equivalences.\\
	
	If $L$ has a non-trivial $L^{0}$ term, then the condition $qS = 0$ is no longer vaccuous. This condition says that 
	
	\[
	qS = \delta_{0}^{\vee}(d_{dR}S) = 0 
	\]
	in other words, $d_{dR} S$ is annihilated by contraction with the anchor map $\delta_{0}: \bL_{U}^{0} \hookrightarrow \bL_{U}^{1}$ -- or equivalently, $S$ is gauge invariant.

	Suppose that $\big(L,U , S, \sigma, \Phi \big)$ is a BV $ \cL_{\infty}$-algebra, with a homotopy minimal atlas $\Phi$ for the $\cL_{\infty}$-algebra $\mbf{L}(U)$. Then, recall that this gives the following:

	\begin{enumerate}
		\item{For each $\mu \in X$, a local quasi-smooth minimal model $I_{\mu/\g}: \mbf{H}_{\mu}(U_{\mu/\g}^{\min})  \hookrightarrow  \mbf{L}_{\mu}(U_{\mu})$, so that $\tau^{0}(  \mbf{H}_{\mu}(U_{\mu/\g}^{\min})   ) = X_{\mu/\g}$ is a local slice for the homotopy $\gG$-action on $X = \tMC(L,U)$. }
		\item{A trivial polydisc bundle $\pi_{\mu/\g}: U_{\mu} \rightarrow U_{\mu/\g}^{\min}$ over the body of the minimal model $U_{\mu/\g}^{\min}$}
	\end{enumerate}
	
	Recall that we assume throughout this paper that our $\cL_{\infty}$-algebras are all quasi-smooth. Then, a minimal model $I_{\mu/\g}: \mbf{H}_{\mu}(U_{\mu/\g}^{\min}) \hookrightarrow \bL(U)$ for some $\mu \in \tMC(L,U)$ is so that $\mbf{H}_{\mu}(U_{\mu/\g}^{\min})$ is a \emph{strictly} quasi-smooth $\cL_{\infty}$-algebra in degrees $[1,2]$. Therefore,  considering the pullback BV data $I_{\mu/\g}^{*}(S, \sigma) =: (S_{\mu/\g}^{\min}, \sigma_{\mu/\g}^{\min} )$, we get that $\sigma_{\mu/\g}^{\min}: \mbf{H}_{U_{\mu/\g}^{\min}}^{2} \rightarrow  (\mbf{H}_{U_{\mu/\g}^{\min}}^{1})^{\vee}$ is an isomorphism after possibly shrinking $U_{\mu/\g}^{\min}$ further around the basepoint (given by the origin).

	To summarize the above discussion, we have:
	
	\begin{prop}
		Suppose that $\big( \mbf{L}(U), S, \sigma, \Phi \big)$ is a quasi-smooth BV $\cL_{\infty}$-algebra, with a homotopy minimal atlas $\Phi$. Then, for any $\mu \in X = \tMC(L,U)$, there exists an open neighbourhood $X_{\mu}$ of $X$, containing a slice $X_{\mu/\g}$ at $\mu$ of the gauge action on $X$, and a closed embedding $\iota_{\mu/\g}: X_{\mu/\g} \hookrightarrow U_{\mu/\g}^{\min}$, where $U_{\mu/\g}^{\min}$ is a polydisc with an analytic function $S_{\mu/\g}^{\min}: U_{\mu/\g}^{\min} \rightarrow \CC$, such that $\iota_{\mu/\g}( X_{\mu/\g} ) = \Crit(S_{\mu/\g}^{\min})$. 	 
	\end{prop}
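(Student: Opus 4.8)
Fix $\mu \in X = \tMC(L,U)$. The plan is to assemble the statement from the two facts established in the preceding discussion: that homotopy BV data pulls back along $L_{\infty}$ quasi-isomorphisms, and that on a \emph{minimal}, strictly quasi-smooth BV $\cL_{\infty}$-algebra the pairing $\sigma$ becomes an isomorphism of bundles near the basepoint. So the first step would be to invoke the homotopy minimal atlas $\Phi$ at $\mu$ to produce the local quasi-smooth minimal model $I_{\mu/\g}: \mbf{H}_{\mu}(U_{\mu/\g}^{\min}) \hookrightarrow \mbf{L}_{\mu}(U_{\mu})$, a quasi-isomorphism whose classical truncation realizes the desired slice $X_{\mu/\g} := \tau^{0}\big(\mbf{H}_{\mu}(U_{\mu/\g}^{\min})\big)$ of the homotopy $\gG$-action on $X$; one takes $X_{\mu} := \tau^{0}\big(\mbf{L}_{\mu}(U_{\mu})\big)$ as the open neighbourhood, into which $\tau^{0}I_{\mu/\g}$ embeds $X_{\mu/\g}$.

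Next I would observe that since $S$ and $\sigma$ are defined globally over $U$ and the BV conditions are imposed pointwise at each element of $\tMC$, restricting and twisting by $\mu$ leaves them untouched, so $\big(\mbf{L}_{\mu}(U_{\mu}), S, \sigma\big)$ is again a BV $\cL_{\infty}$-algebra; then the pullback lemma for homotopy BV data (whose hypothesis is exactly a quasi-isomorphism of the shape of $I_{\mu/\g}$) shows that $(S_{\mu/\g}^{\min}, \sigma_{\mu/\g}^{\min}) := I_{\mu/\g}^{*}(S,\sigma)$ is homotopy BV data for $\mbf{H}_{\mu}(U_{\mu/\g}^{\min})$. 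Because our $\cL_{\infty}$-algebras are quasi-smooth, $\mbf{H}_{\mu}(U_{\mu/\g}^{\min})$ is strictly quasi-smooth, concentrated in degrees $[1,2]$, and it is minimal at the origin, i.e. $d_{\0} = 0$; hence the exact square $\eqref{inducedHCLdiagram1}$ of the BV condition degenerates at $\0$ and forces $\sigma^{\min}_{\mu/\g}\big|_{\0}$ to be an isomorphism. Since the determinant of $\sigma^{\min}_{\mu/\g}$ is analytic and nonvanishing at the origin, after shrinking $U_{\mu/\g}^{\min}$ to a smaller polydisc I may assume $\sigma^{\min}_{\mu/\g}: \mbf{H}^{2}_{U_{\mu/\g}^{\min}} \iso (\mbf{H}^{1}_{U_{\mu/\g}^{\min}})^{\vee} = \Omega^{1}_{U_{\mu/\g}^{\min}}$ is an isomorphism of bundles over the whole chart.

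The last step is to read off the critical locus. The BV compatibility triangle for $(S^{\min}_{\mu/\g}, \sigma^{\min}_{\mu/\g})$ is $\sigma^{\min}_{\mu/\g}\circ \lambda_{MC}^{\min} = d_{dR}S^{\min}_{\mu/\g}$, with $\lambda_{MC}^{\min}$ the Maurer-Cartan section; invertibility of $\sigma^{\min}_{\mu/\g}$ gives $\lambda_{MC}^{\min} = (\sigma^{\min}_{\mu/\g})^{-1}\circ d_{dR}S^{\min}_{\mu/\g}$, so $Z(\lambda_{MC}^{\min}) = \Crit(S^{\min}_{\mu/\g})$ — equivalently $\mbf{H}_{\mu}(U_{\mu/\g}^{\min}) \cong \dCrit(S^{\min}_{\mu/\g})$ as dg analytic spaces. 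Taking classical truncations, $X_{\mu/\g} = \tau^{0}\big(\mbf{H}_{\mu}(U_{\mu/\g}^{\min})\big) = Z(\lambda_{MC}^{\min}) = \Crit(S^{\min}_{\mu/\g})$, and $\iota_{\mu/\g}$ is the closed embedding underlying $\tau^{0}$ of the minimal chart. The main obstacle is really bookkeeping rather than a new idea: one must be careful that the restriction/twisting step genuinely preserves the BV structure (the pointwise conditions are insensitive to recentering, but this deserves to be spelled out) and that the slice $X_{\mu/\g}$ delivered by the atlas coincides with $\tau^{0}$ of the chart sitting in the correct polydisc $U_{\mu/\g}^{\min}$ after shrinking; everything else is a direct citation of the lemmas already in place.
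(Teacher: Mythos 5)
Your proposal is correct and follows essentially the same route as the paper: the paper states this proposition as a summary of the immediately preceding discussion, which likewise extracts the minimal chart $I_{\mu/\g}$ from the atlas, pulls back $(S,\sigma)$ along that quasi-isomorphism, uses minimality ($d_{\0}=0$) in the exact square to force $\sigma_{\mu/\g}^{\min}$ to be an isomorphism after shrinking the polydisc, and then reads $\Crit(S_{\mu/\g}^{\min}) = Z(\lambda_{MC}^{\min}) = X_{\mu/\g}$ off the commuting triangle $\sigma\circ\lambda = d_{dR}S$. Your extra care about the twisting/restriction step preserving the BV conditions is a reasonable addition that the paper leaves implicit.
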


	For $\cL_{\infty}^{\Omega}$-algebras with BV data, we can extend the notion of gauge invariance of $(S,\sigma)$ to invariance under the extended gauge action. 
	
	\begin{defn}
		Suppose that $\big( L \mdl L_{\Omega} , U \big)$ is an $\cL_{\infty}^{\Omega}$-algebra. We say that $\big( L \mdl L_{\Omega} , U, S, \sigma \big)$ defines a BV $\cL_{\infty}^{\Omega}$-algebra if:

		\begin{enumerate}
			\item{$\big( L \mdl L_{\Omega} , U \big)$ is an $\cL_{\infty}^{\Omega}$-algebra, and $(L, U, S , \sigma )$ defines a BV $\cL_{\infty}$-algebra }
			\item{There exists a BV pair $(S_{\Omega}, \sigma_{\Omega})$ for $(L_{\Omega}, U_{\Omega})$ such that $\mbf{I}^{*}(S_{\Omega}, \sigma_{\Omega}) = (S,\sigma)$, and both $\exp(S_{\Omega})$ and $\sigma_{\Omega}$ are invariant under the (extended) gauge group $\gG = \gG_{\Omega}$.}
		\end{enumerate}
 
	\end{defn}

	In particular, for a quasi-smooth BV $\cL_{\infty}^{\Omega}$-algebra $(L \mdl L_{\Omega}, U , S, \sigma, \Phi)$, any $\mu \in \tMC(L,U)$ and $g \in \gG$, we have that the following diagram commutes:
	
	\[
	\begin{tikzcd}[sep = huge]
		H_{g \star \mu}^{2} \arrow[r, "\sigma^{\min}_{g \star \mu}"] & (H_{g \star \mu}^{1})^{\vee} \arrow[d, "\calD \Phi_{(g,\mu)}^{\min}"] \\
		H_{\mu}^{2}  \arrow[u, "( \calD \Phi_{(g,\mu)}^{\min})^{*}" ] \arrow[r, "\sigma^{\min}_{\mu} "] & (H_{\mu}^{1})^{\vee}  
	\end{tikzcd}
	\]

	\subsubsection{BV data and homotopy minimal models}
	Now, we will be interested in the following question: 
	
	\paragraph*{(*)} For a BV $\cL^{\Omega}_{\infty}$-algebra $\big( L \mdl L_{\Omega}, U, S, \sigma, \Phi \big)$ equipped with a homotopy minimal atlas $\Phi$, if $X = \tMC(L,U)$ can be covered by slices $X_{\mu/\g}$ of the gauge action on $X$, such that there exist various closed embeddings $\iota_{\mu/\g}(X_{\mu/\g}) = S_{\mu/\g}^{\min}$ with $\iota_{\mu/\g}(X_{\mu/\g}) = \Crit(S_{\mu/\g}^{\min})$ for some analytic $S_{\mu/\g}^{\min}: U_{\mu/\g}^{\min} \rightarrow \CC$, how are the minimal critical actions $S_{\mu/\g}^{\min}$ related to each other for different $\mu$?\\

	In the forthcoming sequel to this paper, we will show that if a BV $\cL^{\Omega}_{\infty}$-algebra is \emph{oriented}, then there is a certain perverse sheaf that glues together the locally defined perverse sheaves of vanishing cycles for each locally defined minimal critical action $S_{\mu/\g}^{\min}$.

	Suppose that $\big( L \mdl L_{\Omega}, U, S, \sigma, \Phi \big)$ is a BV $\cL^{\Omega}_{\infty}$-algebra, with a homotopy minimal atlas $\Phi$. 
	Then, for any choice of Morse-Thom splitting  $U_{\mu} \cong U_{\mu}^{(2)} \times U_{\mu}^{(\geq 3)}$ for $\mu \in \tMC(L,U)$, so that 
	\[
	S_{\mu} \sim S_{\mu}^{(2)} \boxplus S_{\mu}^{(\geq 3)}
	\] 
	Then, one must have that 
	
	\begin{enumerate}
		\item{ $U_{\mu/\g}^{\min} \subset U_{\mu}^{(\geq 3)}$}
		\item{ $U_{\mu}^{(2)} \subset N_{\mu}$   }
	\end{enumerate}

	Now, suppose that $\ups \preceq_{\Phi} \mu$ and $d_{\ups} \lneq d_{\mu}$, where recall that $d_{\mu} = \ed_{\mu} X^{\red} $. We define $d_{\ups/\mu} = d_{\mu} - d_{\ups}$. As $\ups \preceq_{\Phi} \mu$, we have an injective quasi-isomorphism of $L_{\infty}$-algebras, where $\ups_{\mu} = \Phi_{\ups, \mu}(\0)$:
	
	\[
	I_{\ups/\mu} : \bH_{\ups}(U_{\ups/\g}^{\min}) \hookrightarrow \bH_{\mu}^{\ups_{\mu}}(U_{\mu/\g}^{\ups}) 
	\]
	
	Then, we have that $d_{\ups/\mu} = \rk( d^{2}_{\ups_{\mu}} S_{\mu/\g}^{\min} )$, where $d^{2}_{\ups_{\mu}} S_{\mu/\g}^{\min} $ is the Hessian of $S_{\mu/\g}^{\min}$ at $\ups_{\mu}$. Let us write $S^{(2)}_{\ups/\mu} := d^{2}_{\ups_{\mu}} S_{\mu/\g}^{\min}$, and the $1$-form  $d_{dR} \qQ_{\ups/\mu} : N_{\ups/\mu} \iso N_{\ups/\mu}^{\vee}$ defines a linear contractible $L_{\infty}$-algebra $(N_{\ups/\mu} \oplus N_{\ups/\mu}^{\vee}, d_{dR} \qQ_{\ups/\mu} , 0 ,\cdots ) = \dCrit(\qQ_{\ups/\mu}) = \nN_{\ups/\mu}$. By \ref{Linftylift}, we obtain an $L_{\infty}$-isomorphism
	
	\begin{equation} \label{MorseThomSplit}
		\Phi_{\ups/\mu}: \bH_{\ups}(U_{\ups/\g}^{\min}) \times \dCrit(S^{(2)}_{\ups/\mu})  \longiso \bH_{\mu}^{\ups_{\mu}}(U_{\mu/\g}^{\ups})
	\end{equation}
	
	which is a minimal model decomposition $H_{\ups} \times \dCrit(S^{(2)}_{\ups/\mu}) \longiso H_{\mu}^{\ups_{\mu}}$. \\

	\begin{defn}
		Suppose that $\big( L \mdl L_{\Omega}, U, S, \sigma, \Phi \big)$ is a BV $\cL^{\Omega}_{\infty}$-algebra, with a homotopy minimal atlas $\Phi$. We say that $\Phi$ is $S$-adapted if for each $\mu \in \tMC(L,U)$, we have a choice of Morse-Thom splitting so that for each $\ups \preceq_{\Phi} \mu$, we have an isomorphism  $\Phi_{\ups/\mu}: \bH_{\ups}(U_{\ups/\g}^{\min}) \times \dCrit(S^{(2)}_{\ups/\mu})  \longiso \bH_{\mu}^{\ups_{\mu}}(U_{\mu/\g}^{\ups})$ as in \ref{MorseThomSplit}.
	\end{defn}

			Note that $U_{\mu} \cong U_{\mu}^{(2)} \times U_{\mu}^{(\geq 3)}$ is a linear splitting of polydiscs, so that $U_{\mu}^{(2)}$ comes with a standard choice of basis and a coordinate volume form $\dVol_{U_{\mu}^{(2)}}$. Furthermore, as $S_{\mu}^{(2)}: U_{\mu}^{(2)} \rightarrow \CC$ is non-degenerate, we have that $\Crit(S_{\mu/\g}^{\min}) \cong \Crit(S_{\mu}^{(2)} \boxplus S_{\mu/\g}^{\min})$. 
			
			\begin{defn}
				If one has a BV $\cL_{\infty}$-algebra  $(\bL(U), S, \sigma, \Phi)$ with an $S$-adapted homotopy minimal atlas $\Phi$,  we define:
				
				\[
				S_{\mu}^{(2)} \boxplus S_{\mu/\g}^{\min} = S_{\mu}^{\eff} :  U_{\mu}^{(2)} \times U_{\mu/\g}^{\min} \longrightarrow \CC
				\]
			\end{defn}
			
			By gauge-invariance of $S$ modulo $\ZZ$, if $\ups \preceq_{\Phi} \mu$, then we have an induced commutative diagram:
			
			\begin{equation} \label{gaugeeffective}
				\begin{tikzcd} [sep=large]
					U_{\ups}^{(2)} \times U_{\ups/\g}^{\min} \arrow[d] \arrow[r, "\Phi_{\ups,\ups_{\mu}}^{\eff}","\sim" swap, dotted] & 	U_{\ups_{\mu}}^{(2)} \times U_{\ups_{\mu}/\g}^{\min} \arrow[d]\\
					U_{\ups/\g}^{\min} \arrow[r, "\Phi_{\ups,\ups_{\mu} }^{\min}", "\sim" swap  ] & U_{\ups_{\mu}/\g}^{\min} 
				\end{tikzcd} 
			\end{equation}
			such that 
			
			\[
			d_{dR} (\Phi_{\ups,\ups_{\mu}}^{\eff})^{*}(S_{\ups_{\mu}}^{\eff} ) = d_{dR} S_{\ups}^{\eff}
			\]

			\section{Orientations and metric structures}

			In this section, we consider the notation of orientation for BV $\cL_{\infty}^{\Omega}$-algebras. 
			
			\begin{defn}
				Suppose that $(L \mdl L_{\Omega}, U, S, \sigma, \Phi)$ is a quasi-smooth BV $\cL_{\infty}^{\Omega}$-algebra with a gauge-fixing atlas $\Phi$. Then a metric structure on $(L \mdl L_{\Omega}, U, S, \sigma)$ is the data of:
				
				\begin{enumerate}
					\item{A $\gG$-invariant non-degenerate quadratic form $\qQ_{L}$ on $L^{1}$, such that $\qQ_{L} \resto_{ \tMC(L,U) } = 0$}
					\item{For each $\mu \in \tMC(L,U)$, $\qQ_{\mu} := \ii_{\mu}^{*} \qQ_{L} = \qQ_{L} \resto_{ H^{1}_{\mu}}$ is a non-degenerate quadratic form on $H^{1}_{\mu}$}
					\item{For each $\ups \preceq_{\Phi} \mu$, one has that 
						\[
						J_{\phi_{\ups/\mu}}^{*}( \qQ_{\mu} ) = \qQ_{\ups/\mu} \boxplus \qQ_{\ups}
						\]
						for some non-degenerate quadratic form $\qQ_{\ups/\mu}$ on $U_{\ups/\mu}^{(2)}$, where
						$\phi_{\ups/\mu}: U_{\ups/\mu}^{(2)} \times U_{\ups/\g}^{\min} \longiso    U_{\mu/\g}^{\ups}$  
					}
				\end{enumerate}
				
				Note that by $1.$, we have that for $\mu, \mu' \in \tMC(L,U)$ with $\mu \sim_{\gG} \mu'$ , then we have that the compatibility relation
				\[
				(J_{\phi_{\mu,\mu'}^{\min}})^{*}( \qQ_{\mu'} ) = \qQ_{\mu} 
				\]
			\end{defn}
	
			We say that the metric structure $\qQ_{L}$ is \emph{oriented} if we fix an orientation of $\det(\qQ_{L})$. Recall that the determinant of a non-degenerate quadratic form is only well-defined up to a square, so fixing an orientation is to fix a choice of square-root. As one can always choose $+ \sqrt{\det(\qQ_{L})}$  or $-\sqrt{\det(\qQ_{L})}$, which is equivalent to choosing a sign. \\
			Note that if we fix an orientation of $\det(\qQ_{L})$, then there are well-defined choices of square roots of $\det(\qQ_{\mu})$ and $\det(\qQ_{\ups/\mu})$ for any $\mu$ and pairs $\ups \preceq_{\Phi} \mu$ in $\tMC(L,U)$.

			\begin{defn}
				Suppose that  $(L \mdl L_{\Omega}, U, S, \sigma, \Phi)$ is a gauge-fixed BV $\cL_{\infty}^{\Omega}$-algebra with oriented metric structure $\qQ_{L}$. Then, for each minimal chart $U_{\mu/\g}^{\min} \hookrightarrow U_{\mu}$, we define the \emph{metric volume form} on $U_{\mu/\g}^{\min}$ to be
				
				\[
				\varOmega_{\mu}  := \sqrt{ \det( \qQ_{\mu} ) } \cdot \dVol_{U_{\mu/\g}^{\min}}
				\]	
			\end{defn}
			
			For each $\ups \preceq_{\Phi} \mu$, we define 
			
			\[
			\varOmega_{\ups/\mu} := \sqrt{  \det(\qQ_{\ups/\mu} ) } \cdot \dVol_{U_{\ups/\mu}^{(2)}} 
			\]

			\begin{lem}
				For each $\ups \preceq_{\Phi} \mu$, one has that: 
				\[
				\phi_{\ups/\mu}^{*}( \varOmega_{\mu} ) = \varOmega_{\ups/\mu} \wedge \varOmega_{\ups} 
				\]
			\end{lem}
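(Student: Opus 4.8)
The plan is to unwind the definitions and reduce everything to a determinant-multiplicativity statement. Recall that $\phi_{\ups/\mu}: U_{\ups/\mu}^{(2)} \times U_{\ups/\g}^{\min} \longiso U_{\mu/\g}^{\ups}$ is the linear isomorphism of polydiscs underlying the Morse-Thom splitting $\Phi_{\ups/\mu}$ of \ref{MorseThomSplit}, restricted to bodies. By definition, $\varOmega_{\mu} = \sqrt{\det(\qQ_{\mu})}\cdot \dVol_{U_{\mu/\g}^{\min}}$, and similarly $\varOmega_{\ups/\mu} = \sqrt{\det(\qQ_{\ups/\mu})}\cdot \dVol_{U_{\ups/\mu}^{(2)}}$ and $\varOmega_{\ups} = \sqrt{\det(\qQ_{\ups})}\cdot \dVol_{U_{\ups/\g}^{\min}}$. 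So I would first compute $\phi_{\ups/\mu}^{*}(\dVol_{U_{\mu/\g}^{\min}})$: since $\phi_{\ups/\mu}$ is a linear isomorphism of the ambient vector spaces, its pullback on the top-degree coordinate volume form is multiplication by the Jacobian determinant $J_{\phi_{\ups/\mu}}$, i.e. $\phi_{\ups/\mu}^{*}(\dVol_{U_{\mu/\g}^{\min}}) = \det(J_{\phi_{\ups/\mu}})\cdot \dVol_{U_{\ups/\mu}^{(2)}} \wedge \dVol_{U_{\ups/\g}^{\min}}$, using that $U_{\ups/\mu}^{(2)}$ and $U_{\ups/\g}^{\min}$ each carry standard coordinate bases so their coordinate volume forms wedge together to the coordinate volume form on the product.

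Next I would invoke condition $3.$ of the metric structure definition, namely $J_{\phi_{\ups/\mu}}^{*}(\qQ_{\mu}) = \qQ_{\ups/\mu} \boxplus \qQ_{\ups}$. Taking determinants of this equality of quadratic forms: the pullback of a quadratic form along a linear map $A$ has determinant $\det(A)^2 \det(\qQ_{\mu})$, while the determinant of a direct sum is the product $\det(\qQ_{\ups/\mu})\det(\qQ_{\ups})$. Hence $\det(J_{\phi_{\ups/\mu}})^{2}\,\det(\qQ_{\mu}) = \det(\qQ_{\ups/\mu})\,\det(\qQ_{\ups})$. Taking the square root compatible with the fixed orientation of $\det(\qQ_L)$ (so that, as noted after the orientation definition, $\sqrt{\det(\qQ_{\mu})}$, $\sqrt{\det(\qQ_{\ups/\mu})}$, $\sqrt{\det(\qQ_{\ups})}$ all have canonical meaning and the signs are coherent) yields $\det(J_{\phi_{\ups/\mu}})\cdot \sqrt{\det(\qQ_{\mu})} = \sqrt{\det(\qQ_{\ups/\mu})}\cdot\sqrt{\det(\qQ_{\ups})}$, as functions pulled back appropriately to $U_{\ups/\mu}^{(2)}\times U_{\ups/\g}^{\min}$.

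Finally I would assemble: $\phi_{\ups/\mu}^{*}(\varOmega_{\mu}) = \phi_{\ups/\mu}^{*}\big(\sqrt{\det(\qQ_{\mu})}\big)\cdot\phi_{\ups/\mu}^{*}(\dVol_{U_{\mu/\g}^{\min}})$; substituting the two computations above gives $\phi_{\ups/\mu}^{*}\big(\sqrt{\det(\qQ_{\mu})}\big)\cdot \det(J_{\phi_{\ups/\mu}})\cdot\dVol_{U_{\ups/\mu}^{(2)}}\wedge\dVol_{U_{\ups/\g}^{\min}} = \sqrt{\det(\qQ_{\ups/\mu})}\,\sqrt{\det(\qQ_{\ups})}\cdot\dVol_{U_{\ups/\mu}^{(2)}}\wedge\dVol_{U_{\ups/\g}^{\min}} = \big(\sqrt{\det(\qQ_{\ups/\mu})}\dVol_{U_{\ups/\mu}^{(2)}}\big)\wedge\big(\sqrt{\det(\qQ_{\ups})}\dVol_{U_{\ups/\g}^{\min}}\big) = \varOmega_{\ups/\mu}\wedge\varOmega_{\ups}$. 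The one genuine subtlety — the ``hard part'' — is the sign/orientation bookkeeping: one must check that the square roots of $\det(\qQ_{\mu})$, $\det(\qQ_{\ups/\mu})$, $\det(\qQ_{\ups})$ induced from the single chosen orientation of $\det(\qQ_L)$ are mutually compatible under the splitting $J_{\phi_{\ups/\mu}}^{*}(\qQ_{\mu}) = \qQ_{\ups/\mu}\boxplus\qQ_{\ups}$, and that $\det(J_{\phi_{\ups/\mu}})$ carries the correct sign so the equality holds on the nose rather than up to $\pm 1$; this follows because $\qQ_L$ is $\gG$-invariant and the orientation of $\det(\qQ_L)$ restricts compatibly along $\ii_\mu^*$, but it deserves to be spelled out carefully. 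Everything else is the routine linear-algebra identities $\det(A^{*}\qQ) = \det(A)^2\det(\qQ)$ and $\det(\qQ'\boxplus\qQ'') = \det(\qQ')\det(\qQ'')$ together with naturality of coordinate volume forms under linear maps.
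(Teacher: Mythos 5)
Your proposal is correct and follows essentially the same route as the paper: pull back the coordinate volume form to pick up the Jacobian factor, take determinants of the identity $J_{\phi_{\ups/\mu}}^{*}(\qQ_{\mu}) = \qQ_{\ups/\mu} \boxplus \qQ_{\ups}$ to solve for $\det(J_{\phi_{\ups/\mu}})$ as a ratio of square roots, and substitute. Your added attention to the sign coherence of the square roots is a reasonable refinement of a point the paper's proof leaves implicit, but the substance of the argument is identical.
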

			
			\begin{proof}
				Indeed, by the property that $J^{*}_{\phi_{\ups/\mu} } \qQ_{\mu} = \qQ_{\ups/\mu} \boxplus \qQ_{\ups}$, we have that $\det(J^{*}_{\phi_{\ups/\mu} }\qQ_{\mu} ) = \det(J_{\phi_{\ups/\mu} } )^{2} \det( \qQ_{\mu} ) =  \det( \qQ_{\ups/\mu}  ) \cdot \det( \qQ_{\ups} )$. From this, we see that
				
				\[
				\det(J_{\phi_{\ups/\mu}} ) =  \frac{ \sqrt{  \det(\qQ_{\ups/\mu} ) } \cdot \sqrt{ \det( \qQ_{\ups } ) }  } { \sqrt{\det( \qQ_{\mu} )}  }
				\]
				
				Therefore, 
				
				\begin{align*}
					& \phi_{\ups/\mu}^{*}( \varOmega_{\mu} )  = \phi_{\ups/\mu}^{*}\big(   \sqrt{ \det( \qQ_{\mu} ) } \cdot \dVol_{U_{\mu/\g}^{\min}}    \big) \\  
					& =  \sqrt{ \det( \qQ_{\mu} ) } \cdot \frac{ \sqrt{  \det(\qQ_{\ups/\mu} ) } \cdot \sqrt{ \det( \qQ_{\ups } ) }  } { \sqrt{\det( \qQ_{\mu} )}  }   \dVol_{U_{\ups/\mu}^{(2)} } \wedge \dVol_{U_{\ups/\g}^{\min} } \\
					& = \varOmega_{\ups/\mu} \wedge \varOmega_{\ups} 
				\end{align*}
			\end{proof}

			\begin{defn} (Quantum BV $\cL_{\infty}^{\Omega}$-algebras)\\
				We say that $(L \mdl L_{\Omega}, U, S, \sigma, \Phi, \qQ_{L})$  is an oriented metric BV $\cL_{\infty}^{\Omega}$-algebra, or a {\bfseries quantum BV $\cL_{\infty}^{\Omega}$-algebra}, if $(L \mdl L_{\Omega}, U, S, \sigma, \Phi)$ is a gauge-fixed BV $\cL_{\infty}^{\Omega}$-algebra, with a choice of oriented metric structure $\qQ_{L}$. As we will only use this stronger form of orientation (i.e. with an oriented metric structure) in this paper, we may simply say that an oriented metric BV $\cL_{\infty}^{\Omega}$-algebra is an \emph{oriented} BV $\cL_{\infty}^{\Omega}$-algebra. \\
				
				We will refer to the pair $(\Phi, \qQ_{L})$ as the \emph{orientation data} associated to the oriented BV $\cL_{\infty}^{\Omega}$-algebra  $(L \mdl L_{\Omega}, U, S, \sigma, \Phi, \qQ_{L})$.
			\end{defn}

			\subsection{Relation to other forms of orientation}
			
			\begin{defn}
				Suppose that $(L,U)$ (resp. $(L \mdl L_{\Omega}, U)$) is an $\cL_{\infty}$-algebra (resp. $\cL_{\infty}^{\Omega}$-algebra), then we define the $\cL_{\infty}$-orientation line bundle over $X = \tMC(L,U)$  to be given by 
				
				\[
				\det(L,U) := \det(\bL_{U}^{\bullet}) \resto_{X} 
				\]
				So that for each $\mu \in \tMC(L,U)$, we have that $\det(L,U)_{\mu} = \det( [L^{0}_{\mu} \rightarrow L^{1}_{\mu} \rightarrow \cdots ]  ) = \bigotimes  \det(L^{i}_{\mu})^{(-1)^{i}}$. In the case that $(L \mdl L_{\Omega}, U)$ is a $\cL_{\infty}^{\Omega}$-algebra, $\det(L,U)$ is $\gG$-invariant and descends to a line bundle $\det_{\gG}(L,U)$ over $X \git \gG$. \\
				
				We say that $(L,U)$ is orientable (resp. $(L \mdl L_{\Omega}, U)$ is orientable), if there exists a global section of the line bundle $\det(L,U)$ (resp. $\det_{\gG}(L,U)$).
			\end{defn}
			
			\begin{defn} \label{BVorient}
				If $(L \mdl L_{\Omega}, U, S, \sigma)$ is a BV $\cL_{\infty}^{\Omega}$-algebra, then we say that $(L \mdl L_{\Omega}, U, S, \sigma)$ is BV orientable if there exists a square root line bundle $\det_{\gG}(L,U)^{1/2}$ over $X \git \gG$, of the orientation bundle $\det_{\gG}(L,U)$, and a global section of $\det_{\gG}(L,U)^{1/2}$. 
			\end{defn}
			
			\begin{lem}
				If  $(L \mdl L_{\Omega}, U, S, \sigma, \Phi, \qQ_{L})$  is an orientable metric BV $\cL_{\infty}^{\Omega}$-algebra, then it is BV orientable in the sense of \ref{BVorient}
			\end{lem}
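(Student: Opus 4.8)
The plan is to build the square root line bundle and its global section chart-by-chart from the $S$-adapted, $\gG$-adapted homotopy minimal atlas $\Phi$, using the metric volume forms $\varOmega_{\mu}$ as the local trivialising sections and gluing them via the compatibility relations already recorded. First I would note that, since $(L \mdl L_{\Omega}, U, S, \sigma, \Phi, \qQ_{L})$ is quasi-smooth and BV, each minimal slice chart $\bH_{\mu}(U_{\mu/\g}^{\min}) \hookrightarrow \bL(U)$ is strictly quasi-smooth in degrees $[1,2]$ with $\sigma_{\mu}^{\min}$ an isomorphism after shrinking, so $\bH_{\mu}(U_{\mu/\g}^{\min}) \cong \dCrit(S_{\mu/\g}^{\min})$, and hence over the slice $X_{\mu/\g} = \tau^{0}\bH_{\mu}(U_{\mu/\g}^{\min})$ there is a canonical identification
\[
\det{}_{\gG}(L,U)\big|_{X_{\mu/\g}} \;\cong\; \big(K_{U_{\mu/\g}^{\min}}\big)^{\otimes 2}\big|_{X_{\mu/\g}},
\]
where $K_{U_{\mu/\g}^{\min}} = \det\Omega^{1}_{U_{\mu/\g}^{\min}}$ (this is just the computation $\det([L^{1}\to L^{2}]) = \det(T)^{-1}\otimes\det(\Omega^{1}) = (K_{U_{\mu/\g}^{\min}})^{\otimes 2}$ applied to the critical chart). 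Thus each $K_{U_{\mu/\g}^{\min}}\big|_{X_{\mu/\g}}$ is a local square root of $\det_{\gG}(L,U)$, trivialised by the coordinate volume form $\dVol_{U_{\mu/\g}^{\min}}$, and the metric volume form $\varOmega_{\mu} = \sqrt{\det\qQ_{\mu}}\cdot\dVol_{U_{\mu/\g}^{\min}}$ is a second nowhere-vanishing section of it (nonvanishing because $\qQ_{\mu}$ is non-degenerate and an orientation of $\det\qQ_{L}$ makes $\sqrt{\det\qQ_{\mu}}$ a well-defined nonzero scalar).

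Next I would check that the family $\{(K_{U_{\mu/\g}^{\min}}\big|_{X_{\mu/\g}},\varOmega_{\mu})\}$ glues. Within a stratum $X_{(d)}$, and for gauge transitions $\mu \sim_{\gG} \mu'$, the relevant transition is $\phi_{\mu,\mu'}^{\min}$, and from $(J_{\phi_{\mu,\mu'}^{\min}})^{*}\qQ_{\mu'} = \qQ_{\mu}$ together with $(\phi_{\mu,\mu'}^{\min})^{*}\dVol_{U_{\mu'/\g}^{\min}} = \det(J_{\phi_{\mu,\mu'}^{\min}})\,\dVol_{U_{\mu/\g}^{\min}}$ one gets $\det(J_{\phi_{\mu,\mu'}^{\min}})^{2} = \det\qQ_{\mu}/\det\qQ_{\mu'}$, so that (with the sign fixed by the orientation) $(\phi_{\mu,\mu'}^{\min})^{*}\varOmega_{\mu'} = \varOmega_{\mu}$; over each stratum and each $\gG$-orbit the square roots patch with $\varOmega$ as a distinguished section. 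Across strata, for $\ups \preceq_{\Phi} \mu$ the Morse--Thom splitting \ref{MorseThomSplit} gives $U_{\mu/\g}^{\ups} \cong U_{\ups/\mu}^{(2)}\times U_{\ups/\g}^{\min}$, hence $K_{U_{\mu/\g}^{\ups}} \cong K_{U_{\ups/\mu}^{(2)}}\boxtimes K_{U_{\ups/\g}^{\min}}$, and the Lemma preceding this statement, $\phi_{\ups/\mu}^{*}(\varOmega_{\mu}) = \varOmega_{\ups/\mu}\wedge\varOmega_{\ups}$, shows that contraction against the nowhere-zero volume form $\varOmega_{\ups/\mu}$ on the finite-dimensional quadratic factor identifies $K_{U_{\mu/\g}^{\ups}}$ near $\ups_{\mu}$ with $K_{U_{\ups/\g}^{\min}}$ near $\ups$ and carries $\varOmega_{\mu}$ to $\varOmega_{\ups}$. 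Squaring these isomorphisms recovers exactly the canonical identifications of $\det_{\gG}(L,U)$ coming from $\phi_{\mu,\mu'}^{\min}$, from $\Phi_{\ups/\mu}$, and (using the $\gG$-equivariance square for $\calD\Phi_{(g,\mu)}^{\min}$ and the $\gG$-invariance of $\qQ_{L}$) from the gauge maps, so the whole construction descends to $X\git\gG$. Since the slices $X_{\mu/\g}$ cover $X$ and their images cover $X\git\gG$, this produces a line bundle $\det_{\gG}(L,U)^{1/2}$ with $\big(\det_{\gG}(L,U)^{1/2}\big)^{\otimes 2}\cong\det_{\gG}(L,U)$ and a global section obtained by gluing the $\varOmega_{\mu}$ --- which is precisely BV orientability in the sense of \ref{BVorient}.

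The hardest part will be verifying the cocycle condition for these transition isomorphisms on triple overlaps, i.e.\ that the gluings are mutually consistent when three slice charts meet, possibly across three different strata. This needs two inputs: (i) compatibility of the minimal-model and Morse--Thom splittings across base points, which is exactly the content of $S$-adaptedness and of diagrams \ref{compatibility} and \ref{gaugeeffective} (so that $\Phi^{\min}_{\ups,\ups'} = \wtil{\Phi}_{\ups'_{\nu}}\circ\Phi_{\ups_{\mu},\ups'_{\nu}}\circ\Phi_{\ups_{\mu}}$ and its gauge-equivariant versions hold strictly on minimal models), and (ii) a transitivity statement $\varOmega_{\xi/\mu} = \varOmega_{\xi/\ups}\wedge\varOmega_{\ups/\mu}$ for chains $\xi\preceq_{\Phi}\ups\preceq_{\Phi}\mu$, obtained by iterating the metric-structure identity $J^{*}_{\phi}\qQ = \qQ_{(2)}\boxplus\qQ$ and multiplicativity of $\det$. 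Granting (i) and (ii), the cocycle identities reduce to multiplicativity of $\det$ along the already-established compositions, so the genuinely delicate point is the index bookkeeping (the interplay of $\ups_{\mu}$, $\ups'_{\nu}$ and the gauge elements $g$); the geometric content is entirely carried by the metric volume forms and the lemma preceding the statement.
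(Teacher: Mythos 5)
The paper states this lemma without proof (it is the final statement before the bibliography), so there is no argument of the author's to compare yours against; I can only assess your construction on its own terms. Your route is the natural one and, in outline, correct: it is the analogue for gauge-fixed BV $\cL_{\infty}^{\Omega}$-algebras of the Joyce-style construction of $K_{S}^{1/2}$ on a d-critical locus, taking $K_{U_{\mu/\g}^{\min}}\big|_{X_{\mu/\g}}$ as the local square root of $\det_{\gG}(L,U)$ (via $\det(H^{1})^{-1}\otimes\det(H^{2})\cong\det(H^{1})^{-2}$ through $\sigma_{\mu}^{\min}$, i.e.\ through the identification of the slice chart with $\dCrit(S_{\mu/\g}^{\min})$), and $\varOmega_{\mu}=\sqrt{\det\qQ_{\mu}}\cdot\dVol_{U_{\mu/\g}^{\min}}$ as the distinguished local section. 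The within-stratum and gauge-orbit gluing via $(J_{\phi_{\mu,\mu'}^{\min}})^{*}\qQ_{\mu'}=\qQ_{\mu}$, and the cross-stratum gluing via the lemma $\phi_{\ups/\mu}^{*}(\varOmega_{\mu})=\varOmega_{\ups/\mu}\wedge\varOmega_{\ups}$, are exactly the right inputs, and your check that the square of the contraction-by-$\varOmega_{\ups/\mu}$ identification recovers the canonical determinant isomorphism of the acyclic factor $\dCrit(\qQ_{\ups/\mu})$ is the correct consistency test.

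The genuine gap is the one you flag but do not close: the cocycle condition on triple overlaps, and specifically your input (ii), the transitivity $\varOmega_{\xi/\mu}=\varOmega_{\xi/\ups}\wedge\varOmega_{\ups/\mu}$ for chains $\xi\preceq_{\Phi}\ups\preceq_{\Phi}\mu$. This does not follow formally from the paper's metric-structure axiom, which only constrains a single pair $\ups\preceq_{\Phi}\mu$ via $J_{\phi_{\ups/\mu}}^{*}(\qQ_{\mu})=\qQ_{\ups/\mu}\boxplus\qQ_{\ups}$: iterating gives $\qQ_{\ups/\mu}\boxplus\qQ_{\xi/\ups}\boxplus\qQ_{\xi}$ pulled back along $\phi_{\ups/\mu}\circ(\mathrm{id}\times\phi_{\xi/\ups})$, and one must compare this with $\qQ_{\xi/\mu}\boxplus\qQ_{\xi}$ pulled back along $\phi_{\xi/\mu}$; these agree only if the two composites of Morse--Thom splittings differ by an isomorphism respecting the product decomposition, which is plausible given $S$-adaptedness and diagram (5.12)/(4.5) but is nowhere established. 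Without this, the $\varOmega_{\mu}$ define consistent sections only on double overlaps, which fixes the transition signs but does not yet yield a line bundle. So the proposal is the right strategy and identifies its own missing step accurately; to be a complete proof it needs either (ii) added to the definition of a metric structure (or of an $S$-adapted atlas) as a coherence axiom, or a derivation of it from the uniqueness-up-to-equivalence clause of the Morse--Thom splitting lemma together with the strictness of homotopies between morphisms of minimal $L_{\infty}$-algebras noted at the end of Section 4.
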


\bibliographystyle{plain}
\bibliography{quantumBVLinfinity}

\end{document}